\newtheorem{theorem}{Theorem}[section]
\newtheorem*{theorem*}{Theorem}
\newtheorem{corollary}[theorem]{Corollary}
\newtheorem{lemma}[theorem]{Lemma}
\newtheorem{proposition}[theorem]{Proposition}
\newtheorem*{Additivity Theorem}{Theorem \ref{Main Theorem}}
\newtheorem*{MSS}{Theorem \ref{MSS generalization}}
\newtheorem*{Morimotogen}{Theorem \ref{Morimoto gen}}
\theoremstyle{definition}
\newtheorem{definition}[theorem]{Definition}
\newtheorem{example}[theorem]{Example}
\newtheorem{remark}[theorem]{Remark}
\newcommand{\N}{\mathbb{N}}
\newcommand{\Z}{\mathbb{Z}}
\newcommand{\R}{\mathbb{R}}
\newcommand{\M}{\mathbb{M}}
\newcommand{\co}{\mskip0.5 mu\colon\thinspace} 
\newcommand{\nil}{\varnothing}
\newcommand{\up}{\uparrow}
\newcommand{\down}{\downarrow}
\newcommand{\wihat}[1]{\widehat{#1}}
\newcommand{\defn}[1]{\textbf{#1}}
\renewcommand{\H}{\mathbb{H}}
\newcommand{\more}{\to}
\renewcommand{\S}{\mathbb{S}}
\newcommand{\netchi}{\operatorname{net}\chi} 
\newcommand{\net}{\operatorname{net}}
\newcommand{\netextent}{\operatorname{netext}}
\newcommand{\width}{\operatorname{w}}
\newcommand{\extent}{\operatorname{ext}}
\newcommand{\cpt}{\sqsubset}
\newcommand{\boundary}{\partial}
\newcommand{\bdd}{\boundary}
\newcommand{\mc}[1]{\mathcal{#1}}
\begin{document}

\title{Additive invariants for knots, links and graphs in 3-manifolds}
   \author{Scott A. Taylor, Maggy Tomova}
   
   \begin{abstract}
 We define two new families of invariants for (3-manifold, graph) pairs which detect the unknot and are additive under connected sum of pairs and (-1/2) additive under trivalent vertex sum of pairs. The first of these families is closely related to both bridge number and tunnel number. The second of these families is a variation and generalization of Gabai's width for knots in the 3-sphere. We give applications to the tunnel number and higher genus bridge number of connected sums of knots.
 \end{abstract}
 \maketitle  
 
 \section{Introduction}

Two of the most basic questions concerning any knot invariant are: ``Does it detect the unknot?'' and ``Is it additive under connected sum?'' Among the classical topologically-defined invariants, Seifert genus and bridge number are both well-known for their ``yes'' answers to both questions. Other invariants such as tunnel number and Gabai width, although they both detect the unknot, have more complicated stories when it comes to additivity. In this paper, we define, for almost any graph in almost any 3-manifold, two new families of invariants which both detect the unknot in $S^3$ and are additive under connected sum. For graphs, they also satisfy a certain type of additivity under trivalent vertex sum. In this introduction, we give a brief overview of the definition of the invariants (leaving the technical details until later in the paper), state our main results, and discuss the connection between our invariants and the classical invariants of bridge number, tunnel number, and Gabai width. This work relies on our previous paper \cite{PartI}. A thorough overview of all results we will be needing is provided in Section \ref{sec:prel}.

\subsection{Background}

Knot invariants (and 3--manifold invariants) are a fundamental tool used to distinguish knots (respectively, 3--manifolds) and organize them into useful families. One of the first knot invariants to be introduced was \defn{bridge number} \cite{Schubert}; it is the minimal number of maxima in a diagram of a knot. Bridge number is a beautiful invariant and is connected to a number of important concepts in knot theory (see, for example, \cite{Milnor, Yokota}.) Two of bridge number's particularly useful properties are that it detects the unknot and that it is additive under connected sum\footnote{Technically, it is the quantity that is one less than bridge number which is additive under connected sum.} \cite{Schubert, Schultens}. A \defn{bridge sphere} for a knot $K \subset S^3$ is a sphere transverse to $K$ separating the maxima of $K$ from the minima of $K$ (where the maxima and minima are defined using a height function $h\co S^3 \to K$). A ``higher genus'' version of bridge number was defined by Doll  and is applicable to knots in any compact, orientable 3--manifold. Higher genus bridge number neither detects the unknot nor is additive under connected sum \cite{Doll}.

Heegaard genus is a 3--manifold invariant which is analogous to bridge number. A \defn{Heegaard surface} for a closed, orientable 3--manifold $M$ is a surface separating the 3--manifold into two handlebodies. If we take the double branched cover of a knot $K \subset S^3$, a bridge sphere for $K$ lifts to a Heegaard surface for the resulting 3--manfiold. We can define Heegaard surfaces for 3--manifolds with boundary by considering compressionbodies in place of handlebodies (see \cite{Scharlemann-Survey}; this is also explained more below.) The \defn{Heegaard genus} of a 3--manifold (possibly with boundary) $M$, denoted $g(M)$, is the smallest possible genus of a Heegaard surface in $M$. A closed, orientable 3--manifold $M$ has $g(M) = 0$ if and only if $M = S^3$. Like bridge number, Heegaard genus is additive under connected sum \cite{Jaco}. There is a vast literature on Heegaard surfaces and their usefulness is now well-established.

Tunnel number is a somewhat more recent knot invariant defined by Clark in 1980 \cite{Clark}. The \defn{tunnel number} $t(K)$ of a knot $K \subset S^3$ equals the minimal number of arcs which need to be added to the knot so that the exterior is a handlebody. While tunnel number may at first appear unnatural, it is closely connected to Heegaard genus. Indeed,  $t(K)$ is one less than the Heegaard genus of the exterior of $K$. Largely because of this connection, tunnel number has been extensively studied. Unlike bridge number and Heegaard genus, however, tunnel number behaves erratically under connect sum (see, for example, \cite{Morimoto2, KR1, MSY, MS}). In this paper we seek to correct this by defining a knot invariant (called ``net extent'') which is intimately related to Heegaard genus, bridge number,  higher genus bridge number and tunnel number. Unlike the latter two invariants, however, it both detects the unknot and is additive under connect sum. 

The width of a knot was originally defined by Gabai \cite{G3} as a tool to prove property R. Various other applications of width quickly emerged \cite{GL, Thompson-recog}.  Because of its utility it began to be studied it its own right, the main question being its additivity under connect sum\footnote{Technically, it is the quantity that is two less than Gabai width which was thought to possibly be additive under connected sum. Gabai width does detect the unknot. }  \cite{RS, SchTh-width, BT1}. Eventually, Blair and Tomova \cite{BT} proved that there exist families of knots for which Gabai width is not additive. We introduce a second invariant, also called width, applicable to most any knot in many 3--manifolds. It is a slight variation of Gabai width when restricted to spheres transverse to knots in $S^3$ but \emph{is} additive under connect sum. Section \ref{comparison} below considers the relationship between our width and Gabai width. 

\subsection{The invariants}
For our purposes, a (3-manifold, graph) pair $(M,T)$ consists of a compact orientable 3-manifold $M$ (possibly with boundary) and a properly embedded graph $T \subset M$. 
 
\emph{Running Assumption:} $T$ has no vertices of valence 2 and no component of $\boundary M$ is a sphere intersecting $T$ in precisely two or fewer points.

If $X$ is a topological space (typically a manifold or CW-complex), we let $|X|$ denote the number of connected components of $X$ and write $Y \cpt X$ to mean that $Y$ is a connected component of $X$. If $S \subset$ M is a properly embedded surface, we write $S \subset (M, T )$ to mean that $S$ is transverse to $T$. The notation $(M,T)\setminus S$ refers to the (3--manifold, graph) pair resulting from removing an open regular neighborhhod of $S$ from both $M$ and $T$. We say that $T$ is \defn{irreducible}, if there is no sphere in $M$ intersecting $T$ exactly once. We say that $(M,T)$ is \defn{irreducible} if $T$ is irreducible and if every (tame) sphere in $M\setminus T$ bounds a 3-ball in $M \setminus T$.  A \defn{lens space} is any compact, connected, orientable 3-manifold $M$ such that $M \neq S^1 \times S^2$ and $g(M) = 1$.

Some of the ideas in this paper can be traced to the work of Scharlemann and Thompson on generalized Heegaard splittings \cite{ST-thin}. In a Heegaard splitting, the manifold is split by a single Heegaard surface into two handlebodies (or compressionbodies if the manifold has boundary). In a generalized Heegaard splitting, the manifold is split via two sets of surfaces, thin and thick, into a number of compressionbodies so that the thin surfaces are negative boundaries of the compressionbodies and the thick surfaces are positive boundaries. At first this may seem like an unnecessary complication but it turns out that one can also obtain such a decomposition where all the thin surfaces are incompressible in the manifold and all the thick surfaces are strongly irreducible (i.e., a pair of compressing disks on opposite sides of a thick surface always intersect). This is the fundamental idea behind our work in \cite{PartI} -- we give a decomposition of $(M, T)$ into multiple compressionbodies that intersect $T$ in certain elementary ways. We now make this somewhat more precise. 

As described in \cite{PartI}, a \emph{multiple v.p.-bridge surface} is a closed, usually not connected, orientable surface $\mc{H} \subset (M,T)$ such that $(M,T)\setminus \mc{H}$ is the union of simple-to-understand pieces called \emph{v.p.-compressionbodies}. These multiple v.p.-bridge surfaces are generalizations of bridge surfaces for knots in 3-manifolds, Heegaard surfaces for knot exteriors, and the surfaces arising in Scharlemann-Thompson generalized Heegaard splittings of 3-manifolds. The initials ``v.p.'' stand for ``vertex-punctured'' and indicate that vertices in $T$ are treated in a similar way to boundary components of $M$. We will elaborate on these ideas in Section \ref{sec:prel}.

The components of $\mc{H}$ are partitioned into two sets: the thick surfaces and the thin surfaces. The union of the thick surfaces is denoted $\mc{H}^+$ and the union of the thin surfaces is denoted $\mc{H}^-$ (so $H \cpt \mc{H}^+$ means that $H$ is a thick surface and $F \cpt \mc{H}^-$ means that $F$ is a thin surface.) We will consider multiple v.p.-bridge surfaces $\mc{H}$ which are \emph{reduced} and \emph{oriented}. Roughly speaking, $\mc{H}$ is oriented if the components of $\mc{H}$ are given coherent transverse orientations such that there are no oriented closed loops always intersecting $\mc{H}$ in the same direction and $\mc{H}$ is reduced if there is no ``obvious'' way of simplifying it. See Section \ref{sec:prel} for precise definitions. We let $\H(M,T)$ denote the set of reduced, oriented multiple v.p.-bridge surfaces for $(M,T)$.

If $(M, T)$ is a (3-manifold, graph)-pair and if $S \subset (M,T)$ is a surface, we define the \defn{extent} of $S$ to be
\[
\extent(S) = \frac{|S \cap T| - \chi(S)}{2}.
\] 
If $S$ is connected, this is simply $g(S) - 1 + |S \cap T|/2$ where $g(S)$ is the genus of $S$. Two cases are of particular interest: if $S$ is a minimal bridge sphere for a link $T \subset S^3$, then $\extent(S)$ is  one less than the bridge number $b(T)$ of $T$, and if $S$ is a minimal genus Heegaard surface for the exterior of a knot $T \subset S^3$, then $\extent(S)$ is the tunnel number $t(T)$ of $T$. In both cases, $S$ will meet the requirements for being a v.p.-bridge surface for $(S^3, T)$. 

For a given $\mc{H} \in \H(M,T)$, we define the \defn{net extent} $\netextent(\mc{H})$ and \defn{width} $\width(\mc{H})$ as follows:
\[
\begin{array}{rcl}
\netextent(\mc{H}) &=& \extent(\mc{H}^+) - \extent(\mc{H}^-),\\
\width(\mc{H}) &=&2\left( \sum\limits_{H \cpt \mc{H}^+} \extent^2(H) - \sum\limits_{F \cpt \mc{H}^-} \extent^2(F)\right)
\end{array}.
\]
To each multiple v.p.-bridge surface, we can also associate a number, the \defn{net euler characteristic} of $\mc{H}$ which is simply 
\[
\netchi(\mc{H}) = -\chi(\mc{H}^+) + \chi(\mc{H}^-).\]

Our two families of invariants are then defined as
\[
\begin{array}{rcl}
\netextent_x(M,T) &=& \min\limits_{\mc{H}} \netextent(\mc{H}) \\
\width_x(M,T) &=& \min \limits_{\mc{H}} \width(\mc{H}).
\end{array}
\]
In both cases, the minimum is taken over all  $\mc{H} \in \H(M,T)$ having the property that $\netchi(\mc{H}) \leq x \leq \infty$.  As noted above for a knot $K \subset S^3$, $\netextent_x(S^3, K)$ is related to classical invariants: for any $x \geq -2$, the quantity $\netextent_x(S^3, K)$ is at most $b(K) - 1$ where $b(K)$ is the bridge number of $K$ and, for large enough $x$, $\netextent_x(S^3, K)$ is at most the tunnel number $t(K)$ of $K$. 

The formula for width, on the other hand, is motivated by a well-known formula for Gabai's width invariant \cite{G3} for knots in $S^3$. Indeed, Gabai width for $K \subset S^3$ can be defined as follows. Consider multiple v.p.-bridge surfaces $\mc{H}$ for $(S^3, K)$ with the property that the components of $\mc{H}$ are concentric spheres. Then Gabai width \cite[Lemma 6.2]{SS-width} is the minimum over all such $\mc{H}$ of the quantity
\[
\frac{1}{2}\left(\sum_{H \cpt \mc{H}^+} |H \cap K|^2 - \sum_{F \cpt \mc{H}^-} |F \cap K|^2\right).
\]
Our invariant $\width_{-2}$ for knots in $S^3$ can be seen as a variant of Gabai width, where we generalize the types of surfaces $\mc{H}$ admitted into the sum and adjust the formula to take into account the euler characteristics of the spheres. As mentioned earlier, Gabai width is not additive, while the width defined here is. In Section \ref{comparison} we analyze this phenomenon using the example of non-additivity of Gabai width proven in \cite{BT}.

We prove (Corollary \ref{Cor: non-negativity for surfaces}) that, as long as $x \geq 2g(M) - 2$, where $g(M)$ is the Heegaard genus of $M$, both $\netextent_x(M,T)$ and $\width_x(M,T)$ are non-negative. Indeed, Theorem \ref{Net Extent Detects unknot} implies that if $M$ does not have a lens space or solid torus summand, if $M$ has no non-separating spheres, and if $T$ is connected and non-empty, then $\netextent_x(M,T) = 0$ implies that $M = S^3$ and $T$ is the unknot. A similar result holds for width, although we need to add more hypotheses on $M$ or $x$.

Finally, we make a passing comment on the role of $x$.
\begin{remark}
When working with bridge surfaces or Heegaard surfaces, it is often useful to maintain some control over the euler characteristic of the surfaces. Introducing the parameter $x$ allows us to do that. Observe that, by the definition, both net extent and width are non-increasing as $x$ increases. That is, for all $x \in \Z$,
\[
\begin{array}{rcl}
\netextent_x(M,T) &\geq & \netextent_{x+1}(M,T) \\
\width_x(M,T) & \geq & \width_{x+1}(M,T).
\end{array}
\]
It is easily seen that the values of both $\netextent_x$ and $\width_x$ are integers or half-integers. Thus, the sequences $(\netextent_x(M,T))_{x}$ and $(\width_x(M,T))_{x}$ are eventually constant at $\netextent_\infty(M,T)$ and $\width_\infty(M,T)$.
\end{remark}

\subsection{Additivity}

Suppose that $(\wihat{M}_1,\wihat{T}_1)$ and $(\wihat{M}_2, \wihat{T}_2)$ are disjoint (3-manifold, graph) pairs such that $p_1 \in \wihat{T}_1$ and $p_2 \in \wihat{T}_2$ are either both disjoint from the vertices of $\wihat{T}_1$ and $\wihat{T}_2$ or both are trivalent vertices of $\wihat{T}_1$ and $\wihat{T}_2$. Let $k = 2$ if both are disjoint from the vertices and let $k = 3$ if both are trivalent vertices. We can form a new (3-manifold, graph) pair
\[
(M,T) = (\wihat{M}_1,\wihat{T}_1) \#_k (\wihat{M}_2, \wihat{T}_2)
\]
as follows: Remove an open regular neighborhood of $p_1$ and $p_2$ from $(\wihat{M}_1,\wihat{T}_1)$ and $(\wihat{M}_2, \wihat{T}_2)$ to produce spheres $P_1$ and $P_2$ in the boundaries of the resulting pairs $(M_1, T_1)$ and $(M_2, T_2)$ respectively. The spheres $P_1$ and $P_2$ are both either twice-punctured or thrice-punctured by $T_1$ and $T_2$. Let $(M,T)$ be the result of gluing the (3-manifold, graph)-pairs together by a homeomorphism $P_1 \to P_2$ taking $T_1 \cap P_1$ to $T_2 \cap P_2$. We call the image of $P_1$ (and $P_2$) in $(M,T)$ the \defn{summing sphere}. If $k = 2$, we say that $(M,T)$ is the \defn{connected sum} of $(\wihat{M}_1,\wihat{T}_1)$ and $(\wihat{M}_2, \wihat{T}_2)$; if $k = 3$, then $(M,T)$ is the \defn{trivalent vertex sum} of $(\wihat{M}_1,\wihat{T}_1)$ and $(\wihat{M}_2, \wihat{T}_2)$. We will usually write $(M,T) = (\wihat{M}_1,\wihat{T}_1) \# (\wihat{M}_2, \wihat{T}_2)$ in place of $(M,T) = (\wihat{M}_1,\wihat{T}_1) \#_2 (\wihat{M}_2, \wihat{T}_2)$.

For our purposes, we will say that a pair $(M,T)$ is \defn{trivial} if it is $(S^3, T)$ where $T$ is either an unknot or a trivial $\theta$-graph (i.e. a graph having exactly two vertices and exactly three edges each joining the two vertices which can be isotoped into a Heegaard sphere for $S^3$.) If $(M,T) = (\wihat{M}_1,\wihat{T}_1) \#_k (\wihat{M}_2, \wihat{T}_2)$, then the summing sphere is essential in $(M,T)$ if neither $(\wihat{M}_1,\wihat{T}_1)$ or $(\wihat{M}_2, \wihat{T}_2)$ is trivial.

We say that $(\wihat{M}_1, \wihat{T}_1), \hdots, (\wihat{M}_n, \wihat{T}_n)$ is a \defn{prime decomposition} of $(M,T)$ if  all of the following hold:
\begin{itemize}
\item either $n = 1$ and $(\wihat{M}_1, \wihat{T}_1) = (M,T)$ or $n \geq 2$, and $(M,T)$ is the result of sequentially connect summing and trivalent vertex summing the $(\wihat{M}_i, \wihat{T}_i)$ together (in some order).
\item For all $i$, if $(\wihat{M}_i, \wihat{T}_i)$ is trivial, then $(\wihat{M}_i, \wihat{T}_i) = (S^3, \text{ trivial } \theta\text{-graph })$ and only connected sums are performed on $(\wihat{M}_i, \wihat{T}_i)$.
\item For all $i$, if $P \subset (\wihat{M}_i, \wihat{T}_i)$ is an essential sphere, then either $P \cap \wihat{T}_i = \nil$ or $|P \cap \wihat{T}_i| \geq 4$. 
\end{itemize}

Since we require that the summing be done sequentially, the graph in $M$ dual to the summing spheres is a tree. Under the assumption that no sphere in $M$ is non-separating, that $T$ is a knot, and that $(M,T)$ is non-trivial, then Miyazaki \cite[Theorem 4.1]{Miyazaki} has shown that $(M,T)$ has a unique prime factorization, up to re-ordering. This was extended to the situation where $T$ is a $\theta$-graph by Matveev and Turaev \cite{MT}.
 
Let $\M$ be the set whose elements are irreducible (3-manifold, graph) pairs $(M,T)$ satisfying the running assumption and with $M$ connected, with $T$ non-empty, and where every sphere in $M$ separates.  Let $\M_2 \subset \M$ be the subset where $g(M) \leq 2$ and let $\M_s \subset \M$ be the subset where every closed surface in $M$ separates. We prove:

\begin{Additivity Theorem}[Additivity Theorem]
Let $(M,T) \in \M$ be non-trivial, and let $x$ be any integer with $x \geq 2g(M) - 2$. Then there is a prime factorization of $(M,T)$ into $(\wihat{M}_1, \wihat{T}_1), \hdots, (\wihat{M}_n, \wihat{T}_n)$ so that there exist integers $x_1, \hdots, x_n$, summing to at most $x - 2(n-1)$,  and
\[
\netextent_x(M,T) =  -p_3/2+ \sum_{i = 1}^n \netextent_{x_i}(\wihat{M}_i, \wihat{T}_i).
\]
where $p_3$ is the number of thrice punctured spheres in the decomposition. Furthermore, if $(M,T) \in \M_s$ or if $(M,T) \in \M_2$ and $x \leq 2$, then also
\[
\width_x(M,T) =  -p_3/2+ \sum_{i = 1}^n \width_{x_i}(\wihat{M}_i, \wihat{T}_i).
\]
\end{Additivity Theorem}

The result for width is particularly striking.  As we previously mentioned, for many years, it was an open question as to whether or not Gabai width satisfied an additivity property with respect to connected sum of knots. However, Blair and Tomova \cite{BT} proved that width is not additive. On the other hand, Theorem \ref{Main Theorem} shows that our invariant $w_{-2}$, which is a slightly modified version of Gabai width, \emph{is} additive under connected sum and, even more surprisingly, ``higher genus'' widths are also additive. For more details on the relationship between our width and Blair and Tomova's counterexamples, see Section \ref{comparison}.

\subsection{Applications to classical invariants}
We give several simple applications of our results to knots in 3-manifolds. For the statement, recall that a knot $K$ in a 3-manifold $M$ is \defn{meridionally small} or \defn{m-small} if there is no surface $S \subset (M,K)$ such that $S \cap K \neq \nil$ and $S$ is essential in $(M,K)$ (i.e. is incompressible and not $\boundary$-parallel in the exterior of $K$.) 

We give two short proofs (Theorem \ref{thm:schubert} and Theorem \ref{thm:norwood}) of classical results of Schubert \cite{Schubert} and Norwood \cite{Norwood} showing that 2-bridge knots and tunnel number 1 knots (more generally) are both prime. Scharlemann and Schultens \cite{ScharlemannSchultens-Tunnel} generalized Norwood's result to show that the tunnel number of the connected sum of $n$ knots is at least $n$.  Morimoto \cite{Morimoto} proved a stronger result for m-small knots: the tunnel number of the connected sum of $n$ m-small knots is at least the sum of the tunnel numbers of the factors. We prove a theorem which combines the Scharlemann-Schultens and Morimoto results. Dropping the hats off the summands for convenience, the statement is:

\begin{MSS}
For each $i \in \{1, \hdots, n\}$ let $K_i$ be a knot in a closed, orientable 3-manifold $M_i$ such that every sphere in $M_i$ separates and each $(M_i, K_i)$ is prime. Assume that there is an integer $j \leq n$ so that $K_i$ is m-small if and only if $i \leq j$. Then, letting $(M,K) = (M_1, K_1) \# \cdots \# (M_n, K_n)$, we have:
\[
(n - j) + t(K_1) + \hdots + t(K_j) \leq t(K) \leq (n-1) + \sum t(K_i)
\]
\end{MSS}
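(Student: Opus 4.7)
The plan is to derive both bounds from the Additivity Theorem together with two types of lower bounds on $\netextent$ for the prime factors.

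For the upper bound $t(K) \leq (n-1) + \sum t(K_i)$ the classical argument suffices: choose a tunnel system $\tau_i$ for $K_i$ of size $t(K_i)$ in $M_i$, and glue them together using $n-1$ additional arcs, one passing through each summing sphere. A regular neighborhood of $K \cup \tau$ in $M$ has handlebody complement, giving $t(K) \leq \sum t(K_i) + (n-1)$.

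For the lower bound, let $\Sigma$ be a minimal-genus Heegaard surface for the exterior of $K$ in $M$. Viewed as a closed surface in $M$ disjoint from $K$, $\Sigma$ is a multiple v.p.-bridge surface for $(M,K)$ with $\mc{H}^+ = \Sigma$, $\mc{H}^- = \nil$, $\extent(\Sigma) = g(\Sigma) - 1 = t(K)$, and $\netchi(\Sigma) = 2t(K)$. Because Dehn filling does not increase Heegaard genus, $g(M) \leq t(K) + 1$, so $x := 2t(K) \geq 2g(M) - 2$ meets the hypothesis of the Additivity Theorem. All summing spheres in this prime decomposition are twice-punctured, so $p_3 = 0$, and the theorem produces integers $x_1, \dots, x_n$ and a prime factorization of $(M,K)$ with
\[
t(K) \;\geq\; \netextent_{2t(K)}(M,K) \;=\; \sum_{i=1}^n \netextent_{x_i}(M_i,K_i).
\]
By the uniqueness of prime factorization (Miyazaki) this factorization matches $(M_1,K_1),\ldots,(M_n,K_n)$ up to reordering, so it suffices to bound each summand below.

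The proof then reduces to two claims. \emph{Claim (A):} if $K_i$ is m-small, then $\netextent_{x_i}(M_i, K_i) \geq t(K_i)$ for every $x_i$. \emph{Claim (B):} if $K_i$ is not m-small, then $(M_i, K_i)$ is non-trivial (the unknot is m-small) and $\netextent_{x_i}(M_i, K_i) \geq 1$. Summing (A) over $i \leq j$ and (B) over $i > j$ gives $t(K) \geq \sum_{i \leq j} t(K_i) + (n-j)$, as required.

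The central obstacle is Claim (A), which must rule out the possibility that introducing thin surfaces lowers $\netextent$ below the tunnel number. The idea is: given a reduced oriented multiple v.p.-bridge surface $\mc{H}$ realizing $\netextent_{x_i}(M_i, K_i)$, m-smallness of $K_i$ combined with reducedness of $\mc{H}$ forbids any thin component from meeting $K_i$ essentially, so the Scharlemann-Thompson style untelescoping of \cite{PartI} amalgamates the thick surfaces across the thin ones into a single Heegaard surface for the exterior of $K_i$, yielding $\netextent(\mc{H}) \geq t(K_i)$. Claim (B) follows from Theorem \ref{Net Extent Detects unknot}, which gives $\netextent_{x_i}(M_i,K_i) > 0$; since $K_i$ is a knot in a closed manifold where every sphere separates, every closed separating surface meets $K_i$ in an even number of points, forcing $\netextent \in \Z$ and hence $\geq 1$.
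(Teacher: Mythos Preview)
Your proposal follows the same route as the paper: bound $\netextent_x(M,K) \leq t(K)$ via a minimal-genus Heegaard surface for the exterior, apply the Additivity Theorem together with uniqueness of prime factorization, and then bound each summand's net extent from below (by $t(K_i)$ in the m-small case, by $1$ otherwise). Two small points need tightening. In Claim~(A), reducedness alone does not force thin surfaces to be essential; you must first pass to a \emph{locally thin} $\mc{H}_i$ (Theorem~\ref{partial order} and Corollary~\ref{thm:Thinning invariance}) so that Theorem~\ref{Properties Locally Thin} applies, and then---since the single thick surface meeting $K_i$ is a priori only a bridge surface, not a Heegaard surface for the exterior---either meridionally stabilize it before amalgamating (this is what the paper does) or amalgamate first and then tube along the bridges. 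In Claim~(B), components of $\mc{H}$ need not separate $M_i$, so your parity remark as stated is incomplete; a clean fix is to amalgamate $\mc{H}$ to a single connected v.p.-bridge surface $J$, which does separate $M_i$ and hence meets the knot $K_i$ in an even number of points, giving $\netextent(\mc{H}) = \extent(J) \in \Z$.
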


Kobayashi-Rieck \cite{KR}  studied the asymptotic properties of tunnel number of the connected sum $nK$ of a knot $K$ with itself $n$ times. As part of that project, they showed that for ``admissible'' m-small knots $K$,
\[
0 \leq \lim\limits_{n \to \infty} \frac{t(nK) - nt(K)}{n-1} <1. 
\]
Along the way, they prove the left-hand inequality holds for each term of the sequence (not just in the limit.) Our Theorem \ref{MSS generalization}, gives another proof that $0 \leq \frac{t(nK) - nt(K)}{n-1}$ for m-small knots.

If $K \subset M$ is a link, a surface $S$ is a \defn{genus $g$ bridge surface} for $K$ if after removing a regular neighborhood of all components of $K$ that are disjoint from $S$, $S$ is a Heegaard surface for the resulting manifold which intersects $K$ transversally and divides $K$ into arcs parallel into $S$. The \defn{genus $g$ bridge number} of $(M,K)$ is the smallest natural number $b_g(K)$ such that there is a genus $g$ bridge surface for $K$. 

\begin{remark}
This is not quite the definition given by Doll \cite{Doll} for higher genus bridge number. He defines $b_g(K)$ only when it is positive. Subsequently, opinions have differed on how to extend the definition to allow $b_g(K) = 0$. Some authors (as we do) declare $b_g(K) = 0$ if and only if $K$ is a core loop for a genus $g$ Heegaard splitting and others if and only if $K$ is isotopic into a genus $g$ Heegaard surface for $M$. 
\end{remark}

A knot $K$ in a 3-manifold $M$ is \defn{small} if $M \setminus K$ contains no closed essential surfaces. By \cite[Theorem 2.0.3]{CGLS}, a small knot in $S^3$ is also m-small, but we will not use that fact. Observe, however, that if $M$ contains a non-separating sphere, then $(M,K)$ is not small and m-small. We show that this higher genus bridge number satisfies a certain super-additivity for small knots, in the following sense:

\begin{theorem}\label{Adding bridge number}
Suppose that $(M_i, K_i)$ are small and m-small for $i \in \{1, \hdots, n\}$. Let $(M,K) = \#_{i =1}^n(M_i, K_i)$ and let $g \geq g(M,K)$. Then there exist $g_i$ such that $\sum g_i \leq g$, $g_i \geq g(M_i, K_i)$ and 
\[
\sum\limits_{i=1} (g_i + b_{g_i}(K) - 1) \leq g + b_g(K) - 1.
\]
\end{theorem}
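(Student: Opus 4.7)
The plan is to apply the Additivity Theorem to $(M,K)$ seeded by a genus-$g$ bridge surface realizing $b_g(K)$, and then to exploit the small and m-small hypotheses on each summand to show that the minimizing multiple v.p.-bridge surface on each $(M_i,K_i)$ is a classical bridge surface. First I would verify that $(M,K) \in \M$: each $(M_i,K_i)$ being small and m-small forces $M_i$ to be irreducible with every sphere separating, and these properties are inherited by $M = \#_i M_i$ and by $(M,K)$. Moreover, small $+$ m-small makes each $(M_i,K_i)$ prime, so by Miyazaki's uniqueness the given decomposition is the prime factorization of $(M,K)$.

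Set $x = 2g - 2$. Since $g \geq g(M,K) \geq g(M)$, we have $x \geq 2g(M) - 2$ and the Additivity Theorem applies. A genus-$g$ bridge surface $H$ for $(M,K)$ with $b_g(K)$ bridges, viewed as a reduced oriented v.p.-bridge surface $\mc{H} = \{H\}$ with one thick component and no thin components, satisfies $\netchi(\mc{H}) = -\chi(H) = 2g-2 = x$ and $\netextent(\mc{H}) = \extent(H) = g + b_g(K) - 1$, giving $\netextent_x(M,K) \leq g + b_g(K) - 1$. The Additivity Theorem (with $p_3 = 0$ since only connected sums occur) then produces integers $x_1, \ldots, x_n$ with $\sum_i x_i \leq 2g - 2n$ and
\[
\netextent_x(M,K) = \sum_{i=1}^n \netextent_{x_i}(M_i, K_i).
\]

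For each $i$, small $+$ m-small means $(M_i,K_i)$ admits no essential surfaces (closed or meridional). Since the thin surfaces of any reduced oriented multiple v.p.-bridge surface are essential in $(M_i,K_i)$ by the structural results from \cite{PartI} recalled in Section \ref{sec:prel}, the minimizing $\mc{H}_i$ has no thin surfaces; connectedness of $M_i$ then forces $\mc{H}_i^+$ to be a single connected surface, namely a classical genus-$g_i$ bridge surface with $b_{g_i}(K_i)$ bridges. Hence $\netextent_{x_i}(M_i,K_i) = g_i + b_{g_i}(K_i) - 1$ for some $g_i \geq g(M_i,K_i)$ with $2g_i - 2 \leq x_i$. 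Summing the latter yields $\sum_i g_i \leq g$, and combining the displayed bounds gives
\[
\sum_{i=1}^n (g_i + b_{g_i}(K_i) - 1) = \netextent_x(M,K) \leq g + b_g(K) - 1,
\]
which is the desired inequality. The main obstacle is the step reducing each minimizing $\mc{H}_i$ to a single classical bridge surface; this uses the characterization from \cite{PartI} that thin surfaces in a reduced oriented multiple v.p.-bridge structure are essential, together with the elementary observation that in the absence of thin surfaces each v.p.-compressionbody has only a (connected) positive boundary, so connectedness of $M_i$ forces a single thick component.
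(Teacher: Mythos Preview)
Your overall strategy is sound and close to the paper's, but there is one slip you should fix: essentiality of thin surfaces is a property of \emph{locally thin} multiple v.p.-bridge surfaces (Theorem~\ref{Properties Locally Thin}), not of merely reduced ones. A minimizer $\mc{H}_i \in \H(M_i,K_i)$ realizing $\netextent_{x_i}(M_i,K_i)$ need not be locally thin, so you cannot immediately conclude $\mc{H}_i^- = \nil$. The repair is immediate: thin $\mc{H}_i$ to some locally thin $\mc{K}_i$ via Theorem~\ref{partial order}; by Corollary~\ref{thm:Thinning invariance} this does not increase $\netextent$ or $\netchi$, so $\mc{K}_i$ is still a minimizer with $\netchi(\mc{K}_i) \leq x_i$, and now the small and m-small hypotheses force $\mc{K}_i^- = \nil$. (A similar remark applies to your initial bridge surface $H$: it may not be reduced, but reducing it only helps.)

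Once that is patched, your argument and the paper's are two organizations of the same idea. The paper thins a realizing bridge surface for $(M,K)$ to a locally thin $\mc{H}$, locates the summing spheres inside $\mc{H}^-$ via Proposition~\ref{Thin summing spheres}, and then argues directly that any remaining thin component would be essential in some $(\wihat{M}_i,\wihat{K}_i)$, contradicting small $+$ m-small; hence each summand carries a single thick surface $H_i$, and the inequality follows from $\netextent(\mc{H}) \leq \extent(S)$. You instead invoke Theorem~\ref{Main Theorem} as a black box to get $\netextent_x(M,K) = \sum_i \netextent_{x_i}(M_i,K_i)$ and then run the essentiality argument separately inside each summand. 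Your route is a bit more modular (and buys you the equality $\netextent_{x_i}(M_i,K_i) = g_i + b_{g_i}(K_i) - 1$, though only the inequality $\geq$ is needed), while the paper's direct route avoids appealing to the full additivity machinery and keeps explicit control of the single global $\mc{H}$.
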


Restricting to two summands for convenience, we have:

\begin{theorem}
Suppose that for $i = 1,2$ the pair $(M_i, K_i)$ is small and m-small. Let $g_1 = g(M_1)$ and $g_2 = g(M_2)$ and assume that $t(K_i) \geq g_i$ for $i \in \{1,2\}$.  Let $(M,K) = (M_1, K_1) \# (M_2, K_2)$ and let $g = g(M)$. Then:
\[b_g(M,K) = b_{g_1}(M_1, K_1) + b_{g_2}(M_2, K_2) - 1.\]
\end{theorem}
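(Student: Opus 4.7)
The strategy is to establish the two inequalities separately. First, $g = g(M_1 \# M_2) = g_1 + g_2$ by Haken's additivity of Heegaard genus for closed orientable 3-manifolds, and $g(M_i, K_i) = g(M_i) = g_i$ since any minimum-genus Heegaard surface for $M_i$ can be perturbed into bridge position with respect to $K_i$. For the lower bound, I would apply Theorem \ref{Adding bridge number} (available because both $(M_i, K_i)$ are small and m-small) with $n = 2$; this produces integers $g'_1, g'_2$ satisfying $g'_i \geq g(M_i, K_i) = g_i$ and $g'_1 + g'_2 \leq g = g_1 + g_2$, which force $g'_i = g_i$. The resulting inequality
\[
(g_1 + b_{g_1}(K_1) - 1) + (g_2 + b_{g_2}(K_2) - 1) \leq g + b_g(K) - 1
\]
rearranges immediately to $b_{g_1}(K_1) + b_{g_2}(K_2) - 1 \leq b_g(K)$.

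For the upper bound I would construct an explicit genus $g$ bridge surface realizing the claimed value by amalgamating optimal bridge surfaces of the two summands. The hypothesis $t(K_i) \geq g_i$ forces $b_{g_i}(K_i) \geq 1$: otherwise $K_i$ would be a core loop of a genus $g_i$ Heegaard splitting, giving $g(M_i \setminus N(K_i)) \leq g_i$ and hence $t(K_i) \leq g_i - 1$. Choose bridge surfaces $S_i$ of genus $g_i$ for $(M_i, K_i)$ realizing $b_{g_i}(K_i)$ bridges, and form the connected sum by removing a small ball interior to a bridge arc of each $S_i$. The surfaces $S_1, S_2$ together with the summing sphere $P$ form a multiple bridge surface for $(M,K)$; amalgamating $S_1$ with $S_2$ across $P$ via a tube running along one of the two half bridge arcs produced by the cut yields a single bridge surface of genus $g_1 + g_2 = g$ in which this half-arc is absorbed into the surface and its partner remains as a bridge arc, for a total bridge count of $b_{g_1}(K_1) + b_{g_2}(K_2) - 1$.

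The principal obstacle is verifying the amalgamation in the upper-bound construction: one must confirm that the tube adds no genus beyond $g_1 + g_2$ (since $P$ is a sphere, a single tube joining $S_1$ to $S_2$ across $P$ produces a connected surface of genus exactly $g_1 + g_2$), and that the bridge count reduces by exactly one rather than zero or two. Once this is verified, the two inequalities combine to give the claimed equality.
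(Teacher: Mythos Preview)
Your proof is correct and follows essentially the same route as the paper's: the lower bound via Theorem \ref{Adding bridge number} together with the observation that $g_i' \geq g(M_i)$ and $g_1' + g_2' \leq g = g(M_1) + g(M_2)$ forces $g_i' = g_i$, and the upper bound via a bridge-surface amalgamation. The only difference is that for the upper bound the paper simply invokes Doll's Bridge Inequality $b_g(K) \leq \max(b_{g_1}(K_1),1) + \max(b_{g_2}(K_2),1) - 1$ and then uses $t(K_i) \geq g_i$ to strip the maxima, whereas you reconstruct that inequality by hand; your tubing description is exactly the construction underlying Doll's bound.
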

\begin{proof}
By Theorem \ref{Adding bridge number}, given $g \geq g(M_1 \# M_2)$, there exist $g_1$ and $g_2$ such that $g_1 + g_2 \leq g$ and so that
\[
(g_1 + b_{g_1}(K_1) - 1) + (g_2 + b_{g_2}(K_2) - 1) \leq g + b_g(K) - 1.
\]
Since $g(M_1) + g(M_2) = g$, we actually have $g_1 + g_2 = g$. 
Thus,
\[
(b_{g_1}(K_1) - 1) + (b_{g_2}(K_2) - 1) \leq b_g(K) - 1.
\]

By adapting Lemma \cite[Bridge Inequality 1.2]{Doll} to our definitions,  we have
\[
b_{g}(K)\leq \max(b_{g_1}(K_1),1) + \max(b_{g_2}(K_2),1) - 1. 
\]
Since $t(K_i) \geq g_i$, we have:
\[
b_{g}(K)\leq b_{g_1}(K_1) + b_{g_2}(K_2) - 1. 
\]

Thus,
\[
b_g(K) = b_{g_1}(K_1) + b_{g_2}(K_2) - 1
\]
\end{proof}

This solves Doll's Conjecture 1.1 for small knots $K_1$ and $K_2$ and for $g = g(M_1) + g(M_2)$.

Finally, we we consider composite $(g,b)$-knots. A knot $K$ has a $(g,b)$-decomposition if there is a genus $g$ bridge surface intersecting $K$ in $2b$ points. If $b = 0$, this means that $K$ is a core loop of a compressionbody to one side of a genus $g$ Heegaard surface. The knot $K$ is a $(g,b)$-knot if it has a $(g,b)$-decomposition and does not have either a $(g-1,b+1)$ or $(g, b-1)$ decomposition. A (0,2)-knot is also called a 2-bridge knot. Morimoto \cite{Morimoto15} showed that composite $(0,3)$ knots are the sum of two 2-bridge knots and composite $(1,2)$-knots are the connected sum of a 2-bridge knot and a $(1,1)$-knot. We prove a far-reaching generalization of Morimoto's theorems:

\begin{Morimotogen}
Suppose that $K \subset S^3$ is a composite $(g,b)$ knot. Then the number of prime summands is at most $g + b - 1$. If the number of summands is exactly  $m = g+ b - 1$, then at least
\[ \frac{g}{2} + (b-1)\] of the summands have (1,1)-decompositions and at least $(b-1)$ of those are 2-bridge knots.
\end{Morimotogen}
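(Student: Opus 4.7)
The plan is to turn a $(g,b)$-decomposition of $K$ into a v.p.-bridge surface and apply the Additivity Theorem. A $(g,b)$-decomposition of $K$ gives a connected thick surface $H$ of genus $g$ with $2b$ punctures; taking $\mc{H} = H$ and $\mc{H}^- = \nil$, we have $\netchi(\mc{H}) = 2g-2$ and $\netextent(\mc{H}) = g+b-1$, so
\[
\netextent_{2g-2}(S^3, K) \leq g+b-1.
\]
Since $(S^3, K) \in \M$ and $2g-2 \geq 2g(S^3) - 2 = -2$, the Additivity Theorem produces a prime decomposition $K = K_1 \# \cdots \# K_n$ and integers $x_1, \ldots, x_n$ with $\sum x_i \leq (2g-2) - 2(n-1)$ such that
\[
\netextent_{2g-2}(S^3, K) = \sum_{i=1}^n \netextent_{x_i}(S^3, K_i),
\]
and with no $p_3$ term since $K$ is a knot (all summing spheres are twice-punctured). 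Because each $K_i$ is a non-trivial knot in $S^3$, the unknot detection result (Theorem \ref{Net Extent Detects unknot}), combined with integrality of $\netextent$ for knots in $S^3$ (bridge surfaces have even Euler characteristic and meet $K$ in an even number of points), gives $\netextent_{x_i}(S^3, K_i) \geq 1$. Summing yields $n \leq g+b-1$, which is the first assertion.

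For the equality case $n = m = g+b-1$, each inequality above must be tight, so $\netextent_{x_i}(S^3, K_i) = 1$ for every $i$ and $\sum x_i \leq 2-2b$. Each $x_i$ may be taken to be an even integer with $x_i \geq -2$: even because every reduced v.p.-bridge surface has even $\netchi$, and at least $-2$ because a bridge sphere for $K_i$ realizes $\netchi = -2$. Setting $y_i = x_i + 2 \in \{0,2,4,\ldots\}$ yields $\sum y_i \leq 2g$. Let $C$ be the number of indices with $y_i = 0$ and $A$ the number with $y_i \leq 2$. The inequalities $\sum y_i \geq 2(n-C)$ and $\sum y_i \geq 4(n-A)$ give $C \geq n - g = b - 1$ and $A \geq n - g/2 = g/2 + b - 1$.

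The final step interprets these indices. I claim that $x_i = -2$ together with $\netextent_{-2}(S^3, K_i) = 1$ forces $K_i$ to be 2-bridge: after standard untelescoping, a minimizing reduced v.p.-bridge surface with $\netchi \leq -2$ and $\netextent = 1$ must reduce to a single connected thick surface of $-\chi \geq 2$ and extent $1$, which can only be a four-punctured sphere. Analogously, $x_i \leq 0$ combined with $\netextent_{x_i}(S^3, K_i) = 1$ forces $K_i$ to admit a $(1,1)$-decomposition, because the minimizing configuration reduces to either a four-punctured sphere or a twice-punctured torus, each giving a $(1,1)$-decomposition. Combining these identifications with the counts $C \geq b - 1$ and $A \geq g/2 + b - 1$ yields the second assertion, since the $C$ 2-bridge summands are among the $A$ summands with $(1,1)$-decompositions.

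The principal obstacle is this final structural reduction: showing that a minimizing multiple v.p.-bridge surface with $\netchi$ bounded above and $\netextent = 1$ can be replaced by a single-surface bridge decomposition of the same complexity. This should follow from the untelescoping machinery and small-$\netextent$ rigidity developed in \cite{PartI}, exploiting that $S^3 \setminus K_i$ admits few essential thin surfaces capable of lowering the net extent.
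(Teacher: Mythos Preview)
Your overall strategy matches the paper's: view the $(g,b)$-surface as a v.p.-bridge surface with $\netextent = g+b-1$ and $\netchi = 2g-2$, apply the Additivity Theorem and unknot detection to bound the number of summands, and then in the equality case use parity and the bound $\sum x_i \leq -2(b-1)$ to count how many $x_i$ lie in $\{-2\}$ or $\{-2,0\}$. Your counting via $y_i = x_i + 2$ is a clean repackaging of the paper's $n_-,n_0,n_+$ bookkeeping and gives the same inequalities. (Minor quibble: the lower bound $x_i \geq -2$ comes directly from the ``realizable'' clause in the Additivity Theorem, not from the existence of a bridge sphere.)

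The genuine gap is the paragraph you flag as ``the principal obstacle.'' You assert that a locally thin $\mc{H}$ with $\netchi(\mc{H}) \leq -2$ (respectively $\leq 0$) and $\netextent(\mc{H}) = 1$ ``must reduce to a single connected thick surface,'' namely a four-punctured sphere (respectively a four-punctured sphere or a twice-punctured torus). This is precisely the content of Theorem~\ref{netext 1 class} in the paper, and it does not follow from untelescoping: thinning \emph{increases} the number of thick and thin surfaces, it does not collapse them, and there is no general principle forcing $\mc{H}^- = \nil$ when $\netextent$ is small. The paper's argument for this step is a careful structural analysis: one uses Lemma~\ref{lem:Component bound} to see every component of $\mc{H}$ is a sphere or torus, then the identity $2\netextent(\mc{H}) = \sum_{(C,T_C)} \delta(C,T_C)$ together with Corollary~\ref{delta zero} to classify the possible v.p.-compressionbodies, and finally a chunk-by-chunk count (exploiting that each thin sphere must meet $K_i$ at least four times by primeness) to rule out all configurations except a single $(1,1)$ or $(0,2)$ bridge surface. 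Your appeal to ``few essential thin surfaces'' is in the right spirit but does not substitute for this analysis; in particular nothing in \cite{PartI} hands you the conclusion directly.
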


Corollary \ref{MorimotoCor} explains how to obtain Morimoto's theorems from this result.
 
 \section{Preliminaries}\label{sec:prel}

\subsection{Additional Notation}
For a (3-manifold, graph) pair $(M,T)$, recall that we use $M\setminus T$ denote the exterior of $T$ and for a surface $S \subset (M,T)$, the notation $(M,T) \setminus S$ denote the result of removing an open regular neighborhood of $S$ from both $M$ and $T$. All surfaces appearing in this paper are tame, compact, and orientable. A surface $S \subset (M,T)$ is \defn{essential} if $S\setminus T$ is incompressible and not $\boundary$-parallel in $M\setminus T$ and $S$ is not a 2-sphere disjoint from $T$ bounding a 3-ball disjoint from $T$. We use $(C, T_C) \cpt (M,T)\setminus S$ to indicate that $C$ is a component of $M\setminus S$ and $T_C = T \cap C$.

\subsection{Generalizations of compressionbodies}
We begin by generalizing the usual notion of ``compressionbody'' to obtain objects we call ``v.p.-compressionbodies''.  Just as traditional compressionbodies can be cut open along a collection of discs to obtain 3--balls and the product of a surface with an interval, so our ``v.p.-compressionbodies'' can be cut open along generalizations of compressing discs, called ``sc-discs'', to obtain very simple (3--manifold, graph) pairs. Essentially, an ``sc-disc'' is a compressing disc that is allowed to intersect the graph in a single point. For more on why we need this generalization, see \cite{PartI} .

\begin{definition}
Suppose that $S \subset (M,T)$ is a surface and that $D$ is an embedded disc in $M$ such that the following hold:
\begin{enumerate}
\item $\boundary D \subset (S \setminus T)$, the interior of $D$ is disjoint from $S$, and $D$ is transverse to $T$.
\item $|D \cap T| \leq 1$.
\item There is no disc $E \subset S$ such that $\boundary E = \boundary D$ and $E \cup D$ bounds either a 3-ball in $M$ disjoint from $T$ or a 3-ball in $M$ whose intersection with $T$ consists entirely of a single unknotted arc with one endpoint in $E$ and one endpoint in $D$.

\end{enumerate}
Then $D$ is an \defn{sc-disc}. We categorize sc-discs into compressing discs, cut discs, semi-cut discs, and semi-compressing discs. If $|D \cap T| = 0$ and $\boundary D$ does not bound a disc in $S\setminus T$, then $D$ is a \defn{compressing disc}. If $|D \cap T| = 0$ and $\boundary D$ does bound a disc in $S\setminus T$, then $D$ is a \defn{semi-compressing disc}. If $|D \cap T| = 1$ and $\boundary D$ does not bound an unpunctured disc or a once-punctured disc in $S\setminus T$, then $D$ is a \defn{cut disc}. If $|D \cap T| = 1$ and $\boundary D$ does bound an unpunctured disc or a once-punctured disc in $S\setminus T$, then $D$ is a \defn{semi-cut disc}. A \defn{c-disc} is a compressing disc or cut disc. The surface $S \subset (M,T)$ is \defn{c-incompressible} if $S$ does not have a c-disc; it is \defn{c-essential} if it is essential and c-incompressible. If $S$ is separating and there is a pair of disjoint sc-discs on opposite sides of $S$, then $S$ is \defn{sc-weakly reducible}, otherwise it is \defn{sc-strongly irreducible}. 
\end{definition}
 
 \begin{remark}
 A Heegaard surface in a 3-manifold $M$ is \defn{weakly reducible} if it has pair of compressing discs on opposite sides that are disjoint from each other. Casson and Gordon \cite{CG} (see also \cite{ST-thin, Moriah}) showed that weakly reducible Heegaard surfaces often give rise to essential surfaces in the 3-manifold. In \cite{PartI} (see Theorem \ref{Properties Locally Thin} below), we explain how to strengthen these results to  (3--manifold, graph) pairs. Hempel \cite{Hempel} reinterpreted weak reducibility in terms of the curve complex of the Heegaard surface. For Hempel, the ``distance'' of a Heegaard surface is the distance in the curve complex between the disc sets for the 3--manifolds on either side of the Heegaard surface. This definition can be extended to apply to any separating surface which is compressible to both sides. We could then reinterpret our notion of sc-weakly reducible in terms of the distance between disc sets corresponding to the sets of sc-discs on either side of the surface. However, in what follows, we do not need this interpretion.
 \end{remark}
 
We can now define our generalization of traditional compressionbodies. See Figure \ref{Fig:  arc types} for an example.

\begin{definition}
Suppose that $F$ is a closed, connected, orientable surface. We say that $(F \times I, T)$ is a \defn{trivial product compressionbody} if $T$ is isotopic to the (possibly empty) union of vertical arcs. We let $\boundary_\pm (F \times I) = F \times \{\pm 1\}$. If $B$ is a 3--ball and if $T \subset B$ is a (possibly empty) connected, properly embedded, $\boundary$-parallel tree, having at most one interior vertex, then we say that $(B, T)$ is a \defn{trivial ball compressionbody}. We let $\boundary_+ B = \boundary B$ and $\boundary_- B = \nil$.  A \defn{trivial compressionbody} is either a trivial product compressionbody or a trivial ball compressionbody. 

A pair $(C,T)$, with $C$ connected, is a \defn{v.p.-compressionbody} if there is some component, denoted $\bdd_+C$, of $\bdd C$ and a collection of pairwise disjoint sc-discs $\mc{D} \subset (C,T)$ for $\boundary_+ C$  such that the result of $\boundary$-reducing $(C,T)$ using $\mc{D}$ is a union of trivial compressionbodies. The set of sc-discs $\mc{D}$ is called a \defn{complete collection of sc-discs for $(C,T)$}. The set $\bdd C \setminus \bdd_+C$ is denoted by $\bdd_-C$. 

An edge of $T$ disjoint from $\boundary_+ C$ is a \defn{ghost arc}. An edge of $T$ with one endpoint in $\boundary_+ C$  and one in $\boundary_- C$ is a \defn{vertical arc}. A component of $T$ which is an arc having both endpoints on $\boundary_+ C$  is a \defn{bridge arc}. A component of $T$ which is homeomorphic to a circle and is disjoint from $\boundary C$ is called a \defn{core loop}.  A \defn{bridge disc} for $\boundary_+ C$ in $C$ is an embedded disc in $C$ with boundary the union of two arcs $\alpha$ and $\beta$ such that $\alpha \subset \boundary_+ C$ joins distinct points of $\bdd_+C \cap T$ and $\beta$ is the union of edges of $T$. We will only consider bridge discs which are disjoint from the vertices of $T$.
 \end{definition}

Figure \ref{Fig:  arc types} depicts a v.p.-compressionbody $(C,T)$ containing three vertical arcs, one ghost arc, one bridge arc, and one core loop. If $(C,T)$ is a v.p.-compressionbody such that $T$ has no interior vertices, then every component of $T$ is either a vertical arc, ghost arc, bridge arc, or core loop. We will often reduce to this situation by drilling out vertices of $T$ (i.e. removing a regular neighborhood of them so that vertices correspond to new spherical boundary components of the resulting 3-manifold.)

\begin{center}
\begin{figure}[tbh]
\centering
\labellist \small\hair 2pt 
\pinlabel {$\boundary_+ C$} [r] at 1 139
\pinlabel {$\boundary_- C$} [r] at 35 7
\endlabellist
\includegraphics[scale=0.4]{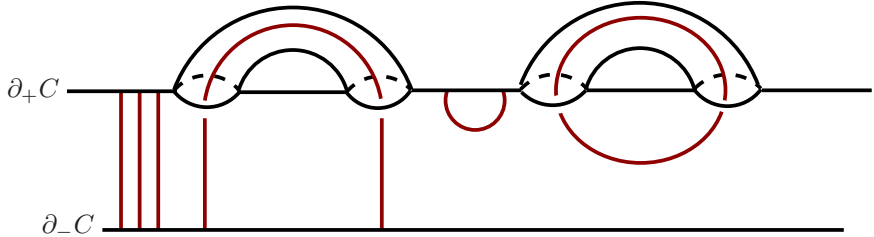}
\caption{A v.p.-compressionbody $(C,T)$. From left to right we have three vertical arcs, one ghost arc, one bridge arc, and one core loop in $T$. This figure was reappropriated from \cite{PartI}.}
 \label{Fig:  arc types}
\end{figure}
\end{center}

\begin{remark}\label{handle remark 1}
Just as in the traditional setting, there is a way of viewing v.p.-compressionbodies in terms of 3--dimensional handle theory. Although this perspective underlies our approach, we do not explicitly rely on it in what follows. Classically, compressionbodies can be constructed by starting with 3--balls and $F \times I$ for a closed surface $F$ (let us call these ``0--handles'') and then attaching 1--handles, each of the form $D^2 \times I$ so that $D^2 \times \boundary I$ lies on the union boundary of the 3--balls with $F \times \{1\}$. There is a similar construction of v.p.-compressionbodies. Consider trivial compressionbodies as 0--handles. There are two kinds of 1--handles. One is the (3--manifold, graph) pair $(D^2 \times I, \nil)$ (i.e. a traditional 3--dimensional 1--handle). The other is the pair $(D^2 \times I, (0,0) \times I)$ (i.e. a traditional 1--handle together with its core.) We then attach 1--handles to 0--handles by gluing $D^2 \times \boundary I$ to the positive boundary of the 0-handles. We also insist that for the first kind of 1--handle the attaching region is disjoint from the graph in the 0--handles and for the second kind of 1--handle, the center of each disc in the attaching region is glued to an endpoint of the graph in the 0-handle and the attaching region is otherwise disjoint from the graph in the 0--handles. 

Equivalently, a v.p-compressionbody can be constructed by starting with a trivial product compressionbody $\boundary_+ C \times I$ and then attaching certain kinds of 2--handles and 3--handles to $\boundary_+ C \times \{0\}$. As with 1--handles, there are two kinds of 2--handles: one kind has no graph in it and the other kind contains the cocore of the 2--handle. 3--handles are equivalent to 0--handles, but with non-empty attaching region.
 \end{remark}

What follows is a key property of v.p.-compressionbodies that we will use on several occasions.

\begin{lemma}[{\cite[Lemma 3.5]{PartI}}]\label{Lem: Invariance}
Suppose that $(C,T)$ is a v.p.-compressionbody such that no component of $\boundary_- C$ is a 2-sphere intersecting $T$ exactly once. The following are true:
\begin{enumerate}
\item\label{it: no sc} $(C,T)$ is a trivial compressionbody if and only if there are no sc-discs for $\boundary_+ C$.
\item\label{it: reducing} If $D$ is an sc-disc for $\boundary_+ C$, then reducing $(C,T)$ using $D$ is the union of v.p.-compressionbodies. Furthermore, there is a complete collection of sc-discs for $(C,T)$ containing $D$.  
\end{enumerate}
\end{lemma}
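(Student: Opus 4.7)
The strategy is to first establish (1), which characterizes trivial compressionbodies by the absence of sc-discs, and then use (1) to prove (2) via a standard intersection-reduction argument on a complete collection.

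For (1), the reverse direction is essentially definitional: by definition of a v.p.-compressionbody, $(C,T)$ admits a complete collection $\mc{D}$ of sc-discs, and if there are no sc-discs for $\boundary_+C$ at all then $\mc{D}=\nil$ witnesses $(C,T)$ itself as a (connected) trivial compressionbody. For the forward direction, I would verify case by case that trivial compressionbodies admit no sc-discs for $\boundary_+$. In a trivial product compressionbody $(F\times I, T)$ with $T$ a union of vertical arcs, an essential curve on $F\times\{1\}$ cannot bound a disc in $F\times I$ disjoint from $T$, so no compressing disc exists, and a cut-disc curve would bound a once-punctured disc on $F\times\{1\}$ (the puncture being the unique vertical arc the disc meets), contradicting the non-triviality clause (3) of the sc-disc definition. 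A semi-compressing or semi-cut disc would cap off with its bounded disc on $S$ to give either a sphere disjoint from $T$ bounding a ball disjoint from $T$, or a sphere hitting $T$ twice bounding a ball meeting $T$ in one unknotted arc — again forbidden by (3). For trivial ball compressionbodies $(B,T)$, the analogous argument uses the $\boundary$-parallelism of the tree $T$. The hypothesis that no component of $\boundary_-C$ is a twice-punctured sphere is needed precisely to ensure these cap-off moves produce the configurations forbidden by (3).

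For (2), let $\mc{D}$ be a complete collection of sc-discs for $(C,T)$, and choose $D$ within its isotopy class so that $|D\cap\mc{D}|$ is minimal. The main task is to show $D\cap\mc{D}=\nil$. Assuming intersections exist, pick an innermost disc of $D$ cut off by $\mc{D}$; this is an embedded disc $D'$ in the pair $(C,T)\setminus\mc{D}$, which by definition is a union of trivial compressionbodies. If $D'$ is an sc-disc in the component containing it, then (1) provides a contradiction. Otherwise $D'$ is inessential in the sense of failing condition (3) of the sc-disc definition, and one can use the disc on the other side (in the relevant trivial piece) to isotope $D$ across, reducing $|D\cap\mc{D}|$ and contradicting minimality. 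Intersection arcs are handled analogously by outermost-arc surgery: an outermost arc on $D$ produces a disc giving a replacement sc-disc for one member of $\mc{D}$ with strictly fewer intersections with $D$, again contradicting minimality (after checking the replacement remains an sc-disc of an admissible type). Once $D\cap\mc{D}=\nil$, the surgered collection $\mc{D}\cup\{D\}$ (with any disc now parallel to $D$ removed) is a complete collection containing $D$, and reducing $(C,T)$ first along $D$ leaves the remaining discs as a complete collection of sc-discs on each connected piece, certifying each piece as a v.p.-compressionbody.

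The main obstacle is the bookkeeping in the outermost-arc surgery step: both $D$ and discs of $\mc{D}$ may independently be any of the four sc-disc types (compressing, cut, semi-compressing, semi-cut), and each combination requires verifying that the surgered disc is still an sc-disc, in particular that it satisfies the non-triviality clause (3). This is where the hypothesis on $\boundary_-C$ is repeatedly invoked, since the forbidden configurations in (3) involve small spheres meeting $T$ in at most two points — exactly the configurations the hypothesis rules out along $\boundary_-C$. With this case analysis in hand, both statements follow from the minimality framework above.
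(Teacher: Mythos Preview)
This lemma is not proved in the present paper; it is quoted verbatim from \cite[Lemma~3.5]{PartI} and used as a black box. So there is no proof here to compare against. Your outline is the standard approach for this kind of statement (innermost-disc/outermost-arc intersection reduction against a complete collection), and it is almost certainly what the cited paper does as well.

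One correction: you misquote the hypothesis. The lemma assumes no component of $\boundary_-C$ is a sphere meeting $T$ \emph{exactly once}, not twice. This matters for the bookkeeping you flag at the end. When you perform an outermost-arc surgery on a cut or semi-cut disc, the surgered disc can pick up or lose a puncture, and the danger is that the resulting sphere (from capping off with a subdisc of $\boundary_+C$) meets $T$ in exactly one point; the hypothesis on $\boundary_-C$ is what ultimately rules this out, since such a sphere would have to be parallel into $\boundary_-C$. Your invocation of the hypothesis in part~(1) is also slightly off: in a trivial product compressionbody the argument that there is no cut disc does not use this hypothesis at all (any disc in $F\times I$ with boundary on $F\times\{1\}$ has boundary bounding a disc there, forcing it to be semi-cut rather than cut, and then clause~(3) kills it directly). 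The once-punctured-sphere hypothesis is really needed in part~(2), in the surgery case analysis and in checking that the pieces after reducing along $D$ still satisfy the standing assumptions so that induction can proceed.
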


\subsection{Thick and thin surfaces}
Gabai width \cite{G3} for a knot $K$ is defined using Morse functions $h \co S^3 \to \R$ which restrict to  Morse functions $h|_K$ on the knot and then considering how the maxima and minima of $h|_K$ relate to each other. Inspired by this, Scharlemann and Thompson \cite{ST-thin} defined the width of a closed 3--manifold $M$ by considering handle decompositions of $M$ with a single 0--handle and a single 3-handle and examining how the 1--handles and 2--handles relate to each other. For both Gabai width and Scharlemann-Thompson's width, it is the associated thick and thin surfaces which make the theories especially useful. Hayashi and Shimokawa \cite{HS01} focused attention on the union of these surfaces (which they call a ``multiple Heegaard splitting''). Here is our version; see \cite{PartI} for more detail and motivation.

\begin{definition}
A  \defn{multiple v.p.-bridge surface} for $(M,T)$ is a closed (possibly disconnected) surface $\mc{H} \subset (M,T)$ such that:
\begin{itemize}
\item $\mc{H}$ is the disjoint union of $\mc{H}^-$ and $\mc{H}^+$, each of which is the union of components of $\mc{H}$;
\item $(M,T)\setminus \mc{H}$ is the union of embedded v.p.-compressionbodies $(C_i, T_i)$ with $\mc{H}^- \cup \boundary M= \bigcup \boundary_- C_i$ and $\mc{H}^+ = \bigcup \boundary_+ C_i$;
\item Each component of $\mc{H}$ is adjacent to two distinct v.p.-compressionbodies.
\end{itemize}
The components of $\mc{H}^-$ are called \defn{thin surfaces} and the components of $\mc{H}^+$ are called \defn{thick surfaces}. If $\mc{H}$ is connected, then $\mc{H} = \mc{H}^+$ is called a \defn{v.p.-bridge surface} for $(M,T)$.
\end{definition}

Observe that, for a multiple v.p.-bridge surface $\mc{H}$ of $(M,T)$, each component of $\mc{H}^+$ is a v.p.-bridge surface for the component of $(M,T)\setminus \mc{H}^-$ containing it.

We are usually interested in multiple v.p.-bridge surfaces that have certain additional properties:
\begin{definition}[{for details, see \cite[Section 3.2]{PartI}}]
Suppose that $\mc{H}$ is a multiple v.p.-bridge surface for $(M,T)$. Suppose that each component of $\mc{H}$ is given a transverse orientation so all orientations are consistent on the boundary of each v.p.-compressionbody. (That is, after also giving each component of $\boundary M \cap \boundary_- C$ a transverse orientation, each compressionbody $C$ is an oriented cobordism from $\boundary_- C$ to $\boundary_+ C$ or vice versa.) A \defn{flow line} for $\mc{H}$ is a non-constant oriented path in $M$ always intersecting $\mc{H}$ in the direction of the transverse orientation, transverse to and not disjoint from $\mc{H}$. The multiple v.p.-bridge surface $\mc{H}$ is \defn{oriented} if there are no closed flow lines.

\end{definition}

\begin{remark}
The transverse orientation induces a certain kind of handle decomposition of $(M,T)$, with handles as in Remark \ref{handle remark 1}. Similarly, each thick surface $H \cpt \mc{H}^+$ induces a Morse function on the component $M_0$ of $M \setminus \mc{H}$ containing it. This Morse function can be chosen so that it restricts to a Morse function on a subgraph of $M_0 \cap T$. The subgraph is the union of all components of $M_0 \cap T$ other than the ghost arcs on either side of $H$. The Morse functions corresponding to each component of $M \setminus \mc{H}$ can be pieced together to give a Morse-like function from $M$ to a certain graph, but we do not pursue this line of inquiry here.
\end{remark}

Just as a Heegaard surface for a 3-manifold can be stabilized and thus have higher genus than necessary, so a multiple v.p.-bridge surface may have thick surfaces that are higher genus or have more punctures than necessary. In \cite{PartI}, we defined a collection of destabilizing moves for multiple v.p.-bridge surfaces. These generalize the traditional notions of stabilization and $\boundary$-stabilization of Heegaard splittings of 3--manifolds. The types of destabilization for $H \cpt \mc{H}^+$  are as follows (see \cite{PartI} for precise definitions). All of these are called \defn{generalized destabilizations}.
\begin{itemize}
\item  (Destabilization) Compressing along a certain compressing disc for $H$ having boundary which is non-separating on $H$.
\item (Meridional Destabilization) Compressing along a certain cut disc for $H$ having boundary which is non-separating on $H$.
\item (Boundary Destabilization) Compressing along a certain separating compressing disc for $H$ and discarding a component of the resulting surface.
\item (Meridional Boundary Destabilization) Compressing along a certain separating cut disc for $H$ and discarding a component of the resulting surface.
\item (Ghost Boundary Destabilization) Compressing along a certain separating compressing disc for $H$ and discarding a component of the resulting surface.
\item (Ghost Meridional Boundary Destabilization) Compressing along a certain separating cut disc for $H$ and discarding a component of the resulting surface.
\end{itemize}

Meridional destabilization and meridional boundary destabilization are essentially the same, except that a cut disc plays the role of the compressing disc. A ghost (meridional) boundary destabilization is the same as (meridional) boundary destablization after removing an open regular neighborhood of a certain subgraph of $T$ from $(M,T)$. 

There are times when it is possible to isotope a component of $\mc{H}^+$ across a bridge disc so as to reduce the number of intersections between $\mc{H}$ and $T$ while still producing a multiple v.p.-bridge surface. Two are of special interest (see Figure \ref{Fig:unperturb}). These operations have shown up in other contexts (see, for example \cite{HS01, STo}).
\begin{itemize}
\item (Unperturbing) Isotope $H \cpt \mc{H}^+$ across a bridge disc $D$ which shares a single point of intersection with a bridge disc on the opposite side of $H$. The result of this isotopy is that the number of intersections of $H$ and $T$ is reduced by two.
\item (Undoing a removable arc) Isotope $H \cpt \mc{H}^+$ across a bridge disc $D$ that has a single point of intersection with a complete set of sc-discs on the other side of $H$ such that the point of intersection lies on a compressing or semi-compressing disc. The result of this isotopy is that the number of intersections of $H$ and $T$ is reduced by two.
\end{itemize}

\begin{figure}[ht]
\centering
\includegraphics[scale=.5]{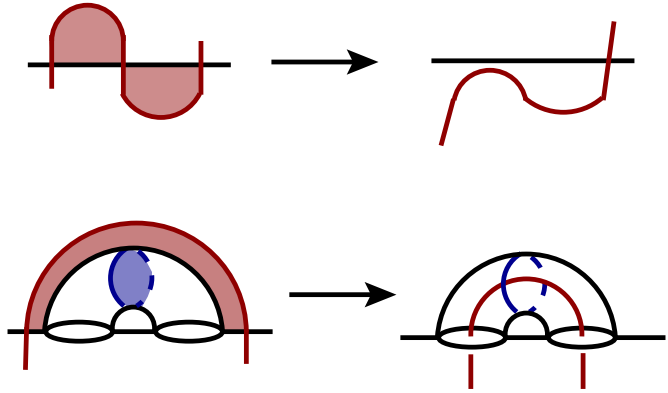}
\caption{The top row gives a schematic depiction of unperturbing. The bottom row gives a schematic depiction of removing a removable arc.}
\label{Fig:unperturb}
\end{figure}

\begin{definition}
Suppose that $\mc{H}$ is a multiple v.p.-bridge surface for $(M,T)$ and that some $(P, T_P) \cpt (M,T)\setminus \mc{H}$ is a trivial product compressionbody adjacent to $\mc{H}^-$ (rather than $\boundary M$). Then $\mc{H}' = \mc{H} \setminus \boundary P$ is obtained from $\mc{H}$ by a \defn{consolidation} (or by \defn{consolidating} $\mc{H}$.)
\end{definition} 

In \cite[Section 4]{PartI}, we show that the result of performing any of the generalized destabilizations, unperturbing, or removing a removable arc  is still a (oriented) multiple v.p.-bridge surface. We say that a multiple v.p.-bridge surface $\mc{H}$ is \defn{reduced} if it is impossible to perform a generalized destabilization or consolidation and if it is impossible to unperturb it or undo a removable arc. Let $\H(M,T)$ denote all reduced, oriented multiple v.p.-bridge surfaces, with two surfaces being equivalent if they are isotopic via an isotopy transverse to $T$.
 
There are two other ways of ``simplifying'' a multiple v.p.-bridge surface. In some ways, these play the most important role in the theory. They correspond to weak reduction of a Heegaard splitting (see \cite{CG, ST-thin, HS01}), but using sc-discs instead of compressing discs. For more detail and motivation, see \cite{PartI}.

\begin{definition}
Suppose that $\mc{H}$ is an oriented v.p.-bridge surface such that $H \cpt \mc{H}^+$ is sc-weakly reducible in $(M,T)\setminus \mc{H}^-$. Let $D_-$ and $D_+$ be disjoint sc-discs on opposite sides of $H$. Let $H_\pm$ be the result of compressing $H$ using $D_\pm$ and performing a small isotopy to the side of $H$ containing $D_\pm$. Let $F$ be the result of compressing $H$ using both $D_-$ and $D_+$. Let $\mc{J}^+ = (\mc{H}^+ \setminus H) \cup (H_- \cup H_+)$ and $\mc{J}^- = \mc{H}^- \cup F$. Then $\mc{J} = \mc{J}^+ \cup \mc{J}^-$ is obtained by \defn{untelescoping} $\mc{H}$. Figure \ref{Fig: untelescope} gives an example.
\end{definition}

\begin{center}
\begin{figure}[tbh]
\labellist \small\hair 2pt 
\pinlabel {$H$} [r] at 4 75
\pinlabel {$H+$} [r] at 360 150
\pinlabel {$H+$} [l] at 567 94
\pinlabel {$H-$} [r] at 325 48
\pinlabel {$H-$} [l] at 572 47
\pinlabel {$F$} [r] at 332 72
\pinlabel {$F$} [l] at 566 71
\pinlabel {$D_+$} [b] at 57 147
\pinlabel {$D_-$} [t] at 129 21
\endlabellist
\includegraphics[scale=0.5]{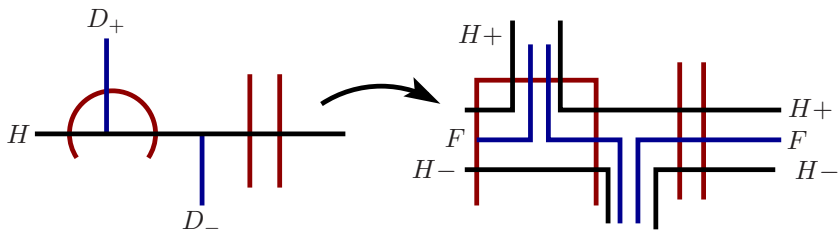}
\caption{Untelescoping $H$. The red curves are portions of $T$. The blue lines on the left are sc-discs for $H$. Note that if a semi-cut or cut disc is used then a ghost arc is created. This figure was reappropriated from \cite{PartI}.}
 \label{Fig:  untelescope}
\end{figure}
\end{center}

Corollary 5.9 of \cite{PartI} show that untelescoping an oriented multiple v.p.-bridge surface results in an oriented multiple v.p.-bridge surface. We need the following:

\begin{lemma}[Lemma 5.7 of \cite{PartI}]\label{lem:Sep weak reduction}
Suppose that  $H$ is a (oriented) multiple v.p.-bridge surface for $(M,T)$ with $(H_\down, T_\down)$ and $(H_\up, T_\up)$ the v.p.-compressionbodies of $(M,T)\setminus H$ on either side. Suppose that $D_\up$ and $D_\down$ are an sc-weak reducing pair. Let $H_- \subset H_\down$ and $H_+ \subset H_\up$ be the new thick surfaces created by untelescoping $H$. Let $F$ be the union of the new thin surfaces. Then the following are equivalent for a component $\Phi$ of $F$:
\begin{enumerate}
\item $\Phi$ is adjacent only to a remnant of $D_\up$ (or $D_\down$, respectively).
\item The disc $D_\up$ (or $D_\down$, respectively) is separating and $\Phi$ bounds a trivial product v.p.-compressionbody in $H_\up$ (or $H_\down$, respectively) with a component of $H_+$ (or $H_-$, respectively.)
\end{enumerate}
\end{lemma}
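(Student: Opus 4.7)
The plan is to describe the v.p.-compressionbody $V$ of the untelescoped bridge surface that lies in $H_\up$ between $H_+$ and $F$, and to use this description to characterize which components of $F$ are adjacent only to remnants of $D_\up$. The first step I would take is to set $\gamma_\up = \boundary D_\up$ and $\gamma_\down = \boundary D_\down$, viewed as disjoint simple closed curves on $H$, and observe that $V$ is obtained from a collar of the appropriate part of $F$ in $H_\up$ by attaching a single 2-handle dual to $D_\up$. The remnants of $D_\up$ in $F$ are then precisely the two cocores of this 2-handle, while the remnants of $D_\down$ in $F$ are the two disc caps produced by the surgery along $D_\down$ from the opposite side; these caps lie in $F$ but do not correspond to any 2-handle of $V$ since $D_\down$ sits on the $H_\down$ side.

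For the direction $(1)\Rightarrow(2)$, I would start with a component $\Phi$ of $F$ adjacent only to remnants of $D_\up$. The underlying subsurface of $H$ from which $\Phi$ is built must then miss a neighborhood of $\gamma_\down$. Combined with the fact that $\Phi$ is closed and contains at least one $D_\up$-cap, this forces $\gamma_\up$ to separate $H$, with $\Phi$ corresponding to the component of $H \setminus \gamma_\up$ disjoint from $\gamma_\down$, capped off by copies of $D_\up$. The analogous component $\Phi^+$ of $H_+$, obtained from the same subsurface of $H$ by the same capping and then pushed into $H_\up$, is isotopic to $\Phi$ through a collar in $H_\up$. I would then verify that $T$ restricted to this collar consists of vertical arcs only, so that the collar is a trivial product v.p.-compressionbody, establishing (2).

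For the direction $(2)\Rightarrow(1)$, I would use the fact that in a trivial product v.p.-compressionbody the two horizontal boundary components are homeomorphic relative to $T$. If $\Phi$ bounds such a product with $\Phi^+ \cpt H_+$, then $\Phi$ and $\Phi^+$ have the same pattern of $D_\up$- and $D_\down$-caps. Since $H_+$ is the result of compressing $H$ along $D_\up$ alone, $\Phi^+$ has no $D_\down$-caps, and thus neither does $\Phi$; moreover, $D_\up$ being separating together with the fact that $\Phi^+$ is a component of $H_+$ forces $\Phi^+$, and hence $\Phi$, to contain at least one $D_\up$-cap. Therefore $\Phi$ is adjacent only to remnants of $D_\up$.

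I expect the main obstacle to be the first direction, specifically verifying that the collar region between $\Phi$ and $\Phi^+$ is actually a trivial product v.p.-compressionbody rather than containing a hidden bridge arc, ghost arc, or core loop of $T$. This should follow from the fact that the push-off of $H_+$ into $H_\up$ can be taken to be arbitrarily small, together with transversality of $T$ to $H$, which forces the local picture near $H$ in this collar to consist purely of vertical strands of $T$.
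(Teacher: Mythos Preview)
The paper does not prove this lemma; it is quoted verbatim from \cite{PartI} (as Lemma~5.7 there), so there is no proof in the present paper against which to compare.

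That said, your description of the compressionbody $V$ between $H_+$ and $F$ has the handle on the wrong curve, and this matters for the converse direction. The surface $H_+$ is $H$ surgered along $\gamma_\up$, while $F$ is $H$ surgered along both $\gamma_\up$ and $\gamma_\down$; hence $F$ is obtained from $H_+$ by surgery along $\gamma_\down$. The cobordism $V$ from $H_+$ to $F$ therefore carries a single $2$--handle whose attaching curve is $\gamma_\down$ and whose cocore is (a parallel copy of) $D_\down$, not $D_\up$. The remnants of $D_\up$ in $F$ are passive in $V$ --- they already appear as caps in both $\boundary_+V=H_+$ and $\boundary_-V=F$ and lie in the product part of $V$ --- while it is the $D_\down$--caps that record the $2$--handle.

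Your argument for $(1)\Rightarrow(2)$ survives this correction, since you reason directly from the surgery description of $\Phi$ and never actually use the misidentified handle. But your argument for $(2)\Rightarrow(1)$ has a real gap: the assertion that a trivial product between $\Phi$ and $\Phi^+$ forces them to have ``the same pattern of $D_\up$-- and $D_\down$--caps'' does not follow from the bare fact that the two ends of a product are homeomorphic rel~$T$ --- the caps are extrinsic data attached to specific discs, not preserved by an abstract homeomorphism. The correct deduction uses precisely the handle structure you mis-stated: since $V$ is $(H_+\times I)$ with one $2$--handle along $\gamma_\down$, a component of $V$ is a trivial product v.p.-compressionbody if and only if that $2$--handle is not attached to its $\boundary_+$, i.e.\ if and only if $\gamma_\down \not\subset \Phi^+$. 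From this, together with $\gamma_\up$ separating, one reads off that $\Phi = \Phi^+$ is the component of $H\setminus\gamma_\up$ disjoint from $\gamma_\down$, capped only by $D_\up$, which is statement~(1). Once you flip the handle to $\gamma_\down$ and replace the ``same pattern of caps'' step with this observation, the outline is sound.
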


Suppose that $\mc{H} \in \H(M,T)$. An \defn{elementary thinning sequence} consists of the following operations, in order:
\begin{enumerate}
\item untelescoping a component of $\mc{H}^+$ to create an oriented multiple v.p.-bridge surface $\mc{H}_1$;
\item consolidating components of $\mc{H}_1 \setminus \mc{H}$ that cobound trivial product compressionbodies in $(M,T)\setminus \mc{H}_1$ to create $\mc{H}_2$
\item consolidating components of $\mc{H}_2^+ \setminus \mc{H}^+$ and components of $\mc{H}^- \subset \mc{H}_2^-$ that cobound trivial product compressionbodies in $(M,T)\setminus \mc{H}_2$ to create $\mc{H}' = \mc{H}_3$.
\end{enumerate}

See \cite[Figure 12]{PartI} for a schematic depiction of an elementary thinning sequence. It follows from \cite[Corollary 5.9]{PartI} that the result of applying an elementary thinning sequence to $\mc{H} \in \H(M,T)$ is an oriented multiple v.p.-bridge surface having the property that no consolidation is possible. It may, however, be possible to destabilize, unperturb, or undo a removable arc. 

\begin{definition}[Definition 6.15 of \cite{PartI}]
Suppose that $\mc{H} \in \H(M,T)$ is reduced and that $T$ is irreducible. An \defn{extended thinning move} applied to $\mc{H}$ consists of the following steps in the following order:
\begin{enumerate}
\item Perform an elementary thinning sequence
\item Destabilize, unperturb, and undo removable arcs until no generalized stabilizations, perturbations, or removable arcs remain
\item Perform as many consolidations as possible
\item Repeat (2) and (3) as much as possible.
\end{enumerate}
\end{definition}

In \cite{PartI} we define a certain complexity which decreases under each consolidation, elementary thinning sequence, destabilization, unperturbing, and undoing of a removable arc. This complexity ensures that steps (2), (3), and (4) are guaranteed to terminate and that there is no infinite sequence of extended thinning moves.

The result of applying an extended thinning move to $\mc{H} \in \H(M,T)$ is also an element of $\H(M,T)$. For $\mc{H}, \mc{K} \in \H(M,T)$, we say that $\mc{H}$ \defn{thins to} $\mc{K}$ and write $\mc{H} \more \mc{K}$ if there is a (possibly empty) sequence of extended thinning moves producing $\mc{K}$ from $\mc{H}$. If $\mc{H} \more \mc{K}$ implies that $\mc{H} = \mc{K}$ (equivalently, no extended thinning move can be applied to $\mc{H}$) then we say that $\mc{H}$ is \defn{locally thin}. We proved:

\begin{theorem}[Theorem 6.18 of \cite{PartI}]\label{partial order}
Suppose that $(M,T)$ is a 3-manifold graph pair satisfying the running assumption and with $T$ irreducible. Then $\more$ is a partial order on $\H(M,T)$ and for every $\mc{H} \in \H(M,T)$ there exists a locally thin $\mc{K} \in \H(M,T)$ such that $\mc{H} \more \mc{K}$.
\end{theorem}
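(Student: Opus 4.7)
The plan is to verify the three axioms of a partial order separately and then use well-foundedness of the complexity measure to extract a locally thin descendant.

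\textbf{Reflexivity and transitivity.} Reflexivity holds by the empty sequence of extended thinning moves. Transitivity is almost formal: if $\mc{H}\more\mc{K}$ via a sequence $\sigma_1$ of extended thinning moves and $\mc{K}\more\mc{L}$ via $\sigma_2$, then the concatenation $\sigma_1\sigma_2$ is a legitimate sequence producing $\mc{L}$ from $\mc{H}$, provided we know that the intermediate surface $\mc{K}$ is still a reduced, oriented multiple v.p.-bridge surface to which the moves of $\sigma_2$ can be applied. This is where we invoke \cite[Corollary 5.9]{PartI} together with the fact, emphasized just before the theorem, that the output of an extended thinning move belongs to $\H(M,T)$; hence nothing prevents concatenation.

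\textbf{Antisymmetry.} This is where the real work lies. First I would introduce, or import from \cite{PartI}, the complexity mentioned just before the theorem: a function $c\co\H(M,T)\to W$ valued in some well-ordered set $W$ (lexicographically ordered tuples of non-negative integers will suffice) with the property that $c$ strictly decreases under each of the five ``atomic'' operations: consolidation, destabilization (of any of the six types), unperturbing, undoing a removable arc, and the untelescoping/consolidation pair that constitutes an elementary thinning sequence. The delicate point is that an untelescoping \emph{by itself} increases the number of thick components, and therefore cannot be measured by a naive complexity such as $|\mc{H}^+|$ or $-\chi(\mc{H}^+)$; the correct complexity must weight thick vs.\ thin contributions (for instance, following \cite{ST-thin}, some function of the multiset of Euler characteristics, ordered so that splitting a thick surface along an sc-disc and pushing off a new thin component still drops the measure). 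Step~(2) of an elementary thinning sequence — consolidating the trivial product compressionbodies supplied by Lemma~\ref{lem:Sep weak reduction} — is what guarantees the needed drop in this refined complexity. Granting such a $c$, if both $\mc{H}\more\mc{K}$ and $\mc{K}\more\mc{H}$ hold with either move sequence nonempty, then concatenating would give $c(\mc{H})<c(\mc{H})$, a contradiction; so both sequences are empty and $\mc{H}=\mc{K}$.

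\textbf{Existence of locally thin descendants.} Starting from $\mc{H}\in\H(M,T)$, iteratively apply an extended thinning move whenever one is available. By the paragraph immediately preceding the theorem statement, the internal loops of an extended thinning move (steps (2)–(4) of its definition) terminate because $c$ strictly decreases at each destabilization, consolidation, unperturbation, and removable-arc undoing, so each extended thinning move is a well-defined single step. The same strict decrease of $c$ under the outer iteration, together with well-foundedness of $W$, forces the outer process to terminate. The resulting $\mc{K}\in\H(M,T)$ admits no further extended thinning move, i.e.\ $\mc{K}$ is locally thin, and by construction $\mc{H}\more\mc{K}$.

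\textbf{Main obstacle.} The central difficulty is calibrating the complexity $c$ so that it \emph{simultaneously} decreases under moves that pull in opposite directions: destabilizations and consolidations cut down on the number of thick surfaces or intersections with $T$, while untelescoping introduces new thin surfaces and splits a thick surface in two. Showing that the post-consolidation bookkeeping built into an elementary thinning sequence always reaches a net decrease — and carefully verifying this in the edge cases where the sc-discs $D_\pm$ are both non-separating, where Lemma~\ref{lem:Sep weak reduction} tells us no trivial product appears for consolidation — is what would occupy the bulk of the proof. Once that single complexity is in hand, the three partial-order properties and the descent to a locally thin $\mc{K}$ fall out cleanly.
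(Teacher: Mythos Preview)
The paper does not actually prove this theorem: it is quoted verbatim as Theorem~6.18 of \cite{PartI}, with no argument given here beyond the paragraph preceding the statement noting that a suitable complexity exists and decreases under each operation. So there is no proof in the present paper against which to compare your proposal.

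That said, your outline matches the architecture one expects (and which \cite{PartI} carries out): reflexivity and transitivity are formal, antisymmetry and termination both flow from a well-founded complexity $c$ that strictly decreases under every extended thinning move. You have correctly identified that the only substantive content is the construction of $c$, and you have correctly flagged the tension that untelescoping increases the naive counts while the subsequent consolidations must more than compensate. One small caution: your remark about the ``edge case where $D_\pm$ are both non-separating and no trivial product appears'' is slightly off --- in that case the new thin surface $F$ is connected and the two new thick surfaces $H_\pm$ are each connected and strictly simpler than $H$, so the complexity drop is actually \emph{easier} to see there, not harder; the trickier bookkeeping is when one of the discs is separating and a parallel thick/thin pair must be consolidated away. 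But this is a detail of the construction in \cite{PartI}, not of the present paper.
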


Locally thin multiple v.p.-bridge surfaces have some particularly nice properties:

\begin{theorem}[Properties of locally thin surfaces]\label{Properties Locally Thin}
Suppose that $(M,T)$ is a (3-manifold, graph) pair with $T$ irreducible. Let $\mc{K} \in \H(M,T)$ be locally thin. Then the following hold:
\begin{enumerate}
\item Each component of $\mc{K}^+$ is sc-strongly irreducible in $(M,T)\setminus\mc{K}^-$. 
\item Every component of $\mc{K}^-$ is c-essential in $(M,T)$.
\item If $(M,T)$ is irreducible and if $\mc{K}$ contains a 2-sphere disjoint from $T$, then $T = \nil$ and $M = S^3$ or $M = B^3$.
\item Suppose that $P \subset (M,T)$ is an essential sphere such that $|P \cap T| \leq 3$. Then some $F \cpt \mc{K}^-$ is an essential sphere with $|F \cap T| \leq |P \cap T|$ and $F$ can be obtained from $P$ by a sequence of isotopies and compressions using sc-discs.
\end{enumerate}
\end{theorem}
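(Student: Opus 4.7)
The four properties express that a locally thin $\mc{K}$ admits no further simplifying move. Parts (1)--(3) essentially reduce to the non-existence of a thinning move; part (4) is the substantive intersection-reduction argument.

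Part (1) is nearly tautological. If some $H \cpt \mc{K}^+$ were sc-weakly reducible in $(M,T)\setminus \mc{K}^-$ via a pair of disjoint sc-discs on opposite sides, the untelescoping along this pair initiates an elementary thinning sequence. Following it by all available consolidations, destabilizations, unperturbings, and removable-arc undoings (each of which must terminate, by the complexity on $\H(M,T)$ defined in \cite{PartI}) produces some $\mc{K}' \in \H(M,T)$ strictly below $\mc{K}$ in the order $\more$, contradicting local thinness.

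For part (2), suppose $F \cpt \mc{K}^-$ has a c-disc $D$. If $F$ were $\boundary$-parallel, an adjacent v.p.-compressionbody would be a trivial product, enabling a consolidation and contradicting that $\mc{K}$ is reduced; so $F$ is not $\boundary$-parallel. The disc $D$ lies in one of the two v.p.-compressionbodies $C$ adjacent to $F$; let $H = \boundary_+ C$, and choose a complete collection $\mc{D}$ of sc-discs for $H$ in $C$ minimizing $|D \cap \mc{D}|$. If this minimum is zero, $D$ descends to a c-disc for $F$ in one of the trivial compressionbodies produced by reducing $(C,T_C)$ along $\mc{D}$; but the negative boundary of a trivial compressionbody is c-incompressible, a contradiction. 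Otherwise, an innermost intersection of $D$ with $\mc{D}$ together with an outermost-arc surgery on the complete collection yields an sc-weak reducing pair for $H$ in $(M,T)\setminus \mc{K}^-$, contradicting (1). Part (3) follows quickly: a sphere $F \cpt \mc{K}^-$ disjoint from $T$ would be c-essential by (2), but in an irreducible $(M,T)$ such a sphere bounds a $3$-ball disjoint from $T$ and is therefore not essential by the paper's convention -- a contradiction. Hence $F \cpt \mc{K}^+$, and the running assumption (no boundary sphere meets $T$ in $\leq 2$ points) forces both adjacent v.p.-compressionbodies to be trivial ball compressionbodies disjoint from $T$; so $T = \nil$ and the ambient component is $S^3$ or $B^3$.

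Part (4) is the main work. Isotope $P$ transverse to $\mc{K}$, minimizing $|P \cap \mc{K}|$ among isotopies of $P$ through essential spheres meeting $T$ in at most the current number of points, and arrange the intersection to avoid $T$. Using c-essentiality of $\mc{K}^-$ from (2), any innermost disc of $P$ cut off by $\mc{K}^-$ bounds an isotopy disc in the exterior and can therefore be removed. An innermost disc of $P$ cut off by $\mc{K}^+$ is an sc-disc for some thick surface $H$; sc-strong irreducibility of $H$ from (1) forces the innermost discs from both sides to meet, and a standard outermost-arc analysis allows one to perform an actual sc-compression of $P$, replacing $P$ by a simpler essential sphere still meeting $T$ in at most $3$ points. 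Iterating, we eventually have $P$ disjoint from $\mc{K}^+$, hence lying in a v.p.-compressionbody; essentiality of $P$ together with the classification of such compressionbodies forces $P$ to be isotopic through a product region to a thin sphere $F \cpt \mc{K}^-$ with $|F \cap T| \leq |P \cap T|$, as required. The main obstacle is the bookkeeping in this last part: one must control how each sc-compression affects $|P \cap T|$ while simultaneously respecting every thick and thin surface. The hypothesis $|P \cap T| \leq 3$ is precisely what prevents cut-disc compressions from generating spheres with too many punctures to be simplified further.
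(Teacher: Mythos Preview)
The paper does not actually prove this theorem; immediately after the statement it simply writes ``See \cite[Theorem 7.6]{PartI} and \cite[Theorem 8.2]{PartI} for proofs.'' So there is no in-paper argument to compare against---the substantive work is deferred to the companion paper. Your sketch is in the right spirit for (1) and hits the correct high-level strategy for (2)--(4), but several steps are genuine gaps rather than routine omissions.

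In (2) you write ``the disc $D$ lies in one of the two v.p.-compressionbodies $C$ adjacent to $F$.'' This is not given: $D$ is a c-disc for $F$ in $(M,T)$, and its interior may a priori cross other components of $\mc{K}$. The standard Casson--Gordon style argument does not simply pick a complete collection in one compressionbody and surger; one must first control how $D$ meets $\mc{K}^+$ (using (1)) and the other thin surfaces, typically via an innermost/outermost argument on $D\cap\mc{K}$ together with a complexity that actually decreases. Your ``outermost-arc surgery \dots\ yields an sc-weak reducing pair for $H$'' is asserted, not shown: an innermost subdisc of $D$ cut off by $\mc{D}$ has boundary on $\mc{D}$, not on $H$, so it is not immediately an sc-disc for $H$ on the opposite side.

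In (3), the thick-sphere case does not follow just from the running assumption. If $H\cpt\mc{K}^+$ is a sphere disjoint from $T$ and the adjacent v.p.-compressionbody $(C,T_C)$ has $\partial_- C\neq\nil$, its negative boundary consists of spheres which (by what you have shown for $\mc{K}^-$, or by the running assumption for $\partial M$) meet $T$; hence $T_C$ contains ghost arcs. You then need an argument---e.g.\ producing an sc-weak reducing pair for $H$, or exhibiting a generalized destabilization---to derive a contradiction. Asserting ``both adjacent v.p.-compressionbodies are trivial ball compressionbodies'' skips this.

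In (4), the phrase ``a standard outermost-arc analysis allows one to perform an actual sc-compression of $P$'' is precisely where the content lives. You must explain why, after an sc-compression, at least one of the resulting spheres is still essential with at most $|P\cap T|$ punctures (this is where $|P\cap T|\le 3$ and irreducibility of $T$ are used), and why the process terminates. You also need the (easy but not stated) fact that an essential sphere in a v.p.-compressionbody is parallel to a component of $\partial_- C$. As written, the sketch names the right tools but does not carry out the reduction.
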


See \cite[Theorem 7.6]{PartI} and \cite[Theorem 8.2]{PartI} for proofs.

\section{Net extent, width, and thinning sequences}
\subsection{Effects of thinning}

In this section we show that net extent and width do not increase under thinning of an oriented multiple v.p.-bridge surface. 

\begin{lemma}\label{lem:Thinning invariance}
Suppose that $\mc{H}$ is an oriented multiple v.p.-bridge surface for $(M,T)$ and that $\mc{K}$ is obtained by an elementary thinning sequence from $\mc{H}$. Then $\netchi(\mc{K}) = \netchi(\mc{H})$ and $\netextent(\mc{K}) = \netextent(\mc{H})$. Furthermore, if the following hold:
\begin{itemize}
\item $T$ is irreducible, and
\item either at least one of discs in the weak reducing pair has boundary which separates $\mc{H}^+$ or the union of the boundaries of the discs in the weak-reducing pair is non-separating on $\mc{H}^+$
\end{itemize}
then $w(\mc{K}) \leq w(\mc{H})$. 
\end{lemma}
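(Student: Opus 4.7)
The plan is to track how each individual move in an elementary thinning sequence affects $\netchi$, $\netextent$, and $\width$. Suppose the untelescoping step is applied to $H \cpt \mc{H}^+$ using an sc-weak reducing pair $(D_+, D_-)$, producing new thick surfaces $H_\pm$ (compressions by $D_\pm$) and a new thin surface $F$ (compression by both). Write $\epsilon_\pm = |D_\pm \cap T| \in \{0,1\}$, $u = \epsilon_+ - 1$, $v = \epsilon_- - 1$, and $e = \extent(H)$. A direct count gives $\chi(H_\pm) = \chi(H)+2$, $\chi(F) = \chi(H)+4$, $|H_\pm \cap T| = |H \cap T| + 2\epsilon_\pm$, and $|F \cap T| = |H \cap T| + 2(\epsilon_+ + \epsilon_-)$. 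Summing over components (when $H_\pm$ or $F$ is disconnected) yields $\chi(H_+) + \chi(H_-) - \chi(H) = \chi(F)$ and the analogous identity for $|\cdot \cap T|$, hence $\extent(H_+) + \extent(H_-) - e = \extent(F)$. This forces $\netchi$ and $\netextent$ to be invariant under untelescoping. Consolidation removes a thick-thin pair cobounding a trivial product compressionbody; such a pair consists of isotopic surfaces of equal Euler characteristic and equal intersection with $T$, so consolidation likewise preserves both quantities, establishing the first assertions of the lemma.

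For $\width$, consolidation also preserves the sum-of-squares because the two removed surfaces contribute with opposite signs and equal $\extent^2$. The total change in $\width$ therefore equals the contribution from the untelescoping step alone, and I would analyze it by cases on the separating behavior of $\boundary D_\pm$ on $H$. If $\boundary D_+ \cup \boundary D_-$ is non-separating on $H$, then each of $H_+, H_-, F$ is connected with extents $e+u$, $e+v$, $e+u+v$, and a direct expansion yields $\width(\mc{K}) - \width(\mc{H}) = -4uv \le 0$. If instead at least one of $\boundary D_\pm$ separates $H$, Lemma~\ref{lem:Sep weak reduction} identifies components of $F$ that cobound trivial products with components of $H_\pm$; these pairs cancel in the sum-of-squares since consolidated partners have equal extent. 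Denoting the extents of the surviving (non-consolidated) components of $H_+$ and $H_-$ by $\alpha$ and $\beta$, the extent identity forces the surviving thin component to have extent $\alpha + \beta - e$, and a short algebraic expansion gives $\width(\mc{K}) - \width(\mc{H}) = -4(e-\alpha)(e-\beta)$ in the ``both separating'' sub-case and $4v(e-\alpha)$ in the mixed sub-case where only $\boundary D_+$ separates.

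It remains to show $(e-\alpha) \ge 0$ and $(e-\beta) \ge 0$. Writing $\mu$ for the extent of the thick component of $H_+$ consolidated with its partner in $F$, one has $\alpha + \mu = e + u$, i.e.\ $e - \alpha = \mu - u$. If $D_+$ is a compressing disc then $u = -1$ and $\mu \ge -1$ by the general lower bound on extent; if $D_+$ is a cut disc then $u = 0$ and the consolidated partner inherits at least one intersection with $T$ from the cut disc, so by irreducibility of $T$ (which forbids spheres meeting $T$ in exactly one point) together with the observation $\extent(S) \ge 0$ for every non-sphere component, one obtains $\mu \ge 0$. Either way $\mu \ge u$, giving $(e-\alpha) \ge 0$; symmetric reasoning for $\beta$ yields $(e-\beta) \ge 0$. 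Combining, $\width(\mc{K}) \le \width(\mc{H})$ in each case permitted by the hypothesis. The excluded case (both $\boundary D_\pm$ non-separating but their union separating) is genuinely bad: there $F$ splits into two components with extents summing to $e+u+v$ but otherwise independent, Lemma~\ref{lem:Sep weak reduction} yields no consolidation, and the width change $-4uv + 4\extent(F_1)\extent(F_2)$ can be positive.

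The main obstacle is the bookkeeping in the ``at least one separating'' case: carefully matching components of $F$ to components of $H_\pm$ via Lemma~\ref{lem:Sep weak reduction}, and correctly applying the irreducibility of $T$ together with the sphere-intersection count to control the sign of $\mu - u$ (and the symmetric quantity for $D_-$). Once these case computations are in hand, the overall inequality follows by chaining them with the width-preservation of consolidation steps (2) and (3) of the elementary thinning sequence.
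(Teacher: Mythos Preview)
Your proposal is correct and follows essentially the same approach as the paper: track the untelescoping step, use the extent identity $\extent(H_+)+\extent(H_-)-\extent(F)=\extent(H)$ for $\netchi$ and $\netextent$, and for $\width$ reduce the change to a product of two factors whose signs are controlled via the irreducibility of $T$ and Lemma~\ref{lem:Sep weak reduction}. The only organizational difference is that you split into the three sub-cases (union non-separating; one separating; both separating) and compute the width change explicitly in each, whereas the paper treats them uniformly by writing $x'_\pm,x''_\pm$ for the surviving and consolidated components (with $x''_\pm$ implicitly zero when $\partial D_\pm$ is non-separating) and obtains the single formula $-2\big((j-1)-x''_-\big)\big((i-1)-x''_+\big)$; your factors $(e-\alpha)=\mu_+-u$ and $(e-\beta)=\mu_--v$ are exactly these quantities in different notation, and your sign argument via irreducibility is the same as theirs.
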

\begin{proof}
The statement for net euler characteristic is similar to that of \cite[Lemma 2]{ScharlemannSchultens-Tunnel}; it is easily verified by examining the definition of elementary thinning sequence.  We take on the proof that net extent does not change and that width does not increase under an elementary thinning sequence. Observe that consolidation will never change net euler characteristic, net extent, or width.

Let $H \subset \mc{H}^+$ be the thick surface which is untelescoped using a weak reducing pair $\{D_-, D_+\}$. Let $i = |D_+ \cap T|$ and $j = |D_- \cap T|$ (so $i, j \in \{0,1\}$). Let $\mc{H}_1$ be the surface obtained by untelescoping. The surface $\mc{K}$ is obtained from $\mc{H}_1$ by consolidations so $\netextent(\mc{H}_1) = \netextent(\mc{K})$ and  $w(\mc{H}_1) = w(\mc{K})$. It suffices to show then that $\netextent(\mc{H}_1) = \netextent(\mc{H})$ and $w(\mc{H}_1) \leq w(\mc{H})$. 

Let $H_+$ be the union of the components $\mc{H}_1^+$ resulting from compressing $H$ using $D_+$ (there are at most two such components.)  Let $H_-$ be the union of the thick surfaces in $\mc{H}_1^+$ which result from compressing $H$ using $D_\down$.  Let $F$ be the union of the new thin surfaces (i.e. the components of $\mc{H}_1^- \setminus \mc{H}^-$.) We have
\[
\begin{array}{rcl}
\extent(H_+) &=& \extent(H) + i - 1 \\
\extent(H_-) &=& \extent(H) + j - 1 \\
\extent(F) &=& \extent(H) + i + j - 2.
\end{array}
\]
Consequently,
\[
\extent(H_+) + \extent(H_-) - \extent(F) = \extent(H),
\]
and so $\netextent(\mc{H}_1) = \netextent(\mc{H})$. Since extent is linear over components, we have $\netextent(\mc{H}) = \netextent(\mc{K})$.

We need to exert more care with width. Assume, therefore, the two additional hypotheses in the statement of the lemma. The second new hypothesis guarantees that $F = \mc{K}^- \setminus \mc{H}^-$ is connected (Lemma \ref{lem:Sep weak reduction}). Let $H'_+$, $H'_-$, and $F'$ be the components of $H_\pm \cap \mc{K}$ and $F \cap \mc{K}$ respectively. Let $H''_\pm$ be a component of $H_\pm$ which is consolidated with a component $F''_\pm$ of $F$.

Let $x = \extent(H)$. Let $x'_\pm$ and $x''_\pm$ be the extents of $H'_\pm$ and $H''_\pm$ respectively. Let $y$ be the extent of $F'$.  Note that the extents of the components of $F$ which are consolidated are exactly $x''_+$ and $x''_-$. Then
\[
\begin{array}{rcl}
x'_+  &=& x + i - 1 - x''_+ \\
x'_- &=& x + j -  1 - x''_- \\
y &=& x + i + j - 2 - x''_+ - x''_- \\
\end{array}
\]
Algebra then shows that
\[
(x'_+)^2 + (x'_-)^2  - y^2 =  x^2 - 2\big((j-1) - x''_-\big)\big((i-1) - x''_+\big).
\]

Thus, \[\frac{1}{2}\left(\width(\mc{K}) - \width(\mc{H})\right) = - 2\big((j-1) - x''_-\big)\big((i-1) - x''_+\big).\] This is non-positive, as desired, unless exactly one of $\big((j-1) - x''_-\big)$ or $\big((i-1) - x''_+\big)$ is positive and the other is negative. Without loss of generality, suppose
\[
x_-'' < j - 1 \in \{-1, 0\}.
\]
Since $T$ is irreducible, and since $H''_-$ is connected, by the definition of extent, $H''_-$ is a sphere disjoint from $T$. Hence, $x''_- = -1$ and $j = 0$. Thus,
\[
\frac{1}{2}\left(\width(\mc{K}) - \width(\mc{H})\right) = - 2\big(-1 + 1 )\big((i-1) - x''_+\big) \leq 0,
\]
as desired.
\end{proof}

\begin{remark}
The reason for the second additional assumption for the result on width in Lemma \ref{lem:Thinning invariance} is due to the fact that if (using the notation from the proof) $F'$ is disconnected, then the difference $\width(\mc{K}) - \width(\mc{H})$ is given by 
\[((x'_+)^2 + (x'_-)^2  - y_1^2 - y_2^2)- x^2\]
 where $y_1$ and $y_2$ are the extents of the components of $F'$ instead of $((x'_+)^2 + (x'_-)^2  - y^2)- x^2$. That distinction is enough to make the proof not go through in the case when $F'$ is disconnected.
\end{remark}

For convenience, if $\mc{H}$ is a reduced multiple v.p.-bridge surface for $(M,T)$ we define the \defn{width hypothesis} to be all three of the following assumptions:
\begin{enumerate}
\item[(W1)] $T$ is irreducible
\item[(W2)] Either each component of $\mc{H}^+$ has genus at most 2 or every closed surface in $M$ separates
\item[(W3)] Every sphere in $M$ separates
\end{enumerate}

Of course, we continue to employ the ``running assumption,'' without remarking on it.

\begin{corollary}\label{thm:Thinning invariance}
Suppose that $\mc{H}, \mc{K} \in \H(M,T)$ and that $\mc{H} \more \mc{K}$. Then $\netchi(\mc{K}) \leq \netchi(\mc{H})$ and $\netextent(\mc{K}) \leq \netextent(\mc{H})$. Additionally, if the width hypothesis holds for $\mc{H}$, then $\width(\mc{K}) \leq \width(\mc{H})$.
\end{corollary}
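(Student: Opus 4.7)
I would induct on the length of the sequence of extended thinning moves relating $\mc{H}$ to $\mc{K}$. The base case ($\mc{H} = \mc{K}$) is trivial, and the inductive step reduces to proving the inequalities after a single extended thinning move. By definition, each extended thinning move is a finite composition of four kinds of atomic operations: consolidations, elementary thinning sequences, generalized destabilizations, and the operations of unperturbing or undoing a removable arc. It therefore suffices to show that each atomic operation weakly decreases $\netchi$, $\netextent$, and, under the width hypothesis, $\width$.

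Next I would dispatch the atomic operations. A consolidation removes a thin surface and a thick surface that cobound a trivial product compressionbody and are therefore homeomorphic, so their contributions cancel in all three alternating sums and the three invariants are unchanged. Elementary thinning sequences are the content of Lemma \ref{lem:Thinning invariance}. For each of the six types of generalized destabilization and for both unperturbing and undoing a removable arc, a thick component $H$ is replaced either by the result of compressing along an sc-disc (and possibly discarding a component) or by the result of an isotopy across a bridge disc that reduces $|H\cap T|$ by two without changing the genus. In every case a direct Euler-characteristic computation, mirroring the one at the end of the proof of Lemma \ref{lem:Thinning invariance}, gives
\[
-\chi(H') \le -\chi(H), \qquad \extent(H') \le \extent(H), \qquad \extent(H')^2 \le \extent(H)^2,
\]
where the last inequality uses $\extent(H) \ge 1$, which holds whenever the corresponding move is actually applicable. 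Summing over components of $\mc{H}^+$ (and noting that thin surfaces are unchanged, except in the ``boundary'' destabilizations where a trivial contribution is removed) yields the required inequalities.

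To carry the width statement through the induction, I would check that the width hypothesis is preserved by each atomic operation, so that it continues to hold at every intermediate surface and Lemma \ref{lem:Thinning invariance} applies at every elementary thinning sequence encountered. Conditions (W1) and (W3) do not refer to the bridge surface and are automatic. Condition (W2) is preserved because no atomic operation raises the genus of any thick component: sc-disc compressions weakly decrease genus, and unperturbing or undoing a removable arc leaves it unchanged; consolidations simply remove a component.

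The main obstacle is verifying that the auxiliary hypothesis in Lemma \ref{lem:Thinning invariance} (either one of $\partial D_\pm$ separates $H$, or $\partial D_+\cup\partial D_-$ is non-separating on $H$) is automatic under the width hypothesis. In the clause of (W2) in which every closed surface in $M$ separates, I would argue by contradiction: if $\partial D_\pm$ were both non-separating on $H$ while $\partial D_+\cup\partial D_-$ separated $H$, then capping off a component of $H\setminus(\partial D_+\cup\partial D_-)$ with the discs $D_+$ and $D_-$ would produce a closed surface in $M$ violating the separation property. In the other branch of (W2), where each thick component has genus at most $2$, the bad configuration can be dispatched by direct analysis of pairs of disjoint simple closed curves on such surfaces: on a torus the two curves are parallel, so $D_+\cup A\cup D_-$ is a sphere in $M$ excluded by (W3); on a genus-$2$ surface the only remaining configuration is an annulus complement, handled by the same capping-off trick together with (W3).
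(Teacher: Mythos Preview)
Your plan is essentially the paper's own proof: reduce to single atomic operations, note that consolidations, generalized destabilizations, unperturbing, and undoing removable arcs are each ``easily verified'' not to increase any of the three quantities, invoke Lemma~\ref{lem:Thinning invariance} for elementary thinning sequences, check that the width hypothesis propagates along the sequence, and then show that the width hypothesis forces the second bullet of Lemma~\ref{lem:Thinning invariance} via the same case split on (W2). The only cosmetic difference is in the ``every closed surface separates'' branch: you cap a component of $H\setminus(\partial D_+\cup\partial D_-)$ with $D_+$ and $D_-$ and assert the resulting closed surface is non-separating, whereas the paper instead builds an explicit dual loop by running arcs through the two adjacent compressionbodies (disjoint from $D_\pm$) and closing up across $H$---these are two sides of the same obstruction, and the paper's loop is precisely what certifies that your capped surface is non-separating.
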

\begin{proof}
It is easily verified that consolidation, destabilization of all kinds, as well as unperturbing and eliminating a removable arc do not increase net euler characteristic, net extent, or width. By Lemma \ref{lem:Thinning invariance}, therefore, a thinning sequence does not increase net extent.

Assume, therefore, that the width hypothesis holds for $\mc{H}$. Observe that consolidation, elementary thinning sequences, destabilization of all kinds, as well as unperturbing and eliminating a removable arc do not change these properties. We will show that the width hypothesis implies that $\mc{H}$ satisfies the hypothesis in the second bullet point of the statement of Lemma \ref{lem:Thinning invariance}. It then follows that $\width(\mc{K}) \leq \width(\mc{H})$.

Suppose that $D_-$ and $D_+$ are a weak reducing pair for $H \cpt \mc{H}^+$. If one of $D_-$ or $D_+$ has boundary which separates $H$, then we are done. Assume, therefore, that both $\boundary D_-$ and $\boundary D_+$ are non-separating on $H$. This implies $H$ is not a sphere. By (W2), either $g(H) \leq 2$ or every closed surface in $M$ separates. 

Assume, first, that $g(H) \leq 2$. We already know that $g(H) \neq 0$. If $g(H) = 1$, then $H$ is a torus and $\boundary D_-$ and $\boundary D_+$ are parallel curves on $H$ (ignoring $T \cap H$.) Thus, $M$ contains a non-separating sphere, contradicting (W3). Hence $g(H) = 2$.

Since $\boundary D_-$ is non-separating on the genus 2 surface $H$, the surface $H' = H \setminus \boundary D_-$ is a genus one surface with two boundary components. If $\boundary D_+$ is non-separating on $H'$, then $\boundary D_- \cup \boundary D_+$ is non-separating on $H$, and we are done. Thus, we may assume that $\boundary D_+$ separates $H$. Together with components of $\boundary H'$, the curve $\boundary D_+$ must either bound a disc, a pair-of-pants, or an annulus in $H'$. Since $\boundary D_+$ is non-separating on $H$, we can rule out the first two possibilities. The third possibility implies that $\boundary D_+$ is parallel in $H$ to $\boundary D_-$ and so again $M$ contains a non-separating sphere, a contradicting (W3). Thus, the conclusion holds if $g(H) \leq 2$.

Suppose, therefore, that no closed surface in $M$ separates and that $\boundary D_-$ and $\boundary D_+$ are both non-separating on $H$, but $\boundary D_- \cup \boundary D_+$ is separating. Let $F_1$ and $F_2$ be the two components of $H \setminus (\boundary D_- \cup \boundary D_+)$. Since $D_-$ is non-separating in the v.p.-compressionbody $H_\down$ below $H$, there exists a properly-embedded arc $\psi_-$ in $H_-$ joining $F_1$ to $F_2$ which is disjoint from $D_-$. Likewise, there is a properly-embedded arc in the v.p.-compressionbody $H_\up$ above $H$ which joins $F_1$ and $F_2$ and is disjoint from $D_+$. Since $F_1$ and $F_2$ are each path-connected, without loss of generality, the endpoints of $\psi_-$ and $\psi_+$ coincide. Then $\psi_- \cup \psi_+$ is a loop intersecting each of $F_1$ and $F_2$ exactly once. Thus,  each of the components of the thin surface obtained by untelescoping $H$ using $D_-$ and $D_+$ are non-separating, a contradiction. Thus, the conclusion holds in this case also.
\end{proof}

\section{Minimality Results}

In this section we show that both net extent and width (at least under the width hypothesis) are non-negative and detect the unknot. The results and techniques of this section are often applicable more generally - for instance in studying links or graphs of small net extent. 

We begin by confirming that the net euler characteristic of a multiple v.p.-bridge surface provides an upper bound on the negative euler characteristic of its components. We use the fact that generalized Heegaard splittings (in the sense of Scharlemann-Thompson \cite{ST-thin}) can be amalgamated to create a Heegaard surface, a result due to Schultens \cite{Schultens}. We defer to Schultens' paper for a precise definition of amalgamation. (See also \cite{Lackenby}.)

\begin{lemma}\label{lem:Component bound}
Assume that $\mc{H}\in \H(M, T)$.  If $\netchi(\mc{H}) = x \in \Z$, then every component $S \cpt \mc{H}$ has $-\chi(S) \leq x$.
\end{lemma}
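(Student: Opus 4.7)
My plan is to amalgamate $\mc{H}$ all the way down to a single v.p.-bridge surface $\Sigma$, whose $-\chi$ equals $x$, and then control each original component of $\mc{H}$ by comparison with $\Sigma$. First I would carry out a single amalgamation step across a thin $F \cpt \mc{H}^-$. The two v.p.-compressionbodies $C_1, C_2$ of $(M,T) \setminus \mc{H}$ sharing $F$ have positive boundaries $H_1, H_2 \cpt \mc{H}^+$, and following \cite{Schultens}, amalgamating across $F$ replaces $C_1, C_2$ by a single v.p.-compressionbody whose positive boundary is a new thick surface $H$. Using the handle-theoretic description from Remark \ref{handle remark 1}, the standard Euler-characteristic formula
\[
-\chi(H) \;=\; -\chi(H_1) + \bigl(-\chi(H_2)\bigr) - \bigl(-\chi(F)\bigr)
\]
should hold, and so $\netchi$ is invariant under a single amalgamation.

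The crux is the monotonicity $-\chi(H) \geq \max\!\bigl(-\chi(H_1),\,-\chi(H_2)\bigr)$, which by the formula is equivalent to $-\chi(F) \leq \min\!\bigl(-\chi(H_1),\,-\chi(H_2)\bigr)$. I will deduce this from the general compressionbody inequality: in any v.p.-compressionbody $(C, T_C)$, every component $\Phi \cpt \boundary_- C$ satisfies $g(\Phi) \leq g(\boundary_+ C)$. Cutting $(C, T_C)$ along a complete collection of sc-discs yields trivial compressionbodies; reversing this decomposition builds $\boundary_+ C$ from the disjoint union of the $\boundary_+$ of the trivial pieces (which contains a copy of each $\Phi \cpt \boundary_- C$ together with extra spheres from ball pieces) by attaching 1-handles, some of whose cores lie in $T$. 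Each such 1-handle either connects two components, summing their genera, or adds a handle to a single component; since the final $\boundary_+ C$ is connected, its genus is at least that of any individual $\Phi$.

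Combining these two ingredients, iterated amalgamation across every component of $\mc{H}^-$ produces (assuming the connected case, which is standard for the paper's running context) a single v.p.-bridge surface $\Sigma$ with $-\chi(\Sigma) = \netchi(\mc{H}) = x$, and monotonicity at each step gives, by induction on the number of amalgamation steps, $-\chi(H_i) \leq -\chi(\Sigma) = x$ for every $H_i \cpt \mc{H}^+$. For a thin component $F \cpt \mc{H}^-$, the compressionbody inequality applied to either adjacent v.p.-compressionbody produces a thick $H$ with $-\chi(F) \leq -\chi(H) \leq x$. The main obstacle I anticipate is confirming that Schultens' amalgamation transfers cleanly to the v.p.-setting in the presence of the graph $T$: one must check that sc-discs (including cut and semi-cut types, each meeting $T$ in at most one point) and the 1-handles carrying edges of $T$ as cores contribute to Euler characteristic and genus in the standard way, so that the $-\chi$ formula and the genus inequality $g(\Phi) \leq g(\boundary_+ C)$ both survive intact. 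This should be a routine handle-theoretic bookkeeping check, since graph-edge cores add punctures but not genus.
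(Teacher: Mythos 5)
Your overall strategy -- amalgamate, keep track of $\netchi$, and control surfaces with the compressionbody inequality -- is the same skeleton as the paper's proof, and the obstacle you flag (transporting Schultens' amalgamation to the v.p.\ setting with $T$ present) is not the real issue: since neither $\netchi(\mc{H})$ nor $-\chi(S)$ sees $T$, you may simply ignore $T$ from the start, as the paper does; then every v.p.-compressionbody is an ordinary compressionbody and your inequality $g(\Phi)\leq g(\bdd_+C)$ for $\Phi \cpt \bdd_-C$ is standard. The genuine gap is in the iterated pairwise amalgamation itself. Your per-step identity $-\chi(H) = -\chi(H_1)-\chi(H_2)+\chi(F)$ and the monotonicity deduced from it presuppose that at every stage the thin surface being amalgamated has two \emph{distinct} chunks, hence two distinct thick surfaces, on its two sides -- in other words that the graph dual to $\mc{H}$ is a tree. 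Elements of $\H(M,T)$ need not satisfy this: the orientation condition only rules out \emph{directed} cycles of flow lines, and the paper's own example in $S^1\times S^2$ (Figure \ref{fig:Netext0ce}) has non-separating thin spheres, so its dual graph is a cycle. Iterating pairwise amalgamation on such an $\mc{H}$ forces, at some stage, an amalgamation across a thin surface $F$ whose two sides lie in the same already-amalgamated chunk; there is then only one thick surface $H$ involved, your formula has no meaning, and your two per-step claims become jointly inconsistent unless $\chi(F)\geq 0$: if $\netchi$ were preserved the new thick surface would have $-\chi = -\chi(H)+\chi(F)$, which contradicts monotonicity whenever $F$ has genus at least $2$. (Indeed the natural self-amalgamation, e.g.\ closing up the two ends of a splitting of $\Sigma_g\times I$ inside $\Sigma_g\times S^1$, increases $\netchi$.) Nothing in the definition of $\H(M,T)$ prevents non-separating thin surfaces of positive genus, so the terminal identity $-\chi(\Sigma)=\netchi(\mc{H})=x$, on which your whole induction rests, is unjustified; the parenthetical ``assuming the connected case'' does not address this.

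For comparison, the paper avoids per-step bookkeeping: it ignores $T$, caps spherical boundary components, consolidates parallel thick/thin pairs, amalgamates the resulting $\mc{J}$ in one step to a Heegaard surface $J$ with $-\chi(J)=x$, bounds every component of $\mc{J}$ because each is a component of a surface obtained from $J$ by compressions, and then handles the consolidated pairs by exactly your inequality $-\chi(F)\leq-\chi(\bdd_+C)$. If you want to salvage your stepwise version, you should either amalgamate only along a maximal forest of thin surfaces and treat the remaining cycle-closing thin surfaces by a separate argument (this is where the orientedness of $\mc{H}$ must enter), or else verify the $\netchi$ bookkeeping for self-amalgamations explicitly; as written, the induction breaks precisely at those steps.
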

\begin{proof}
As $\netchi(\mc{H})$ is computed without taking $T$ into account, we may ignore $T$ for the purposes of this proof. Cap off all 2-sphere boundary components of $\boundary M$ with 3-balls, and consolidate parallel thick and thin surfaces in $\mc{H}$ as much as possible to obtain a multiple v.p.-bridge surface $\mc{J}$ for $M$. Observe that $\netchi(\mc{J}) = \netchi(\mc{H}) = x$.  Since we are ignoring $T$, we may amalgamate  $\mc{J}$ to a Heegaard surface $J$ for $M$. It is straightforward to verify that $-\chi(J) = x$. Each component of $\mc{J}$ is the component of a surface $J'$ obtained by a sequence of compressions of  $J$. Thus, $-\chi(J') \leq x$. Hence, any component  $S$ of $\mc{H}$ that is not consolidated away in the creation of $\mc{J}$ has $-\chi(S) \leq x$. Now reconstruct $\mc{H}$ from $\mc{J}$ by inserting in parallel thick and thin surfaces. Suppose that $F_1$ and $H_1$ are the first pair of parallel thick and thin surfaces inserted into $M \setminus \mc{J}$ (i.e. the last pair consolidated from $\mc{H}$). Let $H$ be the component of $\mc{J}^+$ which is adjacent to $F_1$ in $M \setminus (\mc{J} \cup H_1 \cup F_1)$. By the definition of v.p.-compressionbody, $-\chi(H) \geq -\chi(F_1)$, since there is a v.p.-compressionbody such that $H = \boundary_+ C$ and $F_1 \subset \boundary_- C$. Consequently, $-\chi(H_1) = -\chi(F_1) \leq -\chi(H) \leq x$. Proceeding inductively, we show that reinserting all pairs of parallel thin and thick surfaces we see that every component $S$ of $\mc{H}$ has $-\chi(S) \leq x$.
\end{proof}

\subsection{Compressionbodies}
In this subsection, we determine various inequalities for v.p.-compressionbodies. In future sections we will assemble these to study net extent and width.

For a v.p.-compressionbody $(C, T_C)$ with $T_C$ a 1--manifold, define
\[
\delta(C, T_C) = \extent(\boundary_+ C) - \extent(\boundary_- C)
\]
Note that $\delta(B^3, \nil) = - 1$ and if $(C, T_C)$ is any other trivial compressionbody or $(S^1 \times D^2, \nil)$ or $(S^1 \times D^2, \text{ core loop})$ then $\delta(C, T_C) = 0$.

\begin{lemma}\label{Lem: extent difference}
Suppose that $(C, T_C)$ is a v.p.-compressionbody other than $(B^3, \nil)$ or $(S^1 \times D^2, \nil)$. Assume $T_C$ is a 1--manifold not intersecting any spherical component of $\boundary_- C$ exactly once. Then $\delta(C, T_C) \geq 0$ and if $\delta(C, T_C) = 0$ then there is no compressing or semi-compressing disc for $\boundary_+ C$ in $(C, T_C)$.
\end{lemma}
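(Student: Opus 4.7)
The plan is to induct on $|\mc{D}|$, where $\mc{D}$ is a complete collection of sc-discs for $(C, T_C)$, tracking how $\extent(\partial_+) - \extent(\partial_-)$ behaves under a single $\partial$-reduction. The central computation is that $\partial$-reducing $(C, T_C)$ along an sc-disc $D$ raises $\chi(\partial_+)$ by $2$, while $|T \cap \partial_+|$ increases by $0$ or $2$ according to whether $D$ is compressing/semi-compressing or cut/semi-cut (in the latter pair, the two new endpoints of the cut $T$ appear as punctures on the two disc copies of $D$). Writing $\partial_+'$ for the resulting surface and $\epsilon_D \in \{0,1\}$ for this dichotomy, one checks $\extent(\partial_+') = \extent(\partial_+) - \epsilon_D$. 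Since $\partial_-$ is unchanged and the reduction produces v.p.-compressionbody pieces $(C_i', T_i')$ by Lemma \ref{Lem: Invariance}(2), this yields
\[
\delta(C, T_C) \;=\; \sum_i \delta(C_i', T_i') \;+\; \epsilon_D.
\]

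\textbf{Base case.} For $|\mc{D}| = 0$, Lemma \ref{Lem: Invariance}(1) says $(C, T_C)$ is trivial and has no sc-discs. With $T_C$ a $1$-manifold and $(C, T_C) \neq (B^3, \nil)$, the only possibilities are a trivial product compressionbody with $T_C$ vertical arcs, or a trivial ball compressionbody with $T_C$ a single $\partial$-parallel arc; both give $\delta(C, T_C) = 0$ by direct computation.

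\textbf{Inductive step.} Pick any $D \in \mc{D}$ and reduce; the pieces inherit the running assumption on $\partial_-$ (which is unaltered), and each carries a complete collection inherited from $\mc{D} \setminus \{D\}$. We analyze by type of $D$.

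\emph{(i) $D$ is cut or semi-cut.} Then $\epsilon_D = 0$, and each disc copy of $D$ carries a cut endpoint of $T$, so $T_i' \neq \nil$ in every piece. This rules out $(C_i', T_i') \in \{(B^3, \nil), (S^1 \times D^2, \nil)\}$, and by induction each $\delta(C_i', T_i') \geq 0$; hence $\delta(C, T_C) \geq 0$.

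\emph{(ii) $D$ is compressing or semi-compressing.} Then $\epsilon_D = 1$. The claim is that no piece $(C_i', T_i')$ equals $(B^3, \nil)$. If some piece were $(B^3, \nil)$, its boundary $2$-sphere would decompose as a disc copy of $D$ together with a disc $E \subset \partial_+ C$ satisfying $\partial E = \partial D$ and $E \cap T = \nil$, and the bounded $3$-ball would be disjoint from $T$. In the separating sub-case this directly violates condition (3) of the definition of sc-disc. In the non-separating sub-case, regluing the two copies of $D$ shows $(C, T_C) = (S^1 \times D^2, \nil)$, also excluded by hypothesis. A piece equal to $(S^1 \times D^2, \nil)$ contributes $\delta = 0$ and is harmless, so by induction $\sum_i \delta(C_i', T_i') \geq 0$, and consequently $\delta(C, T_C) \geq 1$.

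\textbf{Vanishing statement.} The last assertion follows immediately: if $\delta(C, T_C) = 0$ and there were a compressing or semi-compressing disc $D$ for $\partial_+ C$, then by Lemma \ref{Lem: Invariance}(2) we could extend $D$ to a complete collection and apply case (ii), obtaining $\delta(C, T_C) \geq 1$, a contradiction.

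\textbf{Main obstacle.} The delicate point is case (ii): ruling out $(B^3, \nil)$ as a reduction piece. One must identify the complementary disc $E \subset \partial_+ C$ as embedded and $T$-free (using $T_i' = \nil$ and that the $B^3$-piece is disjoint from $T$) so as to trigger the sc-disc condition, and separately handle the non-separating sub-case where the exclusion of $(S^1 \times D^2, \nil)$ is essential. Everything else is routine bookkeeping on how $\extent$ transforms under the four kinds of sc-discs.
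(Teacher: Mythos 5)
Your overall strategy is sound and is genuinely different in organization from the paper's. The paper argues in one shot over a full complete collection $\Delta$: with $a$ the number of compressing and semi-compressing discs in $\Delta$ and $b$ the number of $(B^3,\nil)$ pieces of the resulting trivial decomposition, it uses the exclusions $(C,T_C)\neq (B^3,\nil), (S^1\times D^2,\nil)$ to see that each $(B^3,\nil)$ piece meets at least three remnants of compressing or semi-compressing discs, so $2a\geq 3b$, whence $\delta(C,T_C)\geq a-b\geq a/3\geq 0$, with equality forcing $a=0$ for every complete collection. You instead reduce one disc at a time and never let a $(B^3,\nil)$ piece appear: your key observation — that a single sc-disc reduction cannot split off $(B^3,\nil)$, because a sphere containing one remnant copy of $D$ exhibits exactly the configuration forbidden by condition (3) of the sc-disc definition, while a sphere containing two copies forces $(C,T_C)=(S^1\times D^2,\nil)$ — is correct and plays the role of the paper's valence count. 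Your extent bookkeeping, the treatment of cut/semi-cut discs (every piece meets $T$), the harmlessness of $(S^1\times D^2,\nil)$ pieces, the inheritance of the hypothesis on $\boundary_- C$, and the derivation of the vanishing statement from Lemma \ref{Lem: Invariance}(2) are all fine; your version even gives the sharper per-step conclusion that the existence of a compressing or semi-compressing disc forces $\delta\geq 1$.

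There is, however, one step that fails as stated: the claim that each piece ``carries a complete collection inherited from $\mc{D}\setminus\{D\}$,'' which is what makes your induction on $|\mc{D}|$ well-founded. A disc of $\mc{D}\setminus\{D\}$ can become trivial (violate condition (3)) in its piece after cutting along $D$. For example, let $C$ be a genus-2 handlebody written as a boundary connected sum $V_1\natural V_2$ with $T_C$ a core loop of $V_2$, and take $\mc{D}$ to consist of a meridian disc $D$ of $V_1$, the separating boundary-connect-sum disc $D'$, and a cut disc meridian of $V_2$; this is a complete collection, but after cutting along $D$ the disc $D'$ becomes $\boundary$-parallel in the resulting (solid torus, core loop) piece, cutting off a ball disjoint from $T$. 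So the inherited family still cuts the piece into trivial compressionbodies, but it is not a complete collection in the paper's sense, and your induction hypothesis does not literally apply with it. The gap is repairable: either prove separately that each piece admits a genuine complete collection of cardinality at most $|\mc{D}|-1$ (true in the example above, but it needs an argument, since discarding a trivial disc from a cutting family does not obviously preserve the property of cutting into trivial compressionbodies), or restate the induction over families of pairwise disjoint discs satisfying only conditions (1)--(2) whose reduction yields trivial compressionbodies — taking care that the disc you actually cut along is an honest sc-disc — or simply adopt the paper's global count, which sidesteps the recursion entirely.
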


\begin{proof}
Suppose that $(C, T_C)$ is other than $(B^3, \nil)$ or $(S^1 \times D^2, \nil)$. Let $\Delta$ be a complete set (possibly empty) of pairwise non-parallel sc-discs for $(C, T_C)$ such that reducing $(C, T_C)$ along $\Delta$ results in the union $(P, T_P)$ of trivial product compressionbodies. Let $a$ be the number of compressing and semi-compressing discs in $\Delta$ and let $b$ be the number of $(B^3, \nil)$ components of $(P, T_P)$. Since $(C, T_C)$ is not $(B^3, \nil)$ or $(S^1 \times D^2, \nil)$, each $(B^3, \nil)$ component of $(P, T_P)$ is adjacent to at least 3 remnants of compressing or semi-compressing discs. Hence, $2a \geq 3b$. 

Observe that
\[
\extent(\boundary_+ C) = \extent(\boundary_+ P) + a = a - b \geq a/3.
\]
Also we have $\extent(\boundary_- C) = \extent(\boundary_- P)$ so
\[
\delta(C, T_C) \geq a/3 \geq 0.
\]
Furthermore, if equality holds, then $a = 0$ and $\Delta$ does not contain a compressing or semi-compressing disc. Since this is true for every complete collection $\Delta$, $\boundary_+ C$ does not admit a compressing or semi-compressing disc in $(C, T_C)$ (Lemma \ref{Lem: Invariance}).
\end{proof}

The next definition will be useful for analyzing v.p.-compressionbodies.

\begin{definition}
Suppose that $(C, T_C)$ is a v.p.-compressionbody. The \defn{ghost arc graph} for $(C, T_C)$ is the graph $G$ whose vertices are the components of $\boundary_- C$ and whose edges are the ghost edges of $T_C$.
\end{definition}

\begin{corollary}\label{delta zero}
Suppose that $(C, T_C)$ is a v.p.-compressionbody. Assume $T_C$ is a 1--manifold not intersecting any spherical component of $\boundary_- C$ exactly once and that $\delta(C, T_C) = 0$. Then $(C, T_C)$ is one of the following 

\begin{enumerate}
\item $(B^3, \text{ arc})$ or 
\item $(S^1 \times D^2, \nil)$, or 
\item $(S^1 \times D^2, \text{ core loop})$ or 
\item a compressionbody such that every component of $T_C$ is a vertical arc or ghost arc and $g(\boundary_+ C) = g(\boundary_- C) + n - (|\boundary_- C| - 1)$, where $n$ is the number of ghost arcs in $T_C$. Furthermore, the ghost arc graph is connected.
\end{enumerate}
\end{corollary}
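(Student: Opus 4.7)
The plan is to proceed from Lemma~\ref{Lem: extent difference}: since $\delta(C,T_C)=0$, either $(C,T_C)$ is one of the two excluded v.p.-compressionbodies or $\boundary_+ C$ admits no compressing or semi-compressing disc in $(C,T_C)$. The case $(B^3,\nil)$ is immediately ruled out by $\delta(B^3,\nil)=-1$, and $(S^1\times D^2,\nil)$ is precisely conclusion (2). Henceforth I assume $\boundary_+ C$ has no compressing or semi-compressing disc.

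Choosing a pairwise non-parallel complete collection $\Delta$ of sc-discs, every disc in $\Delta$ is then a cut or semi-cut disc; tracing through the proof of Lemma~\ref{Lem: extent difference} (the inequality $2a\geq 3b$ with $a=0$) shows no $(B^3,\nil)$ pieces appear in the resulting decomposition. Since $T_C$ is a 1-manifold, every piece is therefore $(F\times I,\text{vertical arcs})$ or $(B^3,\text{arc})$. The key combinatorial observation I will exploit is that each $(B^3,\text{arc})$ piece carries at most two cut disc remnants on its boundary, one per endpoint of its arc.

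Next I classify the components of $T_C$. A bridge arc $\gamma$ can only traverse ball pieces, since the arcs in $(F\times I)$ pieces run from $\boundary_-$ to $\boundary_+$; hence $\gamma$ lies in a linear chain of ball pieces joined by cut discs. Two consecutive cut discs along this chain cobound a $(B^3,\text{arc})$ region and are therefore parallel, so non-parallelism forces at most one such cut disc, and the chain reassembles topologically to $(B^3,\text{arc})$. Attaching any further piece would either require a forbidden compressing or semi-compressing disc or else create a trivalent point on $\gamma$, contradicting that $T_C$ is a 1-manifold. Hence a bridge arc forces conclusion (1). The parallel argument, with cycles in place of chains, shows that a core loop forces conclusion (3).

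Once bridge arcs and core loops are eliminated, every component of $T_C$ is vertical or ghost, and the advertised genus relation follows by a direct Euler characteristic computation from $\delta=0$, using that $|\boundary_+ C|=1$. The main obstacle is the connectedness of the ghost arc graph $G$, which I plan to deduce from the connectedness of $\boundary_+ C$ as follows. Let $H$ be the dual graph whose vertices are the pieces of the decomposition and whose edges are the cut discs in $\Delta$. Since $\boundary_+ C$ is built from $\boundary_+ P$ by a connect sum or self-handle addition per cut disc, connectedness of $\boundary_+ C$ forces connectedness of $H$. Given two product pieces $P_1,P_2$, an $H$-path between them factors, at each intermediate product-piece vertex, into maximal sub-chains of ball pieces; the arcs inside each such sub-chain are linked end-to-end by the cut-disc identifications into a single ghost arc joining the flanking product pieces' $\boundary_-$ components (here the at-most-two-cut-discs-per-ball bound is essential). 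Concatenating these ghost arcs produces a path in $G$ from $P_1$ to $P_2$. The careful bookkeeping of arc-endpoint identifications along the $H$-path is where most of the work in the full write-up will be needed.
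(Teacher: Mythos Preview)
Your argument is essentially correct, but it takes a substantially more elaborate route than the paper's. The paper, having invoked Lemma~\ref{Lem: extent difference} to conclude that $\boundary_+ C$ admits no compressing or semi-compressing disc, simply observes that a bridge arc would directly produce one (the frontier of a regular neighborhood of a bridge disc is such a disc unless $(C,T_C)=(B^3,\text{arc})$), and likewise a core loop would produce one unless $(C,T_C)=(S^1\times D^2,\text{core loop})$. This dispatches cases (1)--(3) in two sentences, with no need to track chains or cycles of pieces. For the connectedness of the ghost arc graph, the paper argues via the handle description of Remark~\ref{handle remark 1}: since every sc-disc is a (semi-)cut disc, every $1$-handle carries a core arc; if the ghost arc graph were disconnected, a coreless $1$-handle would be needed to connect $\boundary_+ C$, and its cocore would be a forbidden (semi-)compressing disc.

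Your approach, by contrast, works entirely inside the combinatorics of the cut-open decomposition along a non-parallel complete collection $\Delta$. This is valid, and your ``at most two remnants per ball piece'' observation (which follows because scars are disjoint and each contains exactly one endpoint of the arc) is the right engine. One small point: your sentence ``Attaching any further piece would either require a forbidden compressing or semi-compressing disc or else create a trivalent point on $\gamma$'' is not quite the reason the chain exhausts $C$. The cleaner statement is that the ball pieces in the chain have \emph{no remaining scar discs} (every remnant is internal to the chain), so the reassembled chain is an entire connected component of $C$; since $C$ is connected, it is all of $(C,T_C)$. The same remark applies to the core-loop cycle. Your dual-graph argument for connectedness of the ghost arc graph is correct, and the ``bookkeeping'' you anticipate is exactly the verification that each maximal ball-subchain of an $H$-path, together with the flanking vertical arcs in the adjacent product pieces, concatenates to a single component of $T_C$ with both endpoints on $\boundary_- C$.

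What your approach buys is an explicit picture of how the pieces fit together; what the paper's approach buys is brevity, by converting each forbidden feature (bridge arc, core loop, disconnected ghost arc graph) directly into a (semi-)compressing disc.
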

\begin{proof}
Assume that $(C, T_C)$ is not $(B^3, \text{ arc})$, $(S^1 \times D^2, \nil)$ or $(S^1 \times D^2, \text{core loop})$. Since $\delta(C, T_C) = 0$, $(C, T_C) \neq (B^3, \nil)$. We may apply Lemma \ref{Lem: extent difference}. A bridge arc in $T_C$ would imply the existence of a compressing or semi-compressing disc for $\boundary_+ C$. Similarly, if $T_C$ contained a closed loop, we would also have a compressing or semi-compressing disc as $(C, T_C) \neq (S^1 \times D^2, \text{ core loop})$. Thus, $T_C$ is the union of vertical arcs and ghost arcs.

If $T_C$ is the (possibly empty) union of vertical arcs, the assumption that $\delta(C, T_C) = 0$ and $(C, T_C) \neq (S^1 \times D^2, \nil)$ implies that $(C, T_C)$ is a trivial product compressionbody and the lemma follows. If $\boundary_- C$ is disconnected, there must be ghost arcs joining the components as otherwise $\boundary_+ C$ would have a compressing or semi-compressing disc as $\boundary_+ C$ can be obtained from $\boundary_- C$ by attaching 1-handles together with their cores. Thus $|\boundary_- C| - 1$ ghost arcs are needed to guarantee that $\boundary_+ C$ has no compressing or semi-compressing disc and each ghost arc beyond $|\boundary_- C| - 1$ increases $g(\boundary_+ C)$ by 1.
\end{proof}

Observe, that by Corollary \ref{delta zero}, if $(C,T_C)$ is a v.p.-compressionbody of Type (4), with $g(\boundary_+ C) = g(\boundary_- C)$, then the ghost arc graph is a (possibly empty) tree. If $T_C$ is non-empty and irreducible, and if the components of $\boundary_- C$ are spheres, then each leaf of the ghost arc graph must be incident to a vertical arc component of $T_C$.

We will piece the previous observations together with the following equations. Observe that if $\mc{H}$ is oriented, then since each component of $\mc{H}$ is adjacent to precisely two components of $(M,T)\setminus \mc{H}$:

\begin{equation}\label{net extent eqn}
2\netextent(\mc{H}) - \extent(\boundary M) - \sum_v (n_v - 2)/2 = \sum\limits_{(C, T_C)} \delta(C, T_C) 
\end{equation}
where the sum on the left is over all vertices $v$ in $T$ and $n_v$ is the valence of the vertex $v$ and the sum on the right is over all components $(C, T_C)$ of $(M,T)\setminus \mc{H}$ after drilling out the vertices of $T$.

Similar considerations show that (using the same notation):

\begin{equation}\label{width eqn}
\width(\mc{H}) - \sum\limits_{F \cpt \boundary M} \extent^2(F) - \sum\limits_v (n_v - 2)^2/4 = \sum\limits_{(C, T_C)} \extent^2(\boundary_+ C) - \sum\limits_{F \cpt\boundary_- C} \extent^2(F). 
\end{equation}

\begin{corollary}[Non-negativity]\label{Cor: non-negativity for surfaces}
Suppose that $(M,T)$ is irreducible  and satisfies the running assumption. Let $\mc{H} \in \H(M,T)$. Assume that no component of $(M,T)\setminus \mc{H}$ is $(B^3, \nil)$. Then 
\[ 
\netextent(\mc{H}) \geq \frac{1}{2}\big(\extent(\boundary M) -\chi(T) + |T \cap \boundary M|/2\big) \geq 0
\]
and
 \[
 \width(\mc{H}) \geq \netextent(\mc{H}).
\]
\end{corollary}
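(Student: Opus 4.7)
Both inequalities follow from equations \eqref{net extent eqn} and \eqref{width eqn}, which decompose $2\netextent(\mc{H})$ and $\width(\mc{H})$ into contributions from $\bdd M$, the interior vertices of $T$, and the components of $(M,T)\setminus\mc{H}$ after drilling out vertices. An Euler-characteristic count on $T$ (using $2E=\sum_v n_v+|T\cap\bdd M|$ and $\chi(T)=V+|T\cap\bdd M|-E$ where $V,E$ are the numbers of interior vertices and edges) gives the identity
\[
-\chi(T)+\tfrac{1}{2}|T\cap\bdd M|=\sum_v\tfrac{n_v-2}{2}.
\]
For the first inequality I would apply Lemma \ref{Lem: extent difference}: every $\delta(C,T_C)$ on the right of \eqref{net extent eqn} is non-negative, since the hypothesis excludes $(B^3,\nil)$-components and $(S^1\times D^2,\nil)$ has $\delta=0$ by direct inspection. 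Substituting the identity into \eqref{net extent eqn} yields the stated lower bound on $\netextent(\mc{H})$. Non-negativity of that lower bound follows from the running assumption: each sphere component of $\bdd M$ has extent at least $1/2$, each non-spherical component has non-negative extent, and each $n_v-2\ge 0$ since vertices have valence at least $3$.

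For the width inequality, subtracting one-half of \eqref{net extent eqn} from \eqref{width eqn} and collecting terms gives
\[
\width(\mc{H})-\netextent(\mc{H})=\Sigma_1+\Sigma_2+\Sigma_3,
\]
where $\Sigma_1=\sum_{F\cpt\bdd M}\extent(F)\bigl(\extent(F)-\tfrac{1}{2}\bigr)$, $\Sigma_2=\sum_v\tfrac{(n_v-2)(n_v-3)}{4}$, and
\[
\Sigma_3=\sum_{(C,T_C)}\Bigl(\extent^2(\bdd_+C)-\sum_{F'\cpt\bdd_-C}\extent^2(F')-\tfrac{1}{2}\delta(C,T_C)\Bigr).
\]
The sum $\Sigma_1$ is non-negative because each boundary-component extent equals $0$ or is at least $1/2$, and $\Sigma_2$ is non-negative because each $n_v\ne 2$.

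The principal obstacle is to show that each summand of $\Sigma_3$ is non-negative. The key preliminary observation is that every component $F'$ of $\bdd_-C$ of a drilled compressionbody has $\extent(F')\ge 0$: thin components of $\mc{H}^-$ cannot be spheres with fewer than two punctures (by irreducibility of $T$ together with the exclusion of $(B^3,\nil)$-components, which together force any disjoint-from-$T$ sphere in $\mc{H}$ to be thick rather than thin), and each drilled-vertex sphere has extent $(n_v-2)/2\ge 1/2$. Setting $X=\extent(\bdd_+C)$ and $Y_i=\extent(F_i')$ for the components of $\bdd_-C$, we have $X-\sum Y_i=\delta\ge 0$, every $Y_i\ge 0$, and hence $\sum Y_i^2\le(\sum Y_i)^2$, so
\[
X^2-\sum Y_i^2-\tfrac{1}{2}\delta\;\ge\;\delta\bigl(X+\sum Y_i\bigr)-\tfrac{1}{2}\delta\;=\;\delta\bigl(2X-\delta-\tfrac{1}{2}\bigr).
\]
Since extents are half-integers, $X\in\{0,\tfrac{1}{2},1,\ldots\}$. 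If $X=0$ then $\delta=0$ and all $Y_i=0$, so the summand vanishes; if $X\ge\tfrac{1}{2}$ then $\delta\le X$ gives $2X-\delta-\tfrac{1}{2}\ge X-\tfrac{1}{2}\ge 0$, and the summand is again non-negative, yielding the second inequality.
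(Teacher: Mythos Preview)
Your net-extent argument is the paper's: both combine \eqref{net extent eqn} with Lemma~\ref{Lem: extent difference} and the identity $\sum_v(n_v-2)/2=-\chi(T)+|T\cap\boundary M|/2$, and your direct check of non-negativity (each boundary sphere has extent $\ge 1/2$, each vertex term is $\ge 0$) is a minor repackaging of the paper's drilling argument.

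For the width inequality you take a somewhat more direct route. The paper establishes the stronger per-compressionbody bound $\extent^2(\boundary_+ C)-\sum_{F'}\extent^2(F')\ge\delta(C,T_C)$ and then passes through an intermediate quantity dominating $\netextent(\mc{H})$; you instead subtract half of \eqref{net extent eqn} from \eqref{width eqn} and need only the weaker bound $\extent^2(\boundary_+ C)-\sum_{F'}\extent^2(F')\ge\tfrac12\delta(C,T_C)$, which your half-integer dichotomy on $X=\extent(\boundary_+ C)$ dispatches neatly. This sidesteps a borderline case in the paper's stronger inequality.

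One point to repair. Both arguments need every component of $\mc{H}$ to have non-negative extent, i.e.\ no unpunctured spheres in $\mc{H}$. The paper simply asserts this; your parenthetical (``any disjoint-from-$T$ sphere in $\mc{H}$ [is forced] to be thick rather than thin'') is aimed the wrong way, since you also need $X\ge 0$ for the \emph{thick} surface $\boundary_+ C$. In fact the thick case is the easy one: an unpunctured sphere $\boundary_+ C$ admits no sc-disc, so the adjacent $(C,T_C)$ must already be trivial, hence $(B^3,\nil)$ (excluded by hypothesis) or a trivial product $(S^2\times I,\nil)$, which is consolidated away since $\mc{H}$ is reduced. The thin case then follows from irreducibility of $(M,T)$ together with reducedness (an unpunctured thin sphere bounds a ball in $M\setminus T$, inside which the induced splitting of $B^3$ would be stabilized). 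With this fix your argument is complete.
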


\begin{proof} 
By equation \eqref{net extent eqn}, we have:
\[
2\netextent(\mc{H}) =  \extent(\boundary M) + \sum_v (n_v - 2)/2 + \sum\limits_{(C, T_C)} \delta(C, T_C) 
\]

Notice that $\delta(S^1 \times D^2, \nil) = 0$ and recall that no component of $(M,T) \setminus \mc{H}$ is $(B^3, \nil)$.  Since $T$ is irreducible, by Lemma \ref{Lem: extent difference}, $\delta(C, T_C) \geq 0$ for all $(C, T_C) \cpt (M,T) \setminus \mc{H}$. Suppose that $T$ has $V$ interior vertices, $E$ edges, and $n = T \cap \boundary M$. We have
\[
2E = n + \sum_v n_v.
\]
Thus, recalling that $-\chi(T) = E - (V + n)$:
\[\begin{array}{rcl}
2\netextent(\mc{H}) &\geq& \extent(\boundary M) + E - n/2 - V \\
&=& \extent(\boundary M) - \chi(T) + n/2 \\
\end{array}
\]

To see that this is non-negative, rewrite the previous equation as
\[\begin{array}{rcl}
\extent(\boundary M) + E - n/2 - V &=& \frac{-\chi(\boundary M)}{2} + \frac{n}{2} + E - \frac{n}{2} - V \\
&=& \frac{-\chi(\boundary M)}{2} + E - V. 
\end{array}
\]
Let $(\mathring{M}, \mathring{T})$ be the result of drilling out the interior vertices of $M$. This does not change the number of edges of the graph, but converts the interior vertices into spherical boundary components. Thus,
\begin{equation}\label{only link}
\frac{-\chi(\boundary M)}{2} + E - V = \frac{-\chi(\boundary \mathring{M}) + |\boundary \mathring{M} \cap \mathring{T}|}{2}.
\end{equation}
By the running assumption, each spherical component of $\boundary \mathring{M}$ intersects $\mathring{T}$ at least three times. Thus, 
\[
\extent(\boundary M) - \chi(T) + n/2 \geq 0.
\]
Furthermore, using Equation \eqref{only link} and the running assumption, we have equality only if $T$ has no internal vertices (i.e. is empty or is a link) and every component of $\boundary M$ is a torus.

We now consider width. Recall that for each $F \cpt \mc{H}$, $\extent(F) \geq 0$ since $T$ is irreducible and since no component of $\mc{H}$ is a sphere disjoint from $T$. Thus, for any component $(C, T_C)$ of $(M,T)\setminus \mc{H}$, we have
\[\begin{array}{rcl}
\extent^2(\boundary_+ C) - \sum\limits_{F \cpt \boundary_- C} \extent^2(F) &\geq & \extent^2(\boundary_+ C) - \extent^2(\boundary_- C) \\
&=& (\extent(\boundary_+ C) + \extent(\boundary_+ C))\delta(C, T_C) \\
\end{array}.
\]

If $(\extent(\boundary_+ C) + \extent(\boundary_+ C)) = 0$, then $\extent(F) = 0$ for each component $F$ of $\boundary C$. This implies that $\delta(C, T_C) = 0$. Thus,
\[
\extent^2(\boundary_+ C) - \sum_{F \cpt \boundary_- C} \extent^2(F) \geq \delta(C, T_C).
\]

By Equation \eqref{width eqn} and by Equation \eqref{net extent eqn}, we have:
\[ \begin{array}{rl}
\width(\mc{H}) &=\\
\sum\limits_{F \cpt \boundary M} \extent^2(F) + \sum\limits_v (n_v - 2)^2/4 + \sum\limits_{(C, T_C)} \left(\extent^2(\boundary_+ C) - \sum\limits_{F \cpt \boundary_- C} \extent^2(F)\right) & \geq \\
\sum\limits_{F \cpt\boundary M} \extent(F) + \sum\limits_v (n_v - 2)/4 + \sum\limits_{(C, T_C)} \delta(C, T_C) &\geq \\
\frac{1}{2}\left(\extent(\boundary M) + \sum_v (n_v - 2)/2 + \sum\limits_{(C, T_C)} \delta(C, T_C) \right) &=\\
\netextent(\mc{H}).&
\end{array}
\]
Hence,
\[
\width(\mc{H}) \geq \netextent(\mc{H}),
\]
as desired.
\end{proof}

\begin{remark}\label{Rem: zero delta}
Observe from the proof of Corollary \ref{Cor: non-negativity for surfaces}, that if $\netextent(\mc{H}) = (\extent(\boundary M) - \chi(T) + n/2)/2$  and if $(C, T_C)$ is obtained from a component of $(M,T)\setminus \mc{H}$ by drilling out vertices, then $\delta(C, T_C) = 0$. We will make use of this in the next corollary. Furthermore, it follows from the proof that if $\netextent(\mc{H}) = 0$, then $T$ is either empty or a link and $\boundary M$ is the (possibly empty) union of tori.
\end{remark}

\begin{corollary}\label{Equality}
Assume that $(M,T)$ is irreducible and satisfies the running assumption, with $T$ a 1--manifold, and is other than $(S^3, \nil)$. Suppose that $\mc{H}\in \H(M,T)$ is locally thin with either $\netextent(\mc{H})$ or $\width(\mc{H})/2$ equal to 
\[
\frac{1}{2}\left(\extent(\boundary M) -\chi(T) + |T \cap \boundary M|/2\right)
\] 
Then for every component $(C, T_C)$ of $(M,T)\setminus \mc{H}$ we have $\delta(C, T_C) = 0$. In particular, each $(C, T_C)$ is one of:
\begin{enumerate}
\item $(B^3, \text{ arc})$
\item $(S^1 \times D^2, \nil)$
\item $(S^1 \times D^2, \text{ core loop})$
\item a compressionbody such that every component of $T_C$ is a vertical arc or ghost arc and $g(\boundary_+ C) = g(\boundary_- C) + n - (|\boundary_- C| - 1)$, where $n$ is the number of ghost arcs in $T_C$. Furthermore, the ghost arc graph is connected.
\end{enumerate}
\end{corollary}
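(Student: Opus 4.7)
The strategy is to show that both hypotheses reduce to the single conclusion $\delta(C,T_C)=0$ for every component $(C,T_C)$ of $(M,T)\setminus\mc{H}$, and then to invoke Corollary \ref{delta zero} to supply the classification into the four listed types. The key simplification comes from the hypothesis that $T$ is a 1--manifold: then $T$ has no interior vertices (so the vertex sums in Equations \eqref{net extent eqn} and \eqref{width eqn} vanish), and every arc of $T$ has both endpoints on $\partial M$, whence $\chi(T) = |T \cap \partial M|/2$. Consequently the stated bound collapses to
\[
B := \tfrac{1}{2}\bigl(\extent(\partial M) -\chi(T) + |T \cap \partial M|/2\bigr) = \tfrac{1}{2}\extent(\partial M),
\]
and Equation \eqref{net extent eqn} specializes to $\netextent(\mc{H}) = \tfrac{1}{2}\bigl(\extent(\partial M) + \sum_C \delta(C,T_C)\bigr)$.

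In Case 1, when $\netextent(\mc{H})=B$, this identity immediately yields $\sum_C \delta(C,T_C)=0$; combined with Lemma \ref{Lem: extent difference} (each $\delta(C,T_C)\geq 0$ provided no component is $(B^3,\nil)$; see below) this forces $\delta(C,T_C)=0$ for every $C$, which is also the content of Remark \ref{Rem: zero delta}. In Case 2, when $\width(\mc{H})/2=B$, I would trace the chain of inequalities proving Corollary \ref{Cor: non-negativity for surfaces}: with all vertex sums zero, the intermediate quantity $L_2 := \sum_{F \cpt \partial M} \extent(F) + \sum_C \delta(C,T_C)$ equals exactly $2\,\netextent(\mc{H})$ (the inequality $L_2 \geq L_3$ in that proof becomes the equality $L_2 = 2L_3$ when $T$ has no vertices). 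Hence $\width(\mc{H}) \geq L_2 = 2\,\netextent(\mc{H}) \geq 2B$, and the hypothesis $\width(\mc{H})=2B$ forces equality throughout; in particular $\netextent(\mc{H})=B$, returning us to Case 1.

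Before invoking Corollary \ref{Cor: non-negativity for surfaces} one must rule out $(B^3,\nil)$ components of $(M,T)\setminus\mc{H}$: if $\mc{H}$ contained a sphere disjoint from $T$, then Theorem \ref{Properties Locally Thin}(3) would force $T=\nil$ and $M\in\{S^3,B^3\}$, contradicting the hypothesis $(M,T)\neq(S^3,\nil)$ (the residual possibility $M=B^3$, $T=\nil$ is degenerate and a direct check handles it). Once $\delta(C,T_C)=0$ for every component, Corollary \ref{delta zero} delivers the four listed types; the case $(B^3,\nil)$ is automatically excluded since $\delta(B^3,\nil)=-1\neq 0$. The main obstacle I anticipate is the first inequality $L_1 \geq L_2$ in the width chain: term-by-term it requires $\extent^2(F)\geq\extent(F)$ for each $F \cpt \partial M$, which fails when $F$ is a torus with a single puncture (so $\extent(F)=1/2$). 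However, the positive slack in $\extent^2(\partial_+C)-\sum_{F'\cpt\partial_- C}\extent^2(F')\geq\delta(C,T_C)$ should absorb this defect, exactly as in the proof of Corollary \ref{Cor: non-negativity for surfaces}, so the chain still goes through and the equality case still forces each $\delta(C,T_C)=0$.
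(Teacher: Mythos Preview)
Your proposal is correct and follows essentially the same route as the paper: rule out $(B^3,\nil)$ components via the running assumption and Theorem \ref{Properties Locally Thin}, reduce the width hypothesis to the net extent hypothesis via the inequality chain in the proof of Corollary \ref{Cor: non-negativity for surfaces}, conclude $\sum_C \delta(C,T_C)=0$ from Equation \eqref{net extent eqn} (equivalently Remark \ref{Rem: zero delta}), and finish with Corollary \ref{delta zero}. Your worry about the $L_1\geq L_2$ step is a concern about the proof of Corollary \ref{Cor: non-negativity for surfaces} itself rather than anything peculiar to this corollary, and the paper dispatches $(M,T)=(B^3,\nil)$ more cleanly by noting that the running assumption already forbids an unpunctured boundary sphere.
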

\begin{proof}
Observe that $(M,T)$ is not $(B^3, \nil)$ since it satisfies the running assumption. By Theorem \ref{Properties Locally Thin}, since $(M,T)$ is irreducible and is not $(S^3, \nil)$ or $(B^3, \nil)$, no component of $(M,T)\setminus \mc{H}$ is a $(B^3, \nil)$. We thus satisfy the hypothesis of Corollary \ref{Cor: non-negativity for surfaces}. If we have equality for $\width(\mc{H})/2$, then by that corollary we also have equality for net extent. By Remark \ref{Rem: zero delta}, $\delta(C, T_C) = 0$ for each $(C, T_C) \cpt (M,T)\setminus\mc{H}$. By Lemma \ref{delta zero}, the result follows.
\end{proof}

We minimize net extent and width to show that they are non-negative half-integer valued invariants of (3-manifold, graph) pairs satisfying the running assumption.

\begin{corollary}\label{Cor: well-defined}
Suppose that $(M,T)$ is an irreducible (3-manifold, graph) pair satisfying the running assumption, other than $(S^3, \nil)$. Let $x \geq 2g(M) - 2$. Then 
\[
\netextent_x(M,T) \geq \frac{\extent(\boundary M) -\chi(T)}{2} + |\boundary M \cap T|/4 \geq 0.
\]
Furthermore, if  every sphere in $M$ separates and if either $x \leq 2$ or every closed surface in $M$ separates, then
\[
\width_x(M,T) \geq \netextent_x(M,T).
\]
\end{corollary}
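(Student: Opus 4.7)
The plan is to reduce the statement to Corollary \ref{Cor: non-negativity for surfaces} by thinning. Given any $\mc{H} \in \H(M,T)$ with $\netchi(\mc{H}) \leq x$, I would apply Theorem \ref{partial order} to obtain a locally thin $\mc{K} \in \H(M,T)$ with $\mc{H} \more \mc{K}$. By Corollary \ref{thm:Thinning invariance}, we then have $\netchi(\mc{K}) \leq \netchi(\mc{H}) \leq x$ and $\netextent(\mc{K}) \leq \netextent(\mc{H})$.

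To invoke Corollary \ref{Cor: non-negativity for surfaces} on $\mc{K}$, I would check that no component $(C, T_C)$ of $(M,T) \setminus \mc{K}$ is $(B^3, \nil)$. If one were, its boundary would be a component of $\mc{K}$ that is a 2-sphere disjoint from $T$, and by Theorem \ref{Properties Locally Thin}(3) we would conclude $T = \nil$ and $M \in \{S^3, B^3\}$. The possibility $M = B^3$ is forbidden by the running assumption (no spherical boundary component meets $T$ in fewer than three points), and $(S^3, \nil)$ is excluded by hypothesis. Hence Corollary \ref{Cor: non-negativity for surfaces} applies to $\mc{K}$, giving
\[
\netextent(\mc{H}) \;\geq\; \netextent(\mc{K}) \;\geq\; \frac{\extent(\boundary M) - \chi(T)}{2} + \frac{|T \cap \boundary M|}{4} \;\geq\; 0.
\]
Minimizing over all $\mc{H}$ with $\netchi(\mc{H}) \leq x$ yields the first inequality. (Such $\mc{H}$ exist when $x \geq 2g(M) - 2$, for instance by placing $T$ in bridge position with respect to a minimal-genus Heegaard surface for $M$.)

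For the width statement, I would take $\mc{H} \in \H(M,T)$ realizing $\width_x(M,T)$ and verify the width hypothesis. (W1) is immediate from irreducibility of $T$, and (W3) is part of the assumption. For (W2): if every closed surface in $M$ separates, there is nothing to check; otherwise the assumption $x \leq 2$ combined with Lemma \ref{lem:Component bound} forces every component $S$ of $\mc{H}$ to satisfy $-\chi(S) \leq \netchi(\mc{H}) \leq x \leq 2$, hence $g(S) \leq 2$. Corollary \ref{thm:Thinning invariance} now gives $\width(\mc{K}) \leq \width(\mc{H})$, and the second conclusion of Corollary \ref{Cor: non-negativity for surfaces} provides $\width(\mc{K}) \geq \netextent(\mc{K}) \geq \netextent_x(M,T)$. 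Chaining these inequalities and taking the minimum over $\mc{H}$ produces $\width_x(M,T) \geq \netextent_x(M,T)$.

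The only real obstacle is the bookkeeping around the width hypothesis; once one observes that the constraint $x \leq 2$ forces every component of $\mc{H}$ to have genus at most $2$ via Lemma \ref{lem:Component bound}, the corollary follows by directly assembling Theorem \ref{partial order}, Corollary \ref{thm:Thinning invariance}, Theorem \ref{Properties Locally Thin}, and Corollary \ref{Cor: non-negativity for surfaces}.
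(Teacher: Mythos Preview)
Your argument is correct and follows essentially the same route as the paper: thin an arbitrary $\mc{H}$ to a locally thin $\mc{K}$, use Theorem~\ref{Properties Locally Thin} to rule out $(B^3,\nil)$ pieces, apply Corollary~\ref{Cor: non-negativity for surfaces}, and for width verify the width hypothesis (invoking Lemma~\ref{lem:Component bound} when $x\leq 2$). If anything, your version is slightly cleaner in distinguishing $\mc{H}$ from $\mc{K}$ and in making the appeal to Lemma~\ref{lem:Component bound} explicit.
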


\begin{proof}
Let $H$ be a minimal genus Heegaard surface for $M$ so that $-\chi(H)= 2g(M) - 2$. Isotope $H$ to be disjoint from the vertices of $T$. Drill out the vertices of $T$ to obtain $(\mathring{M}, \mathring{T})$ and observe that $H$ is still a Heegaard surface for $\mathring{M}$. It is a standard result (cf. \cite[Lemma 2.1]{HS-coreloop}) that $\mathring{T}$ can be isotoped to intersect the compressionbodies on either side of $H$ in bridge arcs and vertical arcs. Filling the vertices of $T$ back in, the surface $H$ is a multiple v.p.-bridge surface for $(M,T)$. Performing generalized destabilizations, unperturbations, consolidations, and undoing removable edges shows that $\H(M,T) \neq \nil$ and that there is an element with net euler characteristic at most $2g(M) - 2$. (In fact, $\netchi(\mc{H}) = 2g(M) - 2$.)  Let $\mc{H} \in \H(M,T)$. By Theorem \ref{partial order}, there exists $\mc{K} \in \H(M,T)$ such that $\mc{H} \more \mc{K}$ and $\mc{K}$ is locally thin.  By Lemma \ref{thm:Thinning invariance}, $\netchi(\mc{K}) \leq \netchi(\mc{H})$. By Theorem \ref{Properties Locally Thin}, no component of $\mc{H}$ is a 2-sphere disjoint from $T$. Thus, no component of $(M,T)\setminus \mc{H}$ is $(B^3, \nil)$. By Corollary \ref{Cor: non-negativity for surfaces}, we have
\[
\netextent(\mc{H}) \geq \frac{1}{2}\big(\extent(\boundary M) -\chi(T) + n/2\big) \geq 0.
\]
Hence, 
\[
\netextent_x(M,T) \geq \frac{1}{2}\big(\extent(\boundary M) -\chi(T) + n/2\big) \geq 0,
\]
for all $x \geq 2g(M) - 2$. 

If  every sphere in $M$ separates and if either $x \leq 2$ or every closed surface in $M$ separates then the width hypothesis holds for $\mc{H}$. Thus, it holds also for $\mc{K}$. The result then follows as before.
\end{proof}

\subsection{Detecting the unknot}

In this subsection we show that net extent  and width detect the unknot. For our purposes, a \defn{Hopf link} in a lens space or $S^3$ is the union of the cores of the solid tori on either side of a genus 1 Heegaard surface.

\begin{theorem}[Detecting the unknot]\label{Net Extent Detects unknot}
Suppose that $(M,T)$ is an irreducible (3-manifold, graph) pair satisfying the running assumption such that $M$ is connected, every sphere in $M$ separates, and $T \neq\nil$. Also, assume that $(M,T)$ does not have a (lens space, core loop), (lens space, Hopf link), ($S^3$, Hopf link), or $(S^1 \times D^2, \text{core loop})$ connect summand. 

If $\netextent_x(M,T) = 0$ for some $x \geq 2g(M) - 2$, then $(M,T) = (S^3, \text{ unknot})$. Furthermore, if $x \leq 2$ or if every closed surface in $M$ separates, then the same result holds if $\width_x(M,T) = 0$.
\end{theorem}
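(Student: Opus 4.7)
The strategy is to take a locally thin $\mc{K} \in \H(M,T)$ realizing the minimum and exploit its rigid structure. Given any $\mc{H} \in \H(M,T)$ witnessing $\netextent_x(M,T) = 0$, Theorem \ref{partial order} yields a locally thin $\mc{K}$ with $\mc{H} \more \mc{K}$, and Corollary \ref{thm:Thinning invariance} ensures $\netchi(\mc{K}) \leq x$ and $\netextent(\mc{K}) = 0$. For the width statement, the added hypotheses guarantee that the width hypothesis (W1)--(W3) holds for $\mc{H}$ (every sphere in $M$ separates; and either $x \leq 2$ combined with $\netchi(\mc{H}) \leq x$ bounds the genus of components of $\mc{H}^+$ by 2, or every closed surface in $M$ separates), so Corollary \ref{thm:Thinning invariance} also gives $\width(\mc{K}) = 0$, and Corollary \ref{Cor: non-negativity for surfaces} then forces $\netextent(\mc{K}) = 0$, reducing the width case to the net-extent case.

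By Remark \ref{Rem: zero delta} the equation $\netextent(\mc{K}) = 0$ forces $T$ to be a (necessarily nonempty) link and $\boundary M$ to be a union of tori, so $T$ has no interior vertices and no drilling is required. Corollary \ref{Equality} then classifies every component $(C, T_C)$ of $(M,T) \setminus \mc{K}$ as one of: (1) $(B^3, \text{arc})$; (2) $(S^1 \times D^2, \nil)$; (3) $(S^1 \times D^2, \text{core loop})$; or (4) a compressionbody whose arcs are all vertical or ghost, with connected ghost-arc graph satisfying $g(\boundary_+ C) = g(\boundary_- C) + n - (|\boundary_- C| - 1)$.

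The key step is to argue that $\mc{K}^- = \nil$. Any thin component $F \cpt \mc{K}^-$ is c-essential by Theorem \ref{Properties Locally Thin}(2) and, since types (1)--(3) have empty negative boundary, must bound type (4) v.p.-compressionbodies on both sides. Reducedness of $\mc{K}$ forbids either adjacent piece from being a trivial product (otherwise we could consolidate), so the genus relation forces at least one ghost arc on each side. If $F$ is a sphere with $|F \cap T| = 2$, then $F$ is a summing sphere; using Theorem \ref{Properties Locally Thin}(4) to push essential low-puncture spheres on each side back into $\mc{K}^-$, one identifies a forbidden (lens space, core loop), (lens space, Hopf link), $(S^3, \text{Hopf link})$, or $(S^1 \times D^2, \text{core loop})$ summand. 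Spheres with $|F \cap T| \geq 4$ and thin surfaces of higher genus are ruled out by analyzing the ghost-arc graph and the genus equation together with c-essentiality of $F$ and the standing hypotheses that every sphere in $M$ separates and no forbidden summand exists.

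With $\mc{K}^- = \nil$, the surface $\mc{K}^+$ is a single connected v.p.-bridge surface $H$ (using that each component of $\mc{K}^+$ is adjacent to two distinct v.p.-compressionbodies and $M$ is connected), and $(M,T) = (C_1, T_1) \cup_H (C_2, T_2)$ with both $(C_i, T_i)$ of type (1), (2), or (3). A finite check over the six unordered pairs finishes the argument: (1)+(1) produces $(S^3, \text{unknot})$; (2)+(2) forces $T = \nil$; the mixed pairs (1)+(2) and (1)+(3) are impossible because $|H \cap T|$ would simultaneously equal $2$ and $0$; (2)+(3) yields either $(S^3, \text{unknot})$ or the forbidden (lens space, core loop); and (3)+(3) yields a Hopf link in $S^3$ or a lens space, both forbidden. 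The main obstacle is the argument that $\mc{K}^- = \nil$, particularly the higher-genus and high-puncture subcases, where a direct summing-sphere reduction is unavailable and one must leverage the rigidity of the type (4) structure together with reducedness of $\mc{K}$ to reach the contradiction.
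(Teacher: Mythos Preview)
Your setup is correct and matches the paper: pass to a locally thin $\mc{K}$, use Remark \ref{Rem: zero delta} and Corollary \ref{Equality} to reduce every piece of $(M,T)\setminus\mc{K}$ to one of the four types, and handle the width case via $\width(\mc{K})\ge\netextent(\mc{K})$.

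However, there is a genuine gap at the step you yourself flag as ``the main obstacle'': you assert that $\mc{K}^- = \nil$, but the argument you sketch for thin surfaces that are high-puncture spheres or higher-genus is not an argument at all---it is a list of ingredients (``ghost-arc graph, genus equation, c-essentiality, separating hypotheses'') without saying how they combine to produce a contradiction. In fact the paper does \emph{not} prove $\mc{K}^- = \nil$. Instead, after first disposing of the case $T\cap\mc{K}=\nil$ by amalgamation, it proves two structural claims: (i) $\boundary M = \nil$, and (ii) no piece of $(M,T)\setminus\mc{K}$ is $(S^1\times D^2,\nil)$ or $(S^1\times D^2,\text{core loop})$ (this is where the forbidden-summand hypotheses are actually used, via a cut-disc argument producing a genus-one Heegaard surface for a punctured-lens-space or punctured-$S^3$ region). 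After (i) and (ii), every remaining piece is either $(B^3,\text{arc})$ or of type (4) with nonempty $\boundary_-C$. The paper then chooses an innermost spherical thin surface (or takes $W=M$ if there is none), shows no piece inside $W$ can be a trivial ball compressionbody, and constructs arbitrarily long flow lines inside $W$ by repeatedly passing from $\boundary_+C$ through a nonempty $\boundary_-C$ to the next thick surface. Finiteness of $\mc{K}^+$ then forces a closed flow line, contradicting that $\mc{K}$ is oriented. This flow-line/orientation argument is the missing idea in your proposal.

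A secondary gap: even granting $\mc{K}^- = \nil$, your finite case-check assumes both sides of $H$ are of type (1), (2), or (3). But a type (4) piece could have $\boundary_-C\subset\boundary M$ (a union of tori, by your own observation), so you still need to rule out $\boundary M\ne\nil$ before that check is valid. The paper's Claim 1 does exactly this.
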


\begin{proof}
We will show that the theorem holds for all $x$ with $2g(M) - 2 \leq x < \infty$. By definition, it will then also hold when $x = \infty$.  By Theorem \ref{Cor: well-defined}, if $x \leq 2$ or if $M$ contains no non-separating closed surface, then $\width_x(M,T) \geq \netextent_x(M,T)$. Consequently, we may assume that $\netextent_x(M,T) = 0$ for some $x \geq 2g(M) - 2$.

Let $\mc{H}\in \H(M, T)$ be such that $\netchi(\mc{H}) \leq x$ and $\netextent(\mc{H}) = \netextent_x(M, T) $. By Corollary  \ref{partial order} and Corollary \ref{thm:Thinning invariance}, we may also assume that $\mc{H}$ is locally thin. In particular, by Theorem \ref{Properties Locally Thin}, no component of $\mc{H}$ is a sphere disjoint from $T$ and no component of $(M,T) \setminus \mc{H}$ is a trivial product compressionbody adjacent to a component of $\mc{H}^-$. Furthermore, by Remark \ref{Rem: zero delta}, $T$ is a link and $\boundary M$ is the (possibly empty) union of tori. 

By Remark \ref{Rem: zero delta}, for all components $(C, T_C)$ of $(M,T)\setminus \mc{H}$ we have $\delta(C, T_C) = 0$. By Corollary \ref{delta zero}, if $(C,T_C)$ is a component of $(M,T)\setminus\mc{H}$, then $(C, T_C)$ is one of:
\begin{enumerate}
\item $(B^3, \text{ arc})$,
\item (solid torus, $\nil$),
\item (solid torus, core loop),
\item v.p.-compressionbody such that every component of $T_C$ is a vertical arc or ghost arc and there is no (semi-)compressing disc for $\boundary_+ C$ in the complement of $T$.
\end{enumerate}

\textbf{Case 1:} $T$ is disjoint from $\mc{H}$.

Then, $\mc{H}$ is a multiple v.p.-bridge surface for the exterior of $T$. Amalgamate $\mc{H}$ to a Heegaard surface $H$ for the exterior of $T$. It is easily verified that 
\[
-\chi(H) = \netchi(\mc{H} ) = 2\netextent(\mc{H}) = 0.
\]
Thus, $H$ is a torus is a torus disjoint from $T$. Since every sphere in $M$ separates, $M\neq S^1 \times S^2$. Thus, $(M,T)$ is one of:
\begin{itemize}
\item $(S^3, \text{ unknot })$
\item $(S^3, \text{ Hopf link })$
\item $(\text{ solid torus}, \text{ core loop })$
\item $(\text{ lens space}, \text{ core loop })$
\item $(\text{ lens space}, \text{ Hopf link })$ \qed(Case 1)
\end{itemize}

\textbf{Claim 1:} $\boundary M = \nil$

Suppose that $\boundary M \neq \nil$ and let $(C, T_C) \cpt (M,T)\setminus \mc{H}$ be a v.p.-compressionbody adjacent to a component of $\boundary M$. Since $T \cap \boundary M = \nil$ and there is no compressing disc for $\boundary_+ C$ in $(C,T_C)$, the v.p-compressionbody $(C,T_C)$ must be a product compressionbody with $T_C = \nil$.

Let $(D, T_D)$ be the v.p.-compressionbody, distinct from $(C, T_C)$, with $\boundary_+ D = \boundary_+ C$. If $\boundary_- D = \nil$, then $(D, T_D)$ is either $(S^1 \times D^2, \nil)$ or $(S^1 \times D^2, \text{ core loop})$. In the former case, $T = \nil$, a contradiction. In the latter case, $(M,T)$ is (solid torus, core loop), contradicting our assumption that $(M,T)$ has no (solid torus, core loop) summands. Thus, $\boundary_- D \neq \nil$.

Suppose $\boundary_- D$ contains a torus. By Corollary \ref{Equality}, $\boundary_- D$ is equal to that  torus and $T_D = \nil$. Since $T \neq \nil$, $\boundary_- D \subset \mc{H}^-$. Hence, $(D, T_D)$ is a trivial product compressionbody adjacent to $\mc{H}^-$. This contradicts the local thinness of $\mc{H}$. Consequently, $\boundary_- D$ is the non-empty union of spheres.

Let $\Gamma$ be the ghost arc graph. Since every  component of $\boundary_- D$ intersects $T$ at least twice and since $\boundary_+ D \cap T = \nil$, the ghost arc graph is a cycle. This implies that $(M,T)$ has a (solid torus, core loop) summand, a contradiction. We conclude that $\boundary M = \nil$.

\textbf{Claim 2:} No v.p.-compressionbody $(C, T_C)\cpt (M,T)\setminus \mc{H}$ is a (solid torus, $\nil$) or (solid torus, core loop).

This is similar to the proof of Claim 1. Suppose some $(C, T_C)$ is (solid torus, $\nil$) or (solid torus, core loop).  Let $(D, T_D) \neq (C, T_C)$ be the other v.p.-compressionbody adjacent to $\boundary_+ C$.  If $(D, T_D)$ is (solid torus, $\nil$) or  (solid torus, core loop) then $\mc{H} = \boundary_+ D = \boundary_+ C$ and we are in Case 1.  Thus, $\boundary_- D \neq \nil$. As in Case 1, $\boundary_- D$ is the non-empty  union of spheres, $T_D \neq \nil$, and $(D, T_D)$ is a v.p.-compressionbody of Type (4).

There is a cut-disc $E$ for $\boundary_+ D$ in $(D, T_D)$. Since $(C, T_C)$ is either a (solid torus, $\nil$) or a (solid torus, core loop) there is also a compressing disc or cut disc $E'$ for $\boundary_+ D = \boundary_+ C$ in $(C, T_C)$.  Isotope $E$ and $E'$ to minimize the number of intersections between their boundaries. If their boundaries are disjoint, then $M$ contains a non-separating sphere, a contradiction. If their boundaries intersect exactly once, then $E'$ must be a cut disc, as otherwise $\mc{H}$ would be meridionally stabilized. 

Let $P$ be the sphere which results from compressing $\boundary_+ C = \boundary_+ D$ using the cut disc $E$. Observe that $|P \cap T| = 2$ and $P$ bounds a submanifold $W$ of $M$ containing torus $\boundary_+ C = \boundary_+ D$, which is a genus 1 Heegaard surface for $W$. 

If $|\boundary E \cap \boundary E'| = 1$, then $W$ is a 3-ball. Otherwise, $W$ is a punctured lens space. The Heegaard surface $\boundary_+ C = \boundary_+ D$ in $W$ is disjoint from $T$. Indeed, after capping off $P \subset \boundary W$ with a $(B^3, \text{ arc})$, we obtain either $(S^3, \text{ Hopf link})$ or (lens space, Hopf link), or (lens space, core loop). This contradicts our initial assumption on $(M,T)$. Consequently, no component of $(M,T)\setminus \mc{H}$ is (solid torus, $\nil$) or (solid torus, core loop). 

We can now conclude the proof.

By Claims (1) and (2), each component of $(M,T) \setminus \mc{H}$ is either a trivial ball compressionbody or a v.p.-compressionbody of Type (4) above.  Recall that all spheres in $M$ separate. If $\mc{H}^-$ contains a sphere, choose such a sphere $P \cpt \mc{H}^-$, so that $P$ bounds a submanifold $W \subset M$ such that there are no spherical components of $\mc{H}^-$ in the interior of $W$. Since $M$ is closed, $\boundary W = P$.   If $\mc{H}^-$ does not contain any spheres, let $W = M$.

Consider a v.p.-compressionbody $(C, T_C) \cpt (W, T \cap W)\setminus \mc{H}$. We claim that $(C, T_C)$ is not a trivial ball compressionbody. If it is, let $(D, T_D)$ be the other v.p.-compressionbody such that $\boundary_+ D = \boundary_+ C$. Observe that $D \subset W$, since $C$ is a 3--ball contained in $W$ and disjoint from $P$. 

If $(D, T_D)$ is a trivial ball-compressionbody, then $(M,T) = (C, T_C) \cup (D, T_D) = (S^3, \text{ unknot})$, so suppose it is of Type (4). In particular, $\boundary_- D \neq \nil$. Since $\boundary_+ D$ is a sphere, each component of $\boundary_- D$ is a sphere. Consider the ghost arc graph $\Gamma$. By Corollary \ref{delta zero}, $\Gamma$ is a tree with the number of edges equal to $|\boundary_- D| - 1$. If there are no edges (i.e. $|\boundary_- D| = 1$), then $(D, T_D)$ would be a product, a contradiction. Hence, $|\boundary_- D| \geq 2$. But this contradicts our choice of $P$ to be innermost. Thus, $(C, T_C)$ is a v.p.-compressionbody of Type (4). In particular, $\boundary_- C \neq \nil$.

Without loss of generality, we may assume that the transverse orientation on $P$ points into $W$. Consider a thick surface $H \cpt \mc{H}^+ \cap W$. Let $(C_1, T_1) \cpt (M,T)\setminus \mc{H}$ be the v.p.-compressionbody such that the tranverse orientation on $H$ points into $C_1$. This implies that $P \not \subset \boundary_- C_1$. Since $\boundary_- C_1 \neq \nil$ and $\boundary_- C_1 \not \subset \boundary M$, there is another v.p.-compressionbody $(C_2, T_2)$, distinct from $(C_1, T_1)$, such that $\boundary_- C_1 \cap \boundary_- C_2 \neq \nil$. Thus, there is a flow line from $H= \boundary_+ C_1$ to the thick surface $\boundary_+ C_2$. Repeatedly applying this same argument, we can construct a flow line beginning at $P$ and intersecting $\mc{H}^+$ $n$ times, for any $n \in \N$. Since $\mc{H}^+$ has only finitely many components, we conclude that there is a closed flow line, contradicting the fact that $\mc{H}$ is oriented. 
 \end{proof}

\begin{remark}
The minimal bridge sphere for the unknot is a v.p.-bridge surface for the unknot, showing that $\netextent_{-2}(S^3, \text{ unknot }) = 0$. Similarly, if $M$ has a genus one Heegaard splitting (i.e. is $S^3$, $S^1 \times S^2$, a lens space, $T^2 \times I$, or solid torus) and if $L$ is a one or two component link which is the union of cores for the solid tori in the Heegaard splitting, then $\netextent_0(M,L) = 0$. However, as we will see, net extent and width are additive under connected sum and so taking the connected sum with (lens space, core loop) or (solid torus, core loop) will not change net extent or width. Thus, if $(M,T)$ is any irreducible 3--manifold pair satisfying the running assumption and with the property that every sphere in $M$ separates and $T\neq \nil$, and if $\netextent_x(M,T) = 0$ for $x\geq 0$, then $(M,T)$ is either  $(S^3, \text{ unknot})$, ($S^3$, Hopf link), (lens space, Hopf link), (lens space, core loop), (solid torus, core loop), or is a connected sum with each summand being one of those pairs.

The hypothesis that all spheres separate cannot be dropped. To see this, let $M = S^1 \times S^2$ and consider the closure of a 2-stranded braid $T$ in $M$. We can construct an oriented multiple v.p.-bridge surface $\mc{H}$ for $(M,T)$ as in Figure \ref{fig:Netext0ce}. This oriented multiple v.p.-bridge surface has $\netextent(\mc{H}) = 0$.
\end{remark}

\begin{figure}[ht]
\centering
\includegraphics[scale=0.5, angle=90]{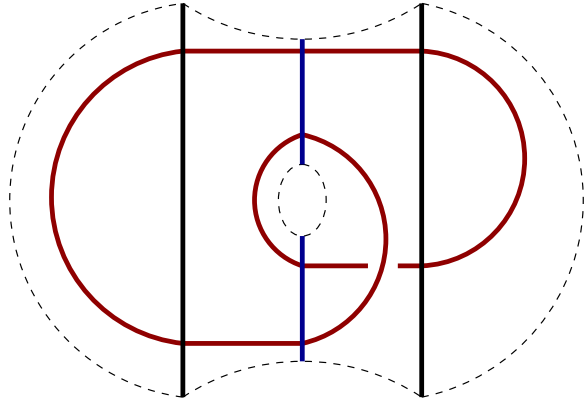}
\caption{An oriented multiple v.p.-bridge surface $\mc{H}$ for $(S^1 \times S^2, T)$ where $T$ is the closure of a  2-stranded braid. All the vertical lines are spheres, with each of the blue lines representing a non-separating sphere. The black lines are thick surfaces and the blue lines are thin surfaces. Each v.p.-compressionbody is either a trivial ball compressionbody or is a twice-punctured 3--ball containing a single vertical arc and a single ghost arc. The orientation on $\mc{H}$ isn't shown.}
\label{fig:Netext0ce}
\end{figure}

\section{Additivity of Net Extent and Width}
In this section we prove that net extent and width satisfy an additivity property with respect to connected sum and trivalent vertex sum. In fact, apart from some hypotheses on $M$ and $T$, the only properties of net extent and width that we use are that they are order preserving with respect to extended thinning sequences and that they depend on euler characteristic and the number of intersections with $T$. For convenience, therefore, and with a view to the fact that there are other invariants which have similar properties we prove our additivity theorem in a rather abstract setting. Theorem \ref{thm: Super-additivity} shows that super-additivity holds; Theorem \ref{subadditive} shows that sub-additivity holds; and Theorem \ref{Main Theorem} puts those together to show the additivity result for net extent and width.

We begin by relating the thin levels of a locally thin multiple v.p.-bridge surfaces to a prime decomposition.

\begin{proposition}\label{Thin summing spheres}
Assume that $(M,T)$ is non-trivial and that $T$ is irreducible. Suppose that $\mc{H} \in \H(M,T)$ is locally thin. Then there exists a subset $\mc{P} \subset \mc{H}^-$ such that $\mc{P}$ is the union of decomposing spheres giving a prime decomposition of $(M,T)$.
\end{proposition}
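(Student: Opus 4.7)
The plan is to let $\mc{P}\subset\mc{H}^-$ consist of all spheres in $\mc{H}^-$ that meet $T$ in at most three points. By Theorem \ref{Properties Locally Thin}(2), each component of $\mc{H}^-$ is c-essential, so every such $F$ is essential in $(M,T)$, and the running assumption forces $|F\cap T|\in\{0,2,3\}$, which are exactly the allowed intersection numbers for summing spheres. Cutting $(M,T)$ open along $\mc{P}$ and capping each boundary sphere off by the appropriate trivial ball pair (namely $(B^3,\nil)$ for empty spheres, a trivial bridge arc for twice-punctured spheres, or a trivial $Y$-graph in $B^3$ for thrice-punctured spheres) yields pieces $(\wihat{M}_1,\wihat{T}_1),\ldots,(\wihat{M}_n,\wihat{T}_n)$ that sequentially sum back to $(M,T)$.

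What remains is to verify the decomposition is prime. The central step is to show that no summand $(\wihat{M}_i,\wihat{T}_i)$ admits an essential sphere $P$ meeting $\wihat{T}_i$ in $1$, $2$, or $3$ points. Irreducibility of $T$ rules out one intersection, so suppose $|P\cap\wihat{T}_i|\in\{2,3\}$. After isotoping $P$ off the trivial balls used to form $\wihat{M}_i$, we may regard $P$ as a sphere in $M$ disjoint from $\mc{P}$. A brief argument shows $P$ remains essential in $(M,T)$, since any $T$-disjoint ball it bounded in $M$ would lie inside $\wihat{M}_i$, contradicting essentiality there, and incompressibility is local. Theorem \ref{Properties Locally Thin}(4) then produces a sphere $F\in\mc{H}^-$ that is essential with $|F\cap T|\le|P\cap T|$ and is obtained from $P$ by a sequence of isotopies and sc-compressions; by the definition of $\mc{P}$ we have $F\in\mc{P}$. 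Since the moves connecting $P$ to $F$ may be arranged to stay inside the piece $\wihat{M}_i$, the sphere $P$ is isotopic or sc-compressible to a boundary sphere of $\wihat{M}_i$, forcing $P$ to be boundary-parallel and contradicting its essentiality.

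Finally, trivial summands need to be handled. If some $(\wihat{M}_i,\wihat{T}_i)$ happens to equal $(S^3,\text{unknot})$, then the adjacent component of $\mc{P}$ is a twice-punctured sphere cobounding a trivial arc pair with a parallel summing sphere, and discarding it from $\mc{P}$ simplifies the decomposition without changing $(M,T)$. Iterating this clean-up ensures every trivial summand is $(S^3,\text{trivial }\theta)$ and is joined only by connected sums, as required by the definition of prime decomposition.

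The main obstacle is the boundary-parallelism step: to pass from ``$F$ is obtained from $P$ by isotopies and sc-compressions'' to ``$P$ is boundary-parallel in $\wihat{M}_i$'', one must control where the moves of Theorem \ref{Properties Locally Thin}(4) take place, ensuring they happen within a single piece of $(M,T)\setminus\mc{P}$ so that the inferred parallelism is established in that piece rather than spread across the whole of $(M,T)$. Establishing this localization is the technical heart of the argument.
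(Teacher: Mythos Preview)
Your overall strategy matches the paper's, but the step you flag as ``the technical heart'' is a genuine gap, and the paper sidesteps it rather than proving it. You apply Theorem~\ref{Properties Locally Thin}(4) in $(M,T)$ and then try to localize the resulting isotopies and sc-compressions to a single piece $M_i$; you give no argument for why this localization is possible. Worse, even granting the localization, your conclusion does not follow: knowing that $P$ can be taken by isotopies and sc-compressions to some $F\subset\partial M_i$ does \emph{not} show $P$ is boundary-parallel. For instance, a cut-compression of a thrice-punctured $P$ can produce a twice-punctured sphere (which might be the $F$) together with another thrice-punctured sphere; nothing here forces $P$ itself to be parallel to $F$. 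So the contradiction you want never materializes.

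The paper avoids both problems by reversing the order of operations: instead of applying Theorem~\ref{Properties Locally Thin} globally and localizing afterward, it first observes that the restriction $\mc{H}_i = \mc{H}\cap\operatorname{int}(M_i)$ is itself a locally thin multiple v.p.-bridge surface for $(M_i,T_i)$, and then applies Theorem~\ref{Properties Locally Thin}(4) \emph{inside} $(M_i,T_i)$. This immediately gives an essential twice- or thrice-punctured sphere as a component of $\mc{H}_i^-$, which is impossible since all such spheres were removed when forming $\mc{Q}$. No tracking of sc-compressions, and no boundary-parallelism claim, is needed. A minor side point: you include $0$-punctured spheres in $\mc{P}$, but the paper's summing operations $\#_2$ and $\#_3$ never produce such spheres, so (like the paper) you should take $\mc{P}$ to consist only of twice- and thrice-punctured spheres in $\mc{H}^-$.
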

\begin{proof}
If $(M,T)$ is prime, then it is its own prime decomposition and setting $\mc{P} = \nil$, we are done. Assume that $(M,T)$ contains at least one essential twice or thrice punctured sphere.

Let $\mc{Q}$ be the union of all the twice and thrice-punctured spheres in $\mc{H}^-$. Let $(M_i, T_i)$ be a component of $(M,T) \setminus \mc{Q}$. Let $(\wihat{M}_i, \wihat{T}_i)$ be the result of capping off twice punctured spheres (corresponding to copies of components of $\mc{Q}$) in $\boundary M_i$ with a trivial $(B^3, \text{ arc})$ and capping off thrice punctured spheres  (corresponding to copies of components of $\mc{Q}$) of $F$ in $\boundary M_i$ with a 3-ball containing a boundary parallel tree with a single internal vertex. 

We claim that $(\wihat{M}_i, \wihat{T}_i)$ contains no essential twice or thrice-punctured sphere. Suppose, to the contrary, that such a sphere $F$ exists. Since each component of $\mc{Q}$ is essential in $(M,T)$, the surface $F$ is also essential in $(M_i, T_i)$. It is easy to see that the intersection $\mc{H}_i$ of $\mc{H}$ with the interior of $M_i$ is still a locally thin, linear, multiple v.p.-bridge surface for $(M_i, T_i)$.  Thus, by Theorem \ref{Properties Locally Thin}, there exists an essential twice or thrice-punctured sphere $P \cpt \mc{H}^-_i$. However, $\mc{H}^-_i \subset \mc{H}^-$ does not contain any essential twice or thrice-punctured spheres by the definition of $\mc{Q}$ and $\mc{H}_i$. 

Suppose that $(M_i, T_i)$ has the property that $(\wihat{M}_i, \wihat{T}_i)$ is $(S^3, \text{ unknot })$. Since each component of $\mc{Q}$ is essential in $(M,T)$, $\boundary M_i$ has multiple components, each a sphere intersecting $T$ twice. Let $P$ be one such component, and let $(M_j, T_j)$ be the component of $(M,T)\setminus \mc{Q}$ adjacent to $P$ and not equal to $(M_i, T_i)$. Let $(M', T') = (M_i, T_i) \cup (M_j, T_j)$ and let $(\wihat{M}', \wihat{T}')$ be the result of capping off components of $\boundary M'$ corresponding to components of $\mc{Q} \setminus P$. Then $(\wihat{M}', \wihat{T}')$ is the connected sum of $(\wihat{M}_j, \wihat{T}_j)$ with $(S^3, \text{ unknot })$. It is thus homeomorphic to $(\wihat{M}_j, \wihat{T}_j)$ and so does not contain an essential twice or thrice-punctured sphere. Continuing on in this vein, we may remove some number of components from $\mc{Q}$ to obtain $\mc{P}'$ such that if $(\wihat{M}', \wihat{T}')$ is obtained by capping off components of $\boundary M'$ corresponding to components of $\mc{P}'$, where $(M', T') \cpt (M,T)\setminus \mc{P}'$ then $(\wihat{M}', \wihat{T}')$ neither contains an essential twice or thrice-punctured sphere nor is $(S^3, \text{ unknot })$. 

Suppose now that $(M_i, T_i)$ has the property that $(\wihat{M}_i, \wihat{T}_i)$ is $(S^3, \wihat{T}_i)$ with $\wihat{T}_i$ a trivial $\theta$-graph and with some component $P$ of $\boundary M_i$ a thrice-punctured sphere. Let $(M_j, T_j) \cpt (M,T)\setminus \mc{P}'$ be on the other side of $P$ from $(M_i, T_i)$. Let $(M', T') = (M_i, T_i) \cup (M_j, T_j)$ and let $(\wihat{M}', \wihat{T}')$ be the result of capping off components of $\boundary M'$ corresponding to components of $\mc{P}' \setminus P$. Then $(\wihat{M}', \wihat{T}')$ is the trivalent vertex sum of $(\wihat{M}_j, \wihat{T}_j)$ with $(S^3, G)$, where $G$ is a trivial $\theta$-graph.  It is thus homeomorphic to $(\wihat{M}_j, \wihat{T}_j)$ and so does not contain an essential twice or thrice-punctured sphere. Continuing on in this vein, we can remove some number of components from $\mc{P}'$ to arrive at $\mc{P}$, the union of some components of $\mc{P}'$, which give a prime decomposition of $(M,T)$.
\end{proof}

Let $\M$ be a non-empty set whose elements are irreducible (3-manifold, graph) pairs $(M,T)$ satisfying the running assumption with $M$ connected such that  every sphere in $M$ separates. Suppose that $\M$ has the property that if $(M,T)  \in \M$ and $M = (M_1, T_1) \# (M_2, T_2)$ or $M = (M_1, T_1)\#_3 (M_2, T_2)$ then both $(M_1, T_1)$ and $(M_2, T_2)$ are also elements of $\M$. Let $\S$ denote the set of closed surfaces $S \subset (M,T)$ for some $(M,T) \in \M$ and let $\S_0 \subset \S$ be the subset of connected surfaces. Let $X = (\Z \times \Z) \cap ([-2, \infty)\times [0, \infty))$ and let $r \co \S_0 \to \R$ be any function which factors through the function $\S_0 \to X$ defined by $S \mapsto (-\chi(S), |T \cap S|)$. (That is, $r$ depends only on $-\chi(S)$ and $|S \cap T|$.) Extend $r$ to a function $r\co \S \to \R$ linearly, that is if $S_1, S_2 \in \S_0$ are disjoint then $r(S_1 \cup S_2) = r(S_1) + r(S_2)$. 

\begin{example}\label{r examples 1}
If $S \subset \S$ and $k \in \N$, then the function $r(S) = \sum_{i} \extent^k(S_i)$ where the sum is over all the components $S_i$ of $S$ is such a function.
\end{example}

For $(M,T) \in \M$, recall that $\H(M,T)$ is the set of reduced, linear, multiple v.p.-bridge surfaces for $(M,T)$. Let $\H = \bigcup\limits_{(M,T) \in \M} \H(M,T)$. Define $\net r \co \H \to \R$ by
\[
\net r(\mc{H}) = r(\mc{H}^+) - r(\mc{H}^-).
\]

If $x \geq 2g(M) - 2$ and $(M,T) \in \M$, we say that $x$ is \defn{realizable} for $(M,T)$. If $x$ is realizable for  $(M,T)$, let $\net r_x(M,T) = \min_{\mc{H}} \net r(\mc{H})$ where the minimum is over all $\mc{H} \in \H(M,T)$ such that $\netchi(\mc{H}) \leq x$. We say that $r$ is \defn{order-preserving} on $\H$ if whenever $\mc{H}, \mc{K} \in \H$ and $\mc{H} \more \mc{K}$, then $\net r(\mc{H}) \geq \net r(\mc{K})$. Let $r_2$ be the value of $r$ on a sphere twice punctured by $T$ and $r_3$ be the value of $r$ on a sphere thrice-punctured by $T$.

\begin{example}\label{r examples 2}
If we choose $\M$ to be the set of all irreducible $(M,T)$  satisfying the running assumption with $M$ connected and every sphere separating. Let $r\co \S_0 \to \R$ to be $r(S) = \extent(S)$, then $\net r = \netextent$ and $\net r_x(M,T) = \netextent(M,T)$. In this case, $r_2 = 0$ and $r_3 = 1/2$. Likewise, if we also insist that either $g(M) \leq 2$ or $M$ contains no closed non-separating surface and define $r \co \S_0 \to \R$ to be $r(S) = 2\extent^2(S)$, then $\net r = \width$ and $\net r_x(M,T) = \width_x(M,T)$. In this case also, $r_2 = 0$ and $r_3 = 1/2$.
\end{example}

Return to the general situation, where $\M$ is any non-empty set whose elements are irreducible (3-manifold, graph) pairs $(M,T)$ satisfying the running hypothesis and where $M$ is connected, and every sphere in $M$ separates. Assume also that $\M$ is closed under taking factors of connected sum and trivalent vertex sum. 

\begin{theorem}[Super-additivity]\label{thm: Super-additivity}
Suppose that $(M,T) \in \M$ is non-trivial and that $x$ is realizable for $(M,T)$ and that $r$ is order-preserving on $\H$. Then there is a prime factorization of $(M,T)$ into $(\wihat{M}_1, \wihat{T}_1), \hdots, (\wihat{M}_n, \wihat{T}_n)$ and so that there exist integers $x_1, \hdots, x_n$ summing to at most $x - 2(n-1)$ so that $x_i$ is realizable for $(\wihat{M}_i, \wihat{T}_i)$ and
\[
\net r_x(M,T) \geq  -p_2r_2 - p_3 r_3+ \sum_{i = 1}^n \net r_{x_i}(\wihat{M}_i, \wihat{T}_i).
\]
where $p_2$ is the number of connected sums and $p_3$ is the number of trivalent vertex sums in the decomposition.
\end{theorem}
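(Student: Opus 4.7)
The plan is to start with a reduced, oriented multiple v.p.-bridge surface $\mc{H} \in \H(M,T)$ realizing $\net r_x(M,T)$ with $\netchi(\mc{H}) \leq x$, then invoke Theorem \ref{partial order} to obtain a locally thin $\mc{K} \in \H(M,T)$ with $\mc{H} \more \mc{K}$. Since $r$ is order-preserving on $\H$, we have $\net r(\mc{K}) \leq \net r(\mc{H}) = \net r_x(M,T)$, and by Corollary \ref{thm:Thinning invariance}, $\netchi(\mc{K}) \leq \netchi(\mc{H}) \leq x$. Because $(M,T)$ is non-trivial and $T$ is irreducible (as $(M,T) \in \M$ is irreducible), Proposition \ref{Thin summing spheres} applies to furnish a subcollection $\mc{P} \subset \mc{K}^-$ whose components are exactly the summing spheres of a prime decomposition $(\wihat{M}_1,\wihat{T}_1), \ldots, (\wihat{M}_n,\wihat{T}_n)$ of $(M,T)$; since the dual graph is a tree on $n$ vertices, $|\mc{P}| = n-1$, and each component of $\mc{P}$ is either twice-punctured (contributing to $p_2$) or thrice-punctured (contributing to $p_3$).

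Next, for each $(M_i, T_i) \cpt (M,T)\setminus \mc{P}$, I would form $(\wihat{M}_i, \wihat{T}_i)$ by capping off each twice-punctured boundary sphere with a trivial ball-arc pair and each thrice-punctured boundary sphere with a ball containing a boundary-parallel tree with a single internal vertex, as in the proof of Proposition \ref{Thin summing spheres}. Let $\mc{K}_i$ be the restriction of $\mc{K}$ to the interior of $M_i$, viewed as a subsurface of $(\wihat{M}_i, \wihat{T}_i)$, with $\mc{K}_i^\pm = (\mc{K}^\pm \setminus \mc{P}) \cap M_i$. The transverse orientation and multiple v.p.-bridge structure descend to each $\mc{K}_i$, and local thinness of $\mc{K}$ ensures each $\mc{K}_i$ is reduced in $(\wihat{M}_i, \wihat{T}_i)$ (any destabilization, unperturbation, removable-arc-removal, or consolidation available to $\mc{K}_i$ would lift to one for $\mc{K}$, contradicting local thinness), so $\mc{K}_i \in \H(\wihat{M}_i, \wihat{T}_i)$.

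The arithmetic is then immediate. Because $\mc{P} \subset \mc{K}^-$ and $r$ is linear over disjoint unions,
\[
r(\mc{K}^+) = \sum_{i=1}^n r(\mc{K}_i^+), \qquad r(\mc{K}^-) = r(\mc{P}) + \sum_{i=1}^n r(\mc{K}_i^-),
\]
and since $r(\mc{P}) = p_2 r_2 + p_3 r_3$, we get
\[
\net r(\mc{K}) = \sum_{i=1}^n \net r(\mc{K}_i) - p_2 r_2 - p_3 r_3.
\]
Setting $x_i = \netchi(\mc{K}_i)$, a parallel computation using $\chi(\mc{P}) = 2(n-1)$ gives $\netchi(\mc{K}) = \sum_i x_i + 2(n-1)$, hence $\sum_i x_i \leq x - 2(n-1)$. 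The bound $\net r(\mc{K}_i) \geq \net r_{x_i}(\wihat{M}_i, \wihat{T}_i)$ is just the definition of the minimum, so chaining these together with $\net r_x(M,T) = \net r(\mc{H}) \geq \net r(\mc{K})$ yields the desired inequality.

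The main subtlety lies in verifying that each $x_i$ is realizable for $(\wihat{M}_i, \wihat{T}_i)$, i.e. $x_i \geq 2g(\wihat{M}_i) - 2$. This can be established by amalgamating $\mc{K}_i$ to a genuine Heegaard surface $H_i$ of $\wihat{M}_i$ in the sense of Schultens \cite{Schultens}, since the amalgamation procedure (after ignoring $\wihat{T}_i$) yields $-\chi(H_i) = \netchi(\mc{K}_i) = x_i$, forcing $x_i \geq 2g(\wihat{M}_i) - 2$. A related point that must be handled cleanly is confirming that capping off the summing spheres with the prescribed trivial pieces extends the v.p.-compressionbodies of $(M,T)\setminus \mc{K}$ adjacent to $\mc{P}$ to v.p.-compressionbodies of $(\wihat{M}_i, \wihat{T}_i)\setminus \mc{K}_i$; this is essentially a bookkeeping check on how the capping balls attach to the existing compressionbody structure, but it is the step that most deserves explicit care.
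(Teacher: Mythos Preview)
Your proposal is correct and follows essentially the same strategy as the paper's proof: minimize, thin to a locally thin $\mc{K}$, use Proposition \ref{Thin summing spheres} to find summing spheres inside $\mc{K}^-$, then split up $\net r$ and $\netchi$ across the pieces. The paper is terser on the points you flag (it simply asserts ``Note that $x_i$ is realizable'' and that $\net r_{x_i}(\wihat{M}_i,\wihat{T}_i) \leq \net r(\mc{K}_i)$ without further comment), so your extra care about amalgamation and about the capped-off pieces remaining v.p.-compressionbodies is appropriate rather than superfluous.
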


\begin{proof}
Let $(M,T) \in \M$. Since $x$ is realizable, there exists $\mc{H} \in \H(M,T)$ such that $\netchi(\mc{H}) \leq x$ and  $\net r_x(M,T) = \net r(\mc{H})$. Let $\mc{K}\in \H(M,T)$ be locally thin so that $\mc{H} \more \mc{K}$. By Theorem \ref{partial order}, such a $\mc{K}$ exists. Since $r$ is order-preserving, $\net r(\mc{K}) \leq \net r(\mc{H})$. By Lemma \ref{thm:Thinning invariance}, \[\netchi (\mc{K}) \leq \netchi(\mc{H}) \leq x.\] Thus, by our choice of $\mc{H}$, $\net r(\mc{K}) = \net r(\mc{H})$.

By Proposition \ref{Thin summing spheres}, there is a subset $\mc{P} \subset \mc{K}^-$ which is the union of components such that $\mc{P}$ are the summing spheres giving a prime decomposition of $(M,T)$. Split $(M,T)$ open along $\mc{P}$ to obtain components $(M_1, T_1), \hdots, (M_n, T_n) \cpt (M,T)\setminus \mc{P}$. Let $(\wihat{M}_i, \wihat{T}_i)$ be the result of capping off the components of $\mc{P}$ in $\boundary M_i$ with trivial ball compressionbodies, so that $(\wihat{M}_1, \wihat{T}_1), \hdots, (\wihat{M}_n, \wihat{T}_n)$ give a prime decomposition of $(M,T)$. Let $p_2$ be the number of twice-punctured spheres in $\mc{P}$. Let $p_3$ be the number of thrice-punctured spheres in $\mc{P}$. Observe that $p_2 + p_3 = n - 1$. Let $x_i = \netchi(\mc{K}_i)$. Note that $x_i$  is realizable for $(\wihat{M}_i, \wihat{T}_i)$. Furthermore,
\[
\sum x_i = \netchi(\mc{K}) - 2(n-1) \leq  x - 2(n-1),
\]
since spheres have Euler characteristic equal to 2. Consequently, we have $x_i$ summing to at most $x - 2(n-1)$ and a prime decomposition $(\wihat{M}_i, \wihat{T}_i)$ of $(M,T)$ so that $\net r_{x_i} (\wihat{M}_i, \wihat{T}_i) \leq \net r (\mc{K}_i)$. Notice that:
\[
\net r_x(M,T) \geq \net r(\mc{K}) = \sum\limits_{H \cpt \mc{H}^+}  r(H) - \sum\limits_{F \cpt \mc{H}^-\setminus \mc{P}} r(S) - p_2 r_2 - p_3 r_3.
\]
Splitting up the sums according to which $(M_i, T_i)$ contains the surface  shows that
\[
\net r_x(M,T) \geq \sum_{i = 1}^n \net r(\mc{K}_i) - p_2 r_2 - p_3 r_3 \geq \sum_{i=1}^n \net r_{x_i}(\wihat{M}_i, \wihat{T}_i) - p_2 r_2 - p_3 r_3.
\]
\end{proof}

\begin{theorem}[Sub-additivity]\label{subadditive}
Let $(M,T) \in \M$ is non-trivial and that $\net r$ does not increase under consolidation, generalized destabilization, unperturbing, or undoing a removable arc. Suppose that $(M,T)$ is the connected sum and trivalent vertex sum of non-trivial pairs $(\wihat{M}_1, \wihat{T}_1), \hdots, (\wihat{M}_n, \wihat{T}_n)$, that $x$ is realizable for $(M,T)$ and so that there are integers $x_1, \hdots, x_n$ such that each $x_i$ is realizable for $(\wihat{M}_i, \wihat{T}_i)$ and $x_1 + \cdots + x_n \leq x - 2(n-1)$. Then
\[
\net r_x(M,T) \leq \sum_{i=1}^n \net r_{x_i}(\wihat{M}_i, \wihat{T}_i) - p_2r_2 - p_3 r_3,
\] 
\end{theorem}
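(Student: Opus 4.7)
The plan is to reverse the super-additivity construction: assemble optimal multiple v.p.-bridge surfaces for the summands into one for $(M,T)$ with the summing spheres as new thin surfaces, and then reduce.

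For each $i$, choose $\mc{H}_i \in \H(\wihat{M}_i, \wihat{T}_i)$ with $\netchi(\mc{H}_i) \leq x_i$ and $\net r(\mc{H}_i) = \net r_{x_i}(\wihat{M}_i, \wihat{T}_i)$. Isotope $\mc{H}_i$ off the summing point $p_i \in \wihat{T}_i$ so that $p_i$ lies in the interior of some v.p.-compressionbody $(C_i^\wihat, T_{C_i}^\wihat)$ of $(\wihat{M}_i, \wihat{T}_i) \setminus \mc{H}_i$. Choose a small open ball $B_i$ around $p_i$ whose intersection with $\wihat{T}_i$ is a short $\boundary$-parallel arc (in the connected sum case) or a neighborhood of the trivalent vertex $p_i$ with three short $\boundary$-parallel edges (in the trivalent vertex sum case). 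Perform the sums by removing the $B_i$'s and gluing along $P_i = \boundary B_i$ to form $(M,T)$; let $\mc{P}$ be the resulting union of $n-1$ summing spheres, of which $p_2$ are twice-punctured and $p_3$ are thrice-punctured (so $p_2 + p_3 = n-1$). Set
\[
\mc{H}^+ = \bigsqcup_{i=1}^n \mc{H}_i^+, \qquad \mc{H}^- = \left(\bigsqcup_{i=1}^n \mc{H}_i^-\right) \cup \mc{P},
\]
so that each summing sphere becomes a thin surface of $\mc{H}$.

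Next verify that $\mc{H}$ is an oriented multiple v.p.-bridge surface for $(M,T)$. Away from $\mc{P}$, the v.p.-compressionbody structure is inherited from the various $\mc{H}_i$. The v.p.-compressionbody on each side of a summing sphere $P_i$ is $(C_i^\wihat \setminus B_i, T_{C_i}^\wihat \setminus B_i)$; a complete collection of sc-discs for this piece is obtained from one for $(C_i^\wihat, T_{C_i}^\wihat)$ put in general position with respect to $B_i$, together (if needed) with an auxiliary semi-compressing disc isolating $P_i$, in a manner that mirrors the inverse construction used in Proposition \ref{Thin summing spheres}. Orient each summing sphere consistently with the tree-like summing structure of the decomposition; any closed flow line would either lie entirely in one summand (contradicting orientedness of the corresponding $\mc{H}_i$) or cross some summing sphere in both directions, contradicting the consistent orientation.

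The constructed $\mc{H}$ may not be reduced, so apply generalized destabilizations, unperturbings, consolidations, and undoings of removable arcs until reduced, yielding $\mc{H}' \in \H(M,T)$. By hypothesis these moves do not increase $\net r$, and they do not increase $\netchi$. Since $\chi$ of a sphere is $2$,
\[
\netchi(\mc{H}) = \sum_{i=1}^n \netchi(\mc{H}_i) + 2(n-1) \leq \sum_{i=1}^n x_i + 2(n-1) \leq x,
\]
so $\netchi(\mc{H}') \leq x$ and $\mc{H}'$ is admissible for $\net r_x(M,T)$. Using linearity of $r$ over components and the values $r_2, r_3$ on twice- and thrice-punctured spheres,
\[
\net r(\mc{H}) = \sum_{i=1}^n \net r(\mc{H}_i) - r(\mc{P}) = \sum_{i=1}^n \net r_{x_i}(\wihat{M}_i, \wihat{T}_i) - p_2 r_2 - p_3 r_3,
\]
whence $\net r_x(M,T) \leq \net r(\mc{H}') \leq \net r(\mc{H})$, which is the desired inequality. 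The main technical point is the verification that each $(C_i^\wihat \setminus B_i, T_{C_i}^\wihat \setminus B_i)$ is a v.p.-compressionbody; this amounts to a local handle-theoretic check driven by the prescribed intersection pattern of $B_i$ with $\wihat{T}_i$ within whichever trivial compressionbody of the reduction of $(C_i^\wihat, T_{C_i}^\wihat)$ contains~$B_i$.
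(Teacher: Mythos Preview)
Your approach is essentially the same as the paper's: choose optimal $\mc{H}_i$, remove small balls around the summing points, declare the summing spheres to be new thin surfaces, then reduce. Your $\netchi$ computation $\netchi(\mc{H}) = \sum_i \netchi(\mc{H}_i) + 2(n-1)$ is correct (the paper actually has a sign typo here), and your verification that removing a ball from a v.p.-compressionbody yields a v.p.-compressionbody is, if anything, more careful than the paper's.

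There is, however, a real gap in your orientation argument. You write ``Orient each summing sphere consistently with the tree-like summing structure,'' and then argue that a closed flow line would have to cross a summing sphere in both directions. But before you can speak of flow lines at all, you must check that the transverse orientations are \emph{coherent on each v.p.-compressionbody}: for the compressionbody $(C_i^\wihat \setminus B_i, \cdot)$, the orientation you impose on the new negative boundary sphere $P$ must agree with the one already induced by the existing orientation on $\boundary_+ C_i^\wihat \subset \mc{H}_i$. Nothing guarantees this; if you simply orient the $P$'s from the tree and leave the $\mc{H}_i$ alone, the two orientations on $P$ coming from the two adjacent summands may clash. The paper resolves this by observing that one may \emph{turn $\mc{H}_i$ upside down} (reverse all its transverse orientations) without changing $\net r$ or $\netchi$, and then processes the summands inductively along the rooted tree: when $(M_i,T_i)$ is reached, it is attached to its parent across a single sphere, and flipping $\mc{H}_i$ if necessary makes that one sphere coherent; its remaining boundary spheres lead to children not yet processed, so no constraint is violated. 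You should incorporate this step.

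A minor notational point: you write ``the summing point $p_i \in \wihat{T}_i$'' in the singular, but since the dual graph is a tree with $n$ vertices and $n-1$ edges, some summands carry several summing points; the construction and the inductive flipping argument have to accommodate that.
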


\begin{figure}[ht]
\labellist \small\hair 2pt 
\pinlabel {$G_1$} [l] at 135 332
\pinlabel {$G$} [b] at 445 471 
\pinlabel {The result of inserting $G_1$ into $G$} [r] at 325 114
\endlabellist 
\centering
\includegraphics[scale=.5]{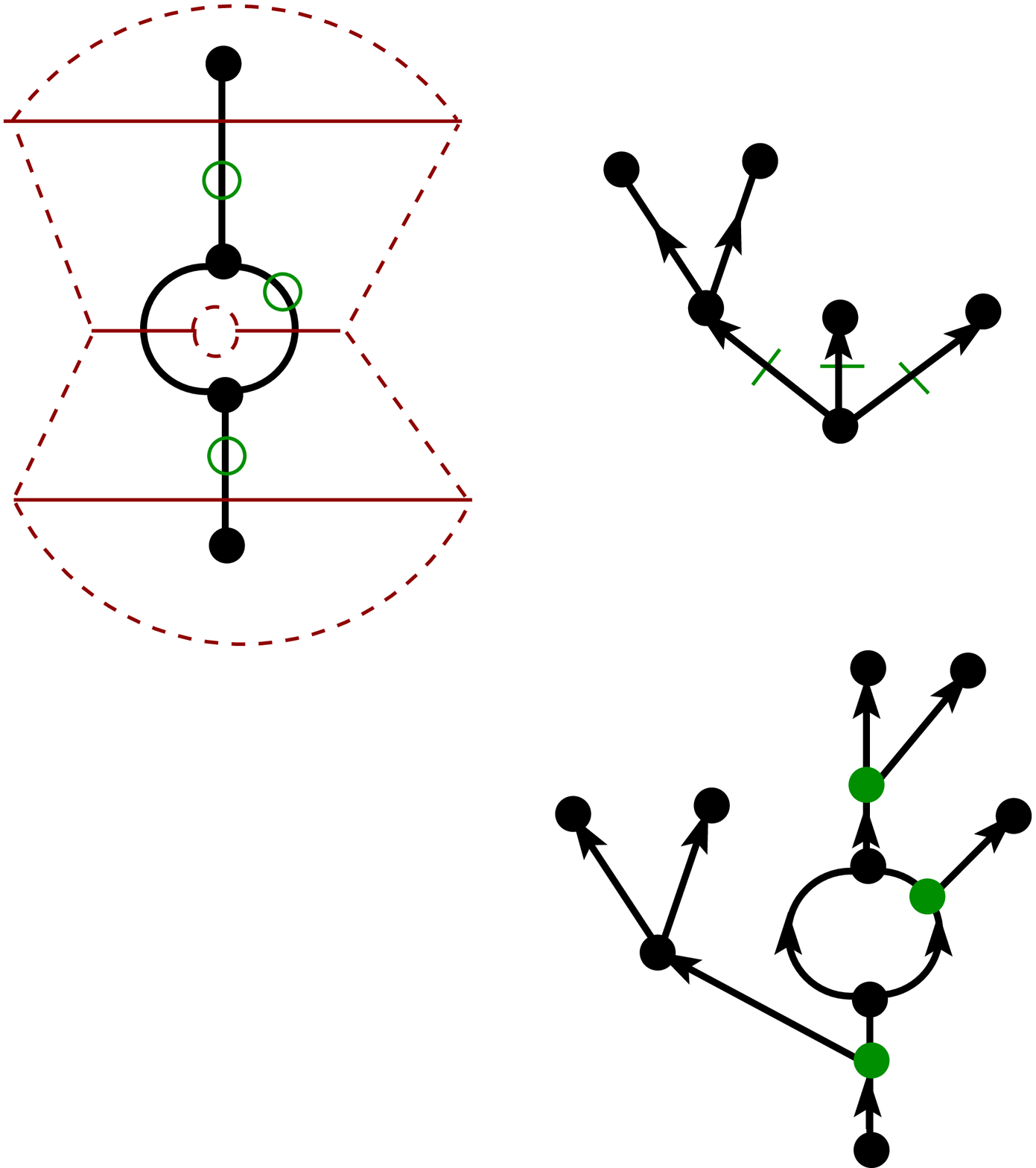}
\caption{The first step of turning $\mc{H}$ into an oriented v.p.-bridge surface. We insert the graph $G_1$ into the tree $G$ at the root, ensuring that the orientations of the edges are consistent. The green circles, lines, and dots indicate the points $p_1$ and the spheres $P_1$.}
\label{fig: trees}
\end{figure}

\begin{proof}
Recall that, by the definition of $\M$, each $(\wihat{M}_i, \wihat{T}_i) \in \M$. For each $i \in \{1, \hdots, n\}$, let $p_i \subset T_i$ be the union of points where $(\wihat{M}_i, \wihat{T}_i)$ is summed to one of the other (3-manifold, graph)-pairs. For each $i$, $|p_i| \geq 1$.  The graph in $M$ dual to the summing spheres is a finite tree. All finite trees have one more vertex than edge. Hence, we have $\sum_{i=1}^n |p_i|  = 2(n-1)$. For each $i$, let $\mc{H}_i \in \H(\wihat{M}_i, \wihat{T}_i)$ be such that $\netchi(\mc{H}_i) \leq x_i$ and $\net r (\mc{H}_i) = \net r_{x_i}(\mc{H}_i)$.

By general position, we may assume that $\mc{H}_i \cap p_i = \nil$. Notice that we can create another oriented multiple v.p.-bridge surface $\mc{H}'_i$ for $(\wihat{M}_i, \wihat{T}_i)$ by reversing all the transverse orientations. We call this \defn{turning $\mc{H}_i$ upside down}. Turning $\mc{H}_i$ upside down does not change $\net r(\mc{H}_i)$ or $\netchi(\mc{H}_i)$.

Let $(M_i, T_i)$ be the result of removing a small open regular neighborhood of $p_i$ from $(\wihat{M}_i, \wihat{T}_i)$. Let $P_i$ be the union of the components of $\boundary M_i$ corresponding to the points $p_i$. Each component of $P_i$ is a twice or thrice-punctured sphere. We may view each $(M_i, T_i)$ as embedded in $(M,T)$ with $P_i \subset M$ the union of separating essential twice and thrice-punctured spheres.

Let $\mc{H} = \bigcup_i \mc{H}_i \cup P_i$. Clearly $\mc{H}$ is a v.p.-bridge surface. We will show that, perhaps after turning some of the $\mc{H}_i$ upside down, we can define a transverse orientation so that $\mc{H}$ is an oriented multiple v.p.-bridge surface for $(M,T)$. 

By the definition of connected sum and trivalent-vertex sum, the graph $G$ in $M$ dual to the summing spheres is a tree. Each vertex of $G$ is  some $(M_i, T_i)$ and we associate the midpoint of each edge of $G$ with some component $P$ of some $P_i$.   Let $(M_1, T_1)$ be the root of $G$ and put a partial order $\leq$ on the vertices of $G$ so that $(M_1, T_1)$ is the least element of the partial order and if a vertex $c$ separates vertices $a$ and $b$ then $a < c < b$. Orient the edges of $G$ so that if vertices $v$ and $w$ are the endpoints of an edge pointing from $v$ to $w$ then $v < w$. 

Let $G_i$ be the graph in $(M_i, T_i)$ dual to $\mc{H}_i$. The transverse orientation on $\mc{H}_i$ induces an orientation on the edges of $G_i$. Suppose that $P \subset P_i$ is a component. For each $i \in \{1, \hdots, n\}$, replace the vertex $(M_i, T_i)$ in $G$ with the graph $G_i$; we obtain the graph $G''$ dual to $\mc{H}$. See Figure \ref{fig: trees}. Since $G$ was a tree, after turning $\mc{H}_i$ upside down, if necessary, $G''$ becomes an oriented graph, inducing transverse orientations on the components of $P_i$ for each $i$ and $\mc{H}$ becomes a multiple v.p.-bridge surface with coherent transverse orientations. Since $G$ is a tree and each $\mc{H}_i$ is an oriented multiple v.p.-bridge surface, there is no closed flow line in $M$. Thus, $\mc{H}$ is an oriented multiple v.p.-bridge surface for $(M,T)$.

We now do the necessary calculations to obtain our bound.

Observe that $x = \netchi(\mc{H}) = \sum_{i=1}^n \netchi(\mc{H}_i) - 2(n-1)$. Furthermore, since $r$ depends only on negative euler characteristic and the number of intersections with $T$,
\[\begin{array}{rcl}
\net r (\mc{H}) &=& \sum_{i=1}^n \net r(\mc{H}_i) - p_2 r_2 - p_3 r_3 \\
&=& \sum_{i=1}^n \net r(\wihat{M}_i,\wihat{T}_i) - p_2r_2 - p_3 r_3.
\end{array}
\] 
By assumption, consolidation, generalized destabilization, unperturbing, and undoing a removable arc to do not increase $\net r$. Thus, we may perform these operations on $\mc{H}$ as necessary to ensure it is reduced. Then, $\net r_x(M,T) \leq  \net r(\mc{H})$, and the result follows.
\end{proof}

\begin{corollary}[Additivity]\label{General additivity}
Assume that $r$ is order-preserving and that $x$ is realizable for some non-trivial $(M,T) \in \M$. Then there is a prime factorization of $(M,T)$ into $(\wihat{M}_1, \wihat{T}_1), \hdots, (\wihat{M}_n, \wihat{T}_n)$ such that there exist integers $x_1, \hdots, x_n$, summing to at most $x - 2(n-1)$, so that $x_i$ is realizable for $(\wihat{M}_i, \wihat{T}_i)$, and
\[
\net r_x(M,T) =  -p_2r_2 - p_3 r_3+ \sum_{i = 1}^n \net r_{x_i}(\wihat{M}_i, \wihat{T}_i).
\]
where $p_2$ is the number of twice punctured spheres and $p_3$ is the number of thrice punctured spheres in the decomposition. 
\end{corollary}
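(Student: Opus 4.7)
The plan is to sandwich $\net r_x(M,T)$ between two inequalities coming directly from Theorem~\ref{thm: Super-additivity} and Theorem~\ref{subadditive}. Since super-additivity produces a specific prime factorization along with specific values $x_i$, and sub-additivity accepts an arbitrary prime factorization with any realizable $x_i$ summing to at most $x-2(n-1)$, we will feed the output of super-additivity into sub-additivity so that both inequalities involve the same data.

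In more detail, I would first invoke Theorem~\ref{thm: Super-additivity}, using the assumptions that $(M,T)\in\M$ is non-trivial, that $x$ is realizable for $(M,T)$, and that $r$ is order-preserving. This yields a prime factorization $(\wihat{M}_1,\wihat{T}_1),\ldots,(\wihat{M}_n,\wihat{T}_n)$ of $(M,T)$, integers $x_1,\ldots,x_n$ with $x_i$ realizable for $(\wihat{M}_i,\wihat{T}_i)$ and $\sum x_i\le x-2(n-1)$, and numbers $p_2,p_3$ of twice- and thrice-punctured summing spheres, such that
\[
\net r_x(M,T)\ \ge\ -p_2 r_2-p_3 r_3+\sum_{i=1}^n \net r_{x_i}(\wihat{M}_i,\wihat{T}_i).
\]

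Next I would apply Theorem~\ref{subadditive} to this same prime factorization, with the same choice of $x_i$. The hypotheses of Theorem~\ref{subadditive} need $\net r$ not to increase under consolidation, generalized destabilization, unperturbing, or undoing a removable arc; this is the one subtlety, so I would argue (or note as a standing assumption on $r$) that it is a consequence of $r$ being order-preserving, since each of these local moves occurs inside an extended thinning move that relates two elements of $\H$ via $\more$. With the hypotheses verified, Theorem~\ref{subadditive} gives
\[
\net r_x(M,T)\ \le\ \sum_{i=1}^n \net r_{x_i}(\wihat{M}_i,\wihat{T}_i)-p_2 r_2-p_3 r_3.
\]
Combining the two inequalities produces the claimed equality, and the integers $p_2,p_3$ and the realizable $x_i$ inherited from the super-additivity step play both roles simultaneously.

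The main obstacle I anticipate is the matching of hypotheses between the two theorems rather than any new geometric content: Theorem~\ref{subadditive} is phrased with a stand-alone assumption about $\net r$ behaving monotonically under the individual local moves, while Corollary~\ref{General additivity} only assumes order-preservation along $\more$. Verifying that order-preservation is enough (either by decomposing each local move as part of some extended thinning move, or by strengthening the hypothesis of the corollary to include the local-move monotonicity) is what the proof really needs to settle; once that is in place, the combination of super- and sub-additivity is formal.
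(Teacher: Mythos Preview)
Your approach is exactly the paper's: the proof there is the single sentence ``The corollary follows immediately from the super-additivity and sub-additivity theorems.'' You have correctly identified the one genuine subtlety that the paper glosses over, namely that Theorem~\ref{subadditive} is stated with the hypothesis that $\net r$ is non-increasing under the individual local moves (consolidation, generalized destabilization, unperturbing, undoing a removable arc), whereas the corollary only assumes order-preservation along $\to$ on $\H$; since $\to$ is defined via extended thinning moves (which begin with an elementary thinning sequence) and $\H$ consists of \emph{reduced} surfaces, order-preservation does not literally imply monotonicity under a bare destabilization or consolidation applied to a non-reduced surface. In the paper this gap is harmless because in the only applications (Theorem~\ref{Main Theorem}) the monotonicity under individual moves is verified directly for $\extent$ and $2\extent^2$ in the proof of Corollary~\ref{thm:Thinning invariance}, so the intended reading is that both hypotheses are in force.
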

\begin{proof}
The corollary follows immediately from the super-additivity and sub-additivity theorems (Theorems \ref{thm: Super-additivity} and \ref{subadditive}.)
\end{proof}

It is now easy to verify that net extent and width are additive. Let $\M$ be the set of irreducible (3-manifold, graph) pairs $(M,T)$ satisfying the running assumption such that $T \neq \nil$ and every $S^2 \subset M$ separates. Let $\M_2 \subset \M$ be the subset with elements $(M,T)$ such that $g(M) \leq 2$. Let $\M_s \subset \M$ be the subset with elements $(M,T)$ where every closed surface in $M$ separates.

\begin{theorem}[Net Extent and Width are Additive]\label{Main Theorem}
Let $(M,T) \in \M$ be non-trivial,  let $g$ be the Heegaard genus of $M$, and let $x$ be any integer with $x \geq 2g - 2$. Then there is a prime factorization of $(M,T)$ into $(\wihat{M}_1, \wihat{T}_1), \hdots, (\wihat{M}_n, \wihat{T}_n)$ so that there exist integers $x_1, \hdots, x_n$, summing to at most $x - 2(n-1)$,  with $x_i$ is realizable for $(\wihat{M}_i, \wihat{T}_i)$ and
\[
\netextent_x(M,T) =  -p_3/2+ \sum_{i = 1}^n \netextent_{x_i}(\wihat{M}_i, \wihat{T}_i).
\]
where $p_3$ is the number of thrice punctured spheres in the decomposition. Furthermore, if $(M,T) \in \M_s$ or if $(M,T) \in \M_2$ and $x \leq 2$, then also
\[
\width_x(M,T) =  -p_3/2+ \sum_{i = 1}^n \width_{x_i}(\wihat{M}_i, \wihat{T}_i).
\]
\end{theorem}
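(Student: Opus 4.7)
My plan is to derive Theorem \ref{Main Theorem} as a direct application of the abstract additivity result, Corollary \ref{General additivity}, instantiated with the two choices of $r$ from Example \ref{r examples 2}: $r(S) = \extent(S)$ (so $\net r = \netextent$) for the first equation, and $r(S) = 2\extent^2(S)$ (so $\net r = \width$) for the second. In both cases a twice-punctured sphere has $\extent = 0$ and a thrice-punctured sphere has $\extent = 1/2$, so $r_2 = 0$ and $r_3 = 1/2$ in both instantiations, and the error term $-p_2 r_2 - p_3 r_3$ of Corollary \ref{General additivity} collapses to $-p_3/2$ exactly as required.

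For the net-extent equation I take the ambient set to be $\M$ itself. Closure under prime summands is routine: irreducibility of the graph and the pair, the running assumption, non-emptiness of the graph in each summand (forced by the requirement $p_i \in \wihat{T}_i$ in the definition of connected and trivalent-vertex sum), and separation of spheres all pass to the factors (a sphere in $\wihat{M}_i$ can be isotoped off the capping $3$-balls to give a sphere in $M$). Order-preservation of $r = \extent$ on $\H$ is the net-extent half of Corollary \ref{thm:Thinning invariance}, and realizability of $x \geq 2g(M) - 2$ for $(M,T) \in \M$ is built into Corollary \ref{Cor: well-defined}. Corollary \ref{General additivity} then yields the first equation.

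For the width equation I take the ambient set to be either $\M_s$, or (when $x \leq 2$) $\M_2$. Both are closed under prime summands: a non-separating closed surface in $\wihat{M}_i$ would lift to a non-separating closed surface in $M$ (capping off summing spheres with $3$-balls preserves separation), so the $\M_s$ condition is inherited; and Heegaard genus is additive under connected sum and preserved under trivalent-vertex sum (the summing spheres are topologically ordinary $2$-spheres), so $g(\wihat{M}_i) \leq g(M) \leq 2$ in the $\M_2$ case. The substantive task is order-preservation of $r = 2\extent^2$, which, by the width half of Corollary \ref{thm:Thinning invariance}, reduces to verifying the width hypothesis (W1)-(W3) for the minimizing $\mc{H}$. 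Conditions (W1) and (W3) are contained in membership in $\M$. Condition (W2) holds via its second disjunct for $\M_s$; and for $\M_2$ with $x \leq 2$, any $\mc{H}$ with $\netchi(\mc{H}) \leq x \leq 2$ satisfies, by Lemma \ref{lem:Component bound}, that every component has $-\chi \leq 2$ and hence genus at most $2$, giving the first disjunct of (W2). Corollary \ref{General additivity} then produces the second equation.

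The main obstacle is precisely this verification of (W2) in the $\M_2$ case, where the bound $x \leq 2$ is what ultimately lets the width estimate in Corollary \ref{thm:Thinning invariance} fire; one must also observe that the bound persists through thinning, which is immediate since $\netchi$ does not increase under thinning. Once the order-preservation of each $r$ is settled, the super-additivity and sub-additivity halves of Corollary \ref{General additivity} (Theorems \ref{thm: Super-additivity} and \ref{subadditive}) combine mechanically to deliver both equations.
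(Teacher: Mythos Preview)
Your proposal is correct and follows essentially the same approach as the paper: both derive the result by instantiating Corollary \ref{General additivity} with $r = \extent$ and $r = 2\extent^2$ (as in Example \ref{r examples 2}), invoking Corollary \ref{thm:Thinning invariance} for order-preservation. Your write-up is considerably more detailed than the paper's very terse proof---in particular, your explicit use of Lemma \ref{lem:Component bound} to verify (W2) in the $\M_2$ case with $x \leq 2$ fills in a step the paper leaves implicit, and your check that the relevant ambient sets are closed under taking factors is likewise something the paper takes for granted.
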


\begin{proof}
As in Examples \ref{r examples 1} and \ref{r examples 2}, $r = \extent$ and $\net r = \netextent$ satisfy the requirement that for $S \in \S_0$, $r(S)$ depends only on the euler characteristic and number of punctures of $S$.  By Corollary \ref{thm:Thinning invariance}, extent is order-preserving. By Corollary \ref{General additivity}, we have the result for net extent. If $(M,T) \in \M_s$ or if $(M,T) \in \M_0$ and $x \leq 2$, then a similar argument shows that $\width_x$ is additive. 
 \end{proof}

\section{Comparison with Gabai thin position}\label{comparison}

The width for knots in $S^3$ defined by Gabai \cite{G3} and our definition of $\width_{-2}$ applied to pairs $(S^3,K)$ are very similar. Both definitions have thick surfaces $\mc{H}^+$ and thin surfaces $\mc{H}^-$  that are spheres and have a height function. Both widths can be calculated via similar formulae. Gabai's width is given \cite[Lemma 6.2]{SS-width} by the formula:
\[
\frac{1}{2}\left(\sum_{H \cpt \mc{H}^+} |H \cap K|^2 - \sum_{F \cpt \mc{H}^-} |F \cap K|^2\right)
\]
and our width is given by
\[\begin{array}{lc}
2\left(\sum_{H \cpt \mc{H}^+} \frac{(|H \cap K| - 2)^2}{4} - \sum_{F \cpt \mc{H}^-} \frac{(|F \cap K| - 2)^2}{4}\right) &= \\
\frac{1}{2}\left(\sum_{H \cpt \mc{H}^+} (|H \cap K| - 2)^2 - \sum_{F \cpt\mc{H}^-} (|F \cap K| - 2)^2\right).
\end{array}
\]
Finally, both definitions of width are related to a definition of thin position. Indeed, we can say that $\mc{H}$ is in Gabai thin position if $\mc{H}$ minimizes Gabai's width for a knot $K$. Similarly, with our definitions there is always a $\mc{H}$ which is both locally thin and minimizes $\width_{-2}$. 

And yet Gabai thin position is not necessarily additive under connected sum \cite{BT} but our width is (Theorem \ref{Main Theorem}). The essential difference between the two definitions of width is that in Gabai thin position all the components of $\mc{H} = \mc{H}^+ \cup \mc{H}^-$ are concentric, while in our definition the components of $\mc{H}$ need not be concentric.

We now briefly examine Blair and Tomova's counterexample to width additivity for Gabai thin position in light of our definition. Figure \ref{fig:Gabaithin} shows a knot $K$ (in fact a family of knots) and the connect sum of $K\#\text{trefoil}$. Note that the projections of $K$ and $K\#\text{trefoil}$ depicted in the figure have the same Gabai width while the trefoil has a Gabai width of 8. The crux of showing that this is indeed a counterexample to additivity of Gabai width is to show that the embedding of $K$ depicted in Figure \ref{fig:Gabaithin} is actually in Gabai thin position. The thin and thick surfaces in the figure are a v.p.-multiple bridge surface $\mc{H}$. As $\width_{-2}$ is additive, it must be the case that $\mc{H}$ is not a minimum width multiple v.p. bridge surface for $K$. Note that $$w(\mc{H})=2(4^2+4^2+4^2-1^2 -1^2)=92$$

\begin{center}
\begin{figure}[tbh] 
\centering
\labellist \small\hair 2pt 
\pinlabel {$K$} [ul] at 280 40
\pinlabel {$K\#\text{trefoil}$} [ul] at 650 40
\endlabellist 
\includegraphics[scale=.45]{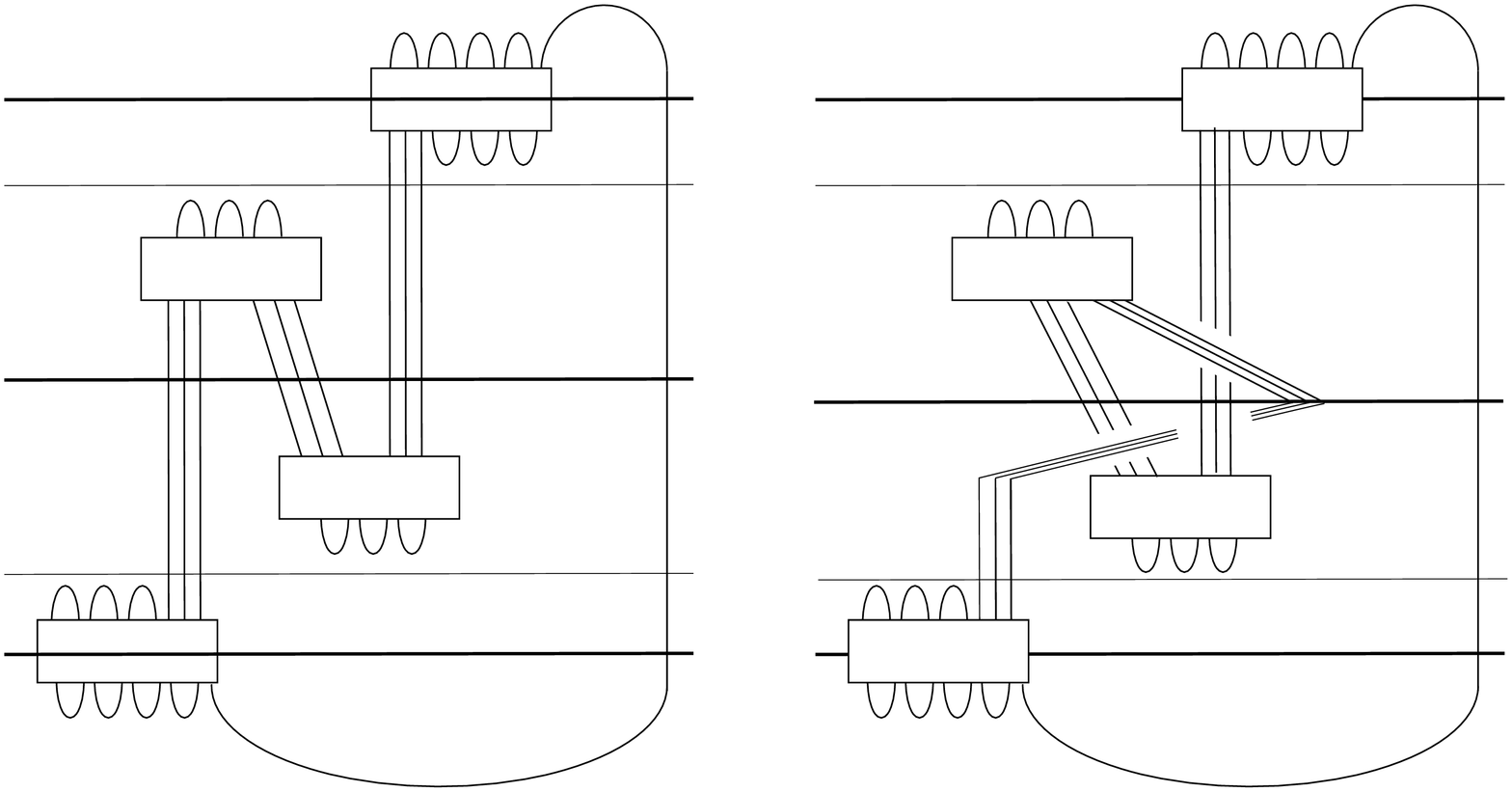}
\caption{The rectangles represent particular braids, which are irrelevant for our purposes. Thick and thin surfaces are represented with thick and thin lines respectively.}
\label{fig:Gabaithin}
\end{figure}
\end{center}

Another projection $K'$ of the knot $K$ is depicted on both the left and right of Figure \ref{fig:thinning}. That $K$ and $K'$ are isotopic was noted by Scharlemann and Thompson in \cite{SchTh-width}. To show that the multiple bridge surface $\mc{H}$ on the left of Figure \ref{fig:thinning} is not locally thin, we point out (again on the left of Figure \ref{fig:thinning}) a weak-reducing pair of discs for each thick surface. Applying two elementary thinning sequences using the indicated discs produces (after an isotopy) the multiple v.p. bridge surface $\mc{H}'$ depicted on the right of Figure \ref{fig:thinning}. Using our formula for width
\[
\width(\mc{H}')=2(2^2+4^2+4^2+2^2-1^2-1^2-1^2)=74.\]
This demonstrates that $\mc{H}$ is indeed not a minimum width multiple v.p. bridge surface for $K$, although it does minimize Gabai width (for particular choices of braids).

\begin{center}
\begin{figure}[tbh] 
\labellist \small\hair 2pt 
\pinlabel {$\mc{H}$} [ul] at 130 40
\pinlabel {$\mc{H}'$} [ul] at 550 40
\endlabellist 
\includegraphics[scale=.5]{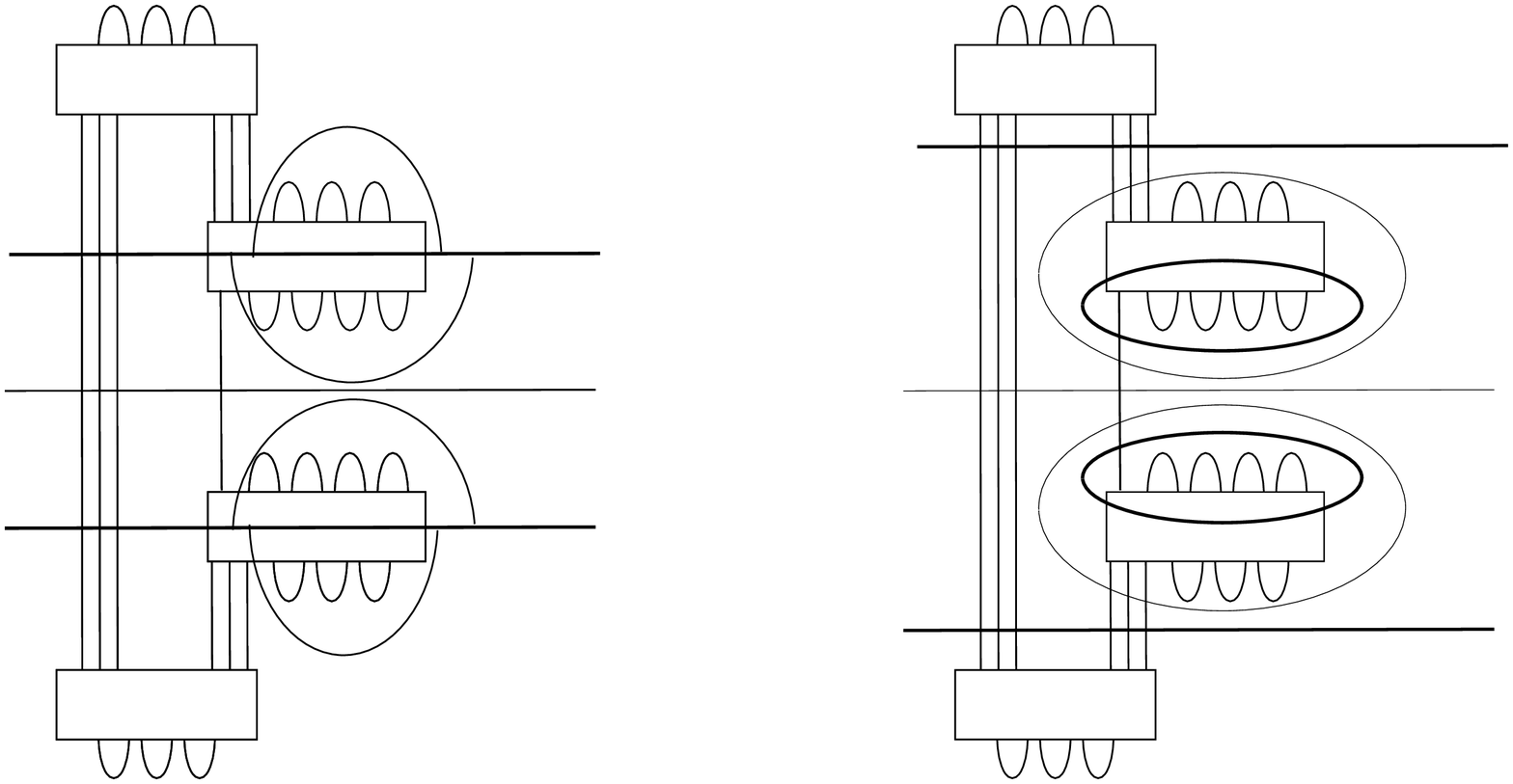}
\caption{Thick and thin surfaces are represented with thick and thin lines respectively.}
\label{fig:thinning}
\end{figure}
\end{center}

\section{On some classical invariants}\label{classical}

As an easy example of how net extent can be used to study classical invariants, we reprove classical theorems of Schubert \cite{Schubert} and Norwood \cite{Norwood}. The Schubert theorem is also a  consequence of the fact that the double branched cover over a 2-bridge knot is a lens space. We include a proof, however, as an example of how to use our techniques.

\begin{theorem}[Schubert]\label{thm:schubert}
If $K \subset S^3$ is a knot which is 2-bridge with respect to a sphere, then $K$ is prime.
\end{theorem}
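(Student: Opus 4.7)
The plan is to prove the contrapositive via the Additivity Theorem together with the unknot detection theorem for net extent.

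First I would observe that a 2-bridge sphere $S$ for $K$ is a v.p.-bridge surface for $(S^3, K)$ with $|S \cap K| = 4$ and $\chi(S) = 2$, so $\extent(S) = 1$ and $\netchi(S) = -2$. After performing whatever generalized destabilizations, consolidations, unperturbations, and removable-arc eliminations are available (none are, if $K$ is truly 2-bridge and $S$ is minimal), we obtain an element of $\H(S^3,K)$, and as noted in the introduction's discussion of the relationship between net extent and bridge number, this yields the upper bound
\[
\netextent_{-2}(S^3, K) \leq b(K) - 1 = 1.
\]

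Next, assume for contradiction that $K$ is composite. Then $(S^3, K) \in \M$ (it is irreducible, $S^3$ is connected, every sphere in $S^3$ separates, and $K \neq \nil$), and $(S^3, K)$ is non-trivial. Since $g(S^3) = 0$, I take $x = -2 \geq 2g(S^3) - 2$. Because $K$ is a knot, any prime decomposition contains only connected sums, so $p_3 = 0$. Applying Theorem \ref{Main Theorem} with this $x$ gives a prime decomposition $(S^3, K) = (S^3, K_1) \# \cdots \# (S^3, K_n)$ with $n \geq 2$ (as $K$ is composite and non-trivial) and integers $x_1, \dots, x_n$ with each $x_i \geq 2g(S^3) - 2 = -2$ summing to at most $-2 - 2(n-1) = -2n$. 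These two inequalities force $x_i = -2$ for every $i$, so the additivity equation becomes
\[
\netextent_{-2}(S^3, K) \;=\; \sum_{i=1}^n \netextent_{-2}(S^3, K_i).
\]

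Finally, I would argue that each summand on the right is at least $1$. Each $K_i$ is a non-trivial knot in $S^3$, and $S^3$ has no non-separating spheres and no lens space or solid torus summand, so Theorem \ref{Net Extent Detects unknot} applies and gives $\netextent_{-2}(S^3, K_i) > 0$. Since $K_i$ is a knot in $S^3$, every closed orientable surface $H \subset (S^3, K_i)$ has $|H \cap K_i|$ even (as $K_i$ is null-homologous mod $2$) and $\chi(H)$ even, so extent is integer-valued, hence net extent is an integer. Therefore $\netextent_{-2}(S^3, K_i) \geq 1$, and the equation yields $\netextent_{-2}(S^3, K) \geq n \geq 2$, contradicting the upper bound $\leq 1$ established in the first step.

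The main subtlety is the bookkeeping in the middle step: verifying that the constraint $\sum x_i \leq x - 2(n-1)$ together with the realizability condition $x_i \geq -2$ pins the $x_i$ down exactly, so that additivity can be invoked at a level low enough to match the upper bound coming from the bridge sphere. Everything else is a routine parity observation together with a citation of Theorem \ref{Net Extent Detects unknot}.
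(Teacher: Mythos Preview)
Your proof is correct and follows essentially the same route as the paper's: bound $\netextent_{-2}(S^3,K)\leq 1$ via the bridge sphere, apply the Additivity Theorem, and use unknot detection plus integrality to force each summand to contribute at least $1$. The paper's version is terser—it writes the additivity equation directly with subscript $-2$ on each summand and simply asserts integrality—whereas you spell out why $\sum x_i \leq -2n$ together with $x_i \geq -2$ pins each $x_i$ to $-2$, and give the parity argument for integrality; but these are the same ideas made explicit.
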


\begin{proof}
Suppose that $K$ is a composite 2-bridge knot. Since $K$ is 2-bridge, \[\extent_{-2}(S^3,K) \leq 1.\] Since extent is always integral for knots in $S^3$, by Theorem \ref{Net Extent Detects unknot}, $\extent_{-2}(S^3,K) = 1$.  Since $K$ is composite, by Theorem \ref{Main Theorem}, it has a prime factorization $K = K_1 \# \hdots \# K_n$ such that
\[
1 = \netextent_{-2}(S^3,K) = \netextent_{-2}(S^3,K_1) + \hdots + \netextent_{-2}(S^3,K_n).
\]
Since each $K_i$ is non-trivial, by Theorem \ref{Net Extent Detects unknot} it follows that: 
\[
1 \geq n.
\]
Thus, $n = 1$ and so $K$ is prime.
\end{proof}

Recall that, after assigning a transverse orientation, a Heegaard surface for the exterior of a knot $K$ in a 3-manifold $M$ is a oriented multiple v.p.-bridge surface for $(M,K)$. Thus, $\netextent_\infty(M,K) \leq t(K)$. 

\begin{theorem}[Norwood]\label{thm:norwood}
If $K \subset S^3$ has $t(K) = 1$, then $K$ is prime.
\end{theorem}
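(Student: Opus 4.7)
The proof mirrors the one for Schubert's theorem (Theorem \ref{thm:schubert}) given immediately above. The key input is the inequality $\netextent_\infty(S^3, K) \leq t(K) = 1$, which the authors have already pointed out in the paragraph preceding the theorem: a genus-$2$ Heegaard surface $\Sigma$ for the exterior of $K$ (which exists precisely because $t(K) = 1$) is an oriented v.p.-bridge surface for $(S^3, K)$ with $\Sigma$ disjoint from $K$, hence $\netchi(\Sigma) = 2$ and $\netextent(\Sigma) = g(\Sigma) - 1 = 1$.

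Suppose for contradiction that $K$ is composite. Then $K$ admits a prime decomposition $K = K_1 \# \cdots \# K_n$ with $n \geq 2$ and each $K_i$ non-trivial. Since every sphere in $S^3$ separates and $K$ is irreducible, $(S^3, K)$ lies in $\M_s$, so the Additivity Theorem (Theorem \ref{Main Theorem}) applies and yields integers $x_1, \hdots, x_n$, each realizable for the corresponding $(S^3, K_i)$, such that
\[
\netextent_\infty(S^3, K) = \sum_{i=1}^n \netextent_{x_i}(S^3, K_i),
\]
the $-p_3/2$ term vanishing because only connected sums occur when factoring a knot (no thrice-punctured summing spheres appear).

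By Theorem \ref{Net Extent Detects unknot}, each non-trivial factor satisfies $\netextent_{x_i}(S^3, K_i) > 0$. Since a knot meets any closed surface in an even number of points and closed orientable surfaces have even Euler characteristic, net extent is integer-valued on pairs $(S^3, K_i)$, so in fact $\netextent_{x_i}(S^3, K_i) \geq 1$ for every $i$. Summing gives $\netextent_\infty(S^3, K) \geq n \geq 2$, contradicting the upper bound of $1$; hence $K$ is prime. The only substantive step is upgrading the strict positivity of Theorem \ref{Net Extent Detects unknot} to the bound $\geq 1$ via integrality, and this is exactly what allows the numerics to close; everything else is a direct transcription of Schubert's argument with $t(K) = 1$ in place of the bridge-number hypothesis.
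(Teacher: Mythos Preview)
Your proof is correct and follows essentially the same route as the paper's: bound $\netextent_\infty(S^3,K)$ above by $t(K)=1$ using a minimal Heegaard surface for the exterior, apply the Additivity Theorem to obtain a prime factorization whose summands have net extents adding to at most $1$, and use Theorem~\ref{Net Extent Detects unknot} together with integrality to force $n=1$. The only cosmetic differences are that you phrase it as a contradiction (assume $n\ge 2$) rather than directly concluding $n\le 1$, and you spell out the integrality step that the paper leaves implicit; you also invoke $\M_s$ where membership in $\M$ would already suffice for the net-extent statement.
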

\begin{proof}
Let $K$ be a knot with $t(K) = 1$. Since the unknot has tunnel number 0, $K$ is not the unknot. By Theorem \ref{Main Theorem}, $K$ has a prime factorization
\[K = K_1 \# \hdots \# K_n\]
such that 
\[
1 \geq \netextent_{x}(K) = \netextent_{x}(K_1) + \hdots + \netextent_{x}(K_n).
\]
Since each $K_i$ is non-trivial, by Theorem \ref{Net Extent Detects unknot} it follows that: 
\[
1 \geq n.
\]
Thus, $n = 1$ and so $K$ is prime.
\end{proof}

Scharlemann and Schultens \cite{ScharlemannSchultens-Tunnel} generalized Norwood's theorem to show that if a knot $K \subset S^3$ has at least $n$ prime factors, then $K$ has tunnel number at least $n$. (Another proof has been given by Weidmann \cite{Weidmann}.) Morimoto \cite{Morimoto} showed that the tunnel number of meridionally small knots does not go down under connected sum. Here is a common generalization of both the Scharlemann-Schultens and Morimoto result.

\begin{theorem}\label{MSS generalization}
For each $i \in \{1, \hdots, n\}$ let $K_i$ be a knot in a closed, orientable 3-manifold $M_i$ such that every sphere in $M_i$ separates and each $(M_i, K_i)$ is prime and irreducible. Assume that there is an integer $j \leq n$ is such that $(M_i, K_i)$ is meridionally small if and only if $i \leq j$. Then, letting $(M,K) = (M_1, K_1) \# \cdots \# (M_n, K_n)$, we have:
\[
(n - j) + t(K_1) + \hdots + t(K_j) \leq t(K) \leq (n-1) + \sum t(K_i)
\]
\end{theorem}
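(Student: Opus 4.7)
The plan has two parts: the upper bound $t(K) \leq (n-1) + \sum t(K_i)$ is essentially classical, while the lower bound is where the machinery of net extent does the work.

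For the upper bound, I would begin with unknotting tunnel systems $\tau_i$ for each $K_i$ realizing $t(K_i)$, and for each of the $n-1$ summing spheres $P_k$ append a small arc $\alpha_k$ connecting the two strands of $K$ meeting $P_k$. The union $\tau=\bigcup\tau_i\cup\bigcup\alpha_k$ is a tunnel system for $K$: the exterior of $K\cup\tau$ is obtained by gluing the handlebodies coming from each $(M_i,K_i\cup\tau_i)$ along disks dual to the arcs $\alpha_k$, and a disk union of handlebodies is again a handlebody. This gives $t(K)\leq\sum t(K_i)+(n-1)$.

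For the lower bound, I start from the bound $\netextent_\infty(M,K)\leq t(K)$ recorded above Theorem \ref{thm:norwood} (a minimum-genus Heegaard surface for $E(K)$, suitably oriented, is a v.p.-bridge surface for $(M,K)$ disjoint from $K$ with extent $t(K)$). Each $(M_i,K_i)$ lies in $\M$ and is prime and non-trivial, so by Miyazaki's uniqueness of prime factorization together with Theorem \ref{Main Theorem}, applied with $x=\infty$ and $p_3=0$ (all summing spheres are twice-punctured since $K$ is a knot), one gets
\[
\netextent_\infty(M,K)=\sum_{i=1}^n \netextent_\infty(M_i,K_i).
\]
It therefore suffices to prove the two prime-factor estimates: \textbf{(a)} if $(M_i,K_i)$ is m-small then $\netextent_\infty(M_i,K_i)=t(K_i)$, and \textbf{(b)} if $(M_i,K_i)$ is any non-trivial prime then $\netextent_\infty(M_i,K_i)\geq 1$. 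Claim (b) is immediate from Theorem \ref{Net Extent Detects unknot}, since net extent is integer-valued for knots.

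For claim (a), choose a locally thin $\mc{K}\in\H(M_i,K_i)$ that realizes $\netextent_\infty(M_i,K_i)$ (such a $\mc{K}$ exists by Theorem \ref{partial order} combined with Corollary \ref{thm:Thinning invariance}). By Theorem \ref{Properties Locally Thin} every component of $\mc{K}^-$ is c-essential in $(M_i,K_i)$, and so m-smallness of $K_i$ forces $\mc{K}^-\cap K_i=\nil$. Removing $N(K_i)$ then turns $\mc{K}$ into a Scharlemann--Thompson generalized Heegaard splitting of the exterior $E(K_i)$; amalgamating it (Schultens \cite{Schultens}) produces a Heegaard surface $H$ of $E(K_i)$ with $-\chi(H)$ equal to the net Euler characteristic of the punctured splitting, which a short calculation shows equals $2\,\netextent(\mc{K})$. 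Consequently
\[
t(K_i)=g(E(K_i))-1\leq g(H)-1=\netextent(\mc{K})=\netextent_\infty(M_i,K_i),
\]
and the reverse inequality is automatic, so equality holds. Combining (a), (b) and the displayed additivity gives
\[
t(K)\geq \netextent_\infty(M,K)=\sum_{i=1}^n\netextent_\infty(M_i,K_i)\geq \sum_{i=1}^j t(K_i)+(n-j),
\]
which is the desired lower bound.

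The main obstacle is the bookkeeping in the amalgamation step: one must carefully track how the punctures of $\mc{K}^+$ by $K_i$ contribute to the Euler characteristic after drilling, and verify that the amalgamated surface is a genuine Heegaard surface of $E(K_i)$ rather than of some capped-off manifold. The hypothesis that thin surfaces miss $K_i$ (forced by m-smallness and c-essentiality) is what makes this bookkeeping come out correctly; without it, thin surfaces could contribute extra punctures and destroy the identification $-\chi(H)=2\,\netextent(\mc{K})$, which is precisely the phenomenon that explains the deficit $(n-j)$ for the non-m-small factors.
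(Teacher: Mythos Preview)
Your approach is the paper's: bound $t(K)$ below by $\netextent_\infty(M,K)$, invoke additivity and uniqueness of prime factorization, use Theorem~\ref{Net Extent Detects unknot} for the non--m-small factors, and for the m-small factors use c-essentiality of thin levels to force $\mc{K}^-\cap K_i=\nil$ and then amalgamate to a Heegaard surface of $E(K_i)$.

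The one point where you are vaguer than the paper is the conversion in claim (a). Literally removing $N(K_i)$ does \emph{not} yield a Scharlemann--Thompson generalized splitting of $E(K_i)$ when the unique thick surface $H$ meeting $K_i$ has $|H\cap K_i|>0$: the drilled surface $H\setminus N(K_i)$ has meridional boundary on $\partial E(K_i)$, so Schultens amalgamation does not apply directly. The paper handles this by first performing $|H\cap K_i|/2$ meridional stabilizations on $H$ (tubing along the bridge arcs on one side), which produces a closed surface $H'$ disjoint from $K_i$ with $\extent(H')=\extent(H)$; the resulting multiple v.p.-bridge surface is now entirely disjoint from $K_i$, hence a genuine generalized Heegaard splitting of $E(K_i)$, and amalgamates to a Heegaard surface $J_i$ with $-\chi(J_i)/2=\netextent(\mc{K})$. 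Your closing paragraph correctly identifies this as the step needing care; the meridional-stabilization move is the missing sentence that makes the bookkeeping go through.
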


\begin{proof}
Recall that Heegaard genus is additive under connected sum of 3--manifolds. Consequently, we may assume that the exterior of $K_i$ in $M_i$ (for each $i$) is irreducible. Since every sphere in every $M_i$ separates, each pair $(M_i, K_i)$ is irreducible. Furthermore, by \cite[Theorem 4.1]{Miyazaki}, the prime factorization
\[
(M_1, K_1) \# \cdots \# (M_n, K_n)
\]
is unique up to re-ordering.

Let $t$ be the tunnel number of $K$ and let $H$ be a minimal genus Heegaard surface for the exterior of $K$. Let $x = \infty$. The surface $H$ is also a v.p.-bridge surface for $(S^3, K)$ and so
\begin{equation}\label{net extent and tunnel number}
\netextent_{x}(M,K) \leq \extent(H) = t(K).
\end{equation}

By Theorem \ref{Main Theorem} and our assumption on the uniqueness of prime factorization, there exist $(M_1, K_1)$, $\hdots$, $(M_n, K_n)$ such that there are integers $x_1, \hdots, x_n$ so that all of the following hold:
\begin{enumerate}
\item $x_i$ is realizable for $(M_i, K_i)$ for all $i$.
\item $x_1 + \hdots + x_n \leq x - 2(n-1)$
\item 
\begin{equation*}\label{additivity application}
\netextent_{-2}(M, K) = \sum_{i=1}^n \netextent_{x_i} (M_i, K_i).
\end{equation*}
\end{enumerate}
Suppose $(M_i, K_i)$ is meridionally small and let $\mc{H}_i$ be a multiple v.p.-bridge surface for $(M_i, K_i)$ such that $\netchi(\mc{H}_i) \leq x_i$ and $\netextent(\mc{H}_i) = \netextent_{x_i}(M_i, K_i)$. If there were a component of $\mc{H}^-$ which intersected $K_i$, by Theorem \ref{Properties Locally Thin}, we would contradict local thinness. Thus, $K \cap \mc{H}^-_i = \nil$ and there is at most one component $H \subset \mc{H}_i^+$ which intersects $H$. By performing $|H \cap K_i|/2$ meridional stabilizations on $H$, we may create a surface $H'$ such that $\mc{H}_i' = (\mc{H}_i \setminus H) \cup H'$ is a multiple v.p.-bridge surface for $(M_i, K_i)$ disjoint from $K_i$. Observe that $\netextent(\mc{H}_i') = \netextent(\mc{H}_i)$. Since $\mc{H}'_i$ is disjoint from $K_i$ we may amalgamate \cite{Schultens-amalgamate} $\mc{H}'_i$ to a Heegaard surface $J_i$ for the exterior of $K_i$. It is easy to verify that 
\begin{equation}\label{string}
t(K_i) \leq -\chi(J_i)/2 = \extent(J_i) = \netextent(\mc{H}'_i) = \netextent_{x_i}(M_i, K_i).
\end{equation}
Observe that if $(M_i, K_i)$ is (lens space, core loop), then $t(K_i) = 0 = \netextent_{x_i}(M_i, K_i)$. 

By Theorem \ref{Net Extent Detects unknot}, we have $\netextent_{x_i}(M_i, K_i) > 0$ whenever $(M_i, K_i)$ is not $(S^3, \text{ unknot})$ or (lens space, core loop).  Combining Equation \eqref{net extent and tunnel number}, Equation \eqref{additivity application}, and inequality we obtain:
\[
t(K) \geq t(K_1) + \cdots + t(K_j) + m.
\]

Finally, a standard construction shows that $t(K) \leq t(K_1) + \hdots + t(K_n) + (n-1)$, completing the proof.
\end{proof}

For our final application, we show that higher genus bridge number, together with the genus, is super-additive under connected sum of knots that are small and m-small. A more detailed analysis would likely produce an even stronger result.

\begin{theorem}\label{Superadditivity for higher genus bridge number}
Suppose that $(\wihat{M}_i, \wihat{K}_i)$ are small and m-small for $i \in \{1, \hdots, n\}$. Let $(M,K) = \#_{i =1}^n(\wihat{M}_i, \wihat{K}_i)$ and let $g \geq g(M)$. Then there exists $g_i$ such that $\sum g_i \leq g$, $ g(\wihat{M}_i)\leq g_i$ and 
\[
\sum\limits_{i=1} (g_i + b_{g_i}(K) - 1) \leq g + b_g(K) - 1.
\]
\end{theorem}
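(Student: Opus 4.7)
The plan is to apply the Additivity Theorem (Theorem \ref{Main Theorem}) for net extent, then use the smallness and m-smallness hypotheses to force every locally thin multiple v.p.-bridge surface of a summand to have empty thin part, thereby converting the net extent statement into a genus-and-bridge-number statement.

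First I would set $x = 2g - 2$ and produce an upper bound on $\netextent_x(M,K)$. Take a genus-$g$ bridge surface $H$ for $K$ realizing $b_g(K)$, with transverse orientation making it an oriented v.p.-bridge surface. Then $\netchi(H) = 2g - 2 = x$ and $\extent(H) = g - 1 + b_g(K)$, so $\netextent_x(M,K) \leq g - 1 + b_g(K)$. Next, since small-and-m-small forces every sphere in each $\wihat{M}_i$ to separate (by the remark preceding Theorem \ref{Adding bridge number}), each $(\wihat{M}_i, \wihat{K}_i) \in \M$, and by uniqueness of prime factorization (Miyazaki) the prime decomposition provided by Theorem \ref{Main Theorem} is precisely the given one. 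Since $K$ is a knot, no trivalent vertex sums occur, so $p_3 = 0$. Thus there exist integers $x_1, \ldots, x_n$ with $x_i$ realizable for $(\wihat{M}_i, \wihat{K}_i)$, summing to at most $x - 2(n-1) = 2g - 2n$, such that
\[
\netextent_x(M,K) = \sum_{i=1}^n \netextent_{x_i}(\wihat{M}_i, \wihat{K}_i).
\]

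The main obstacle, and the key step, is the following: for each $i$, choose a locally thin $\mc{K}_i \in \H(\wihat{M}_i, \wihat{K}_i)$ with $\netchi(\mc{K}_i) \leq x_i$ and $\netextent(\mc{K}_i) = \netextent_{x_i}(\wihat{M}_i, \wihat{K}_i)$ (using Theorem \ref{partial order} and Corollary \ref{thm:Thinning invariance}); I claim $\mc{K}_i^- = \nil$. By Theorem \ref{Properties Locally Thin}, every component of $\mc{K}_i^-$ is c-essential, hence in particular essential, in $(\wihat{M}_i, \wihat{K}_i)$. Since $\wihat{K}_i$ is m-small, no such component meets $\wihat{K}_i$; since $\wihat{K}_i$ is small and $\wihat{M}_i$ is closed, no such component is a closed essential surface in $\wihat{M}_i \setminus \wihat{K}_i$ either. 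Hence $\mc{K}_i^- = \nil$. Because every v.p.-compressionbody complementary to $\mc{K}_i^+$ then has empty negative boundary and $\wihat{M}_i$ is connected, $\mc{K}_i^+$ must be a single v.p.-bridge surface, i.e.\ a genus $g_i$ Heegaard--bridge surface for $(\wihat{M}_i, \wihat{K}_i)$ intersecting $\wihat{K}_i$ in $2b_i$ points, with $g_i \geq g(\wihat{M}_i)$ and $b_i \geq b_{g_i}(\wihat{K}_i)$.

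Finally I would assemble the inequality. The relation $\netchi(\mc{K}_i) = 2g_i - 2 \leq x_i$ summed over $i$ gives $\sum (2g_i - 2) \leq 2g - 2n$, hence $\sum g_i \leq g$. Combining with the computation $\netextent_{x_i}(\wihat{M}_i, \wihat{K}_i) = g_i - 1 + b_i \geq g_i - 1 + b_{g_i}(\wihat{K}_i)$ from the previous step, and with the upper bound from the first step and additivity, yields
\[
g - 1 + b_g(K) \geq \netextent_x(M,K) = \sum_{i=1}^n \netextent_{x_i}(\wihat{M}_i, \wihat{K}_i) \geq \sum_{i=1}^n \bigl(g_i - 1 + b_{g_i}(\wihat{K}_i)\bigr),
\]
which is the desired inequality. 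The trivial summand case (any $(\wihat{M}_i, \wihat{K}_i) = (S^3, \text{unknot})$) is handled by noting such summands do not change $(M,K)$, so we may assume $(M,K)$ is non-trivial before invoking Theorem \ref{Main Theorem}.
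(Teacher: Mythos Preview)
Your proof is correct and uses the same essential ingredients as the paper (locally thin surfaces together with the small/m-small hypothesis to kill all thin levels), but the organization differs. The paper works directly in $(M,K)$: it takes the genus-$g$ bridge surface, thins it to a locally thin $\mc{H}\in\H(M,K)$, invokes Proposition~\ref{Thin summing spheres} to find the summing spheres among $\mc{H}^-$, and then uses small/m-small to show that $\mc{H}^-$ contains nothing else inside each summand, so each $(\wihat{M}_i,\wihat{K}_i)$ inherits a single thick surface $H_i$ from $\mc{H}$. You instead route through the already-proven Additivity Theorem~\ref{Main Theorem} to obtain the numbers $x_i$, and only afterwards pass to locally thin surfaces $\mc{K}_i$ separately in each summand, where the small/m-small argument forces $\mc{K}_i^-=\nil$. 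Your route is a bit cleaner once Theorem~\ref{Main Theorem} is in hand (and makes the bound $\sum g_i\le g$ drop out immediately from $\sum x_i\le x-2(n-1)$), while the paper's route is more self-contained and avoids invoking Miyazaki's uniqueness of prime factorization explicitly (the summing spheres are produced inside the given $\mc{H}$ rather than matched to a pre-existing decomposition).
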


In the following we again implicitly use the uniqueness of prime factorization.

\begin{proof}
Let $S$ be a genus $g$ bridge surface for $(M,K)$ realizing $b_g(K)$. We may perform a sequence of generalized destabilizations, undoing of removable edges, and un-perturbations to arrive at a reduced v.p.-bridge surface $H$ for $(M,K)$ with $g(H) \leq g$. Let $\mc{H} \in \H(M,K)$ be a locally thin multiple v.p.-bridge surface for $(M,K)$ such that $H \more \mc{H}$. Recall that $\netchi(\mc{H}) \leq \netchi(H)$. Let $\mc{Q}$ be the union of twice and thrice-punctured spheres in $\mc{H}^-$. By Theorem \ref{Thin summing spheres} some subset of $\mc{Q}$ is the union of summing spheres giving a prime decomposition of $(M,K)$. Let $(M', T') = (M,T)\setminus \mc{Q}$ and let $(\wihat{M}, \wihat{T})$ be the result of capping off the components of $\boundary M$ corresponding to $\mc{Q}$. Then $(\wihat{M}, \wihat{T})$ is the union of summands $(\wihat{M}_i, \wihat{K}_i)$ for $i \in \{1, \hdots, n\}$ and the union of $(S^3, \text{unknot})$ pairs. 

Suppose that $F \cpt \mc{H}^-\setminus \mc{Q}$ is contained in the interior of some $(M_i, K_i)$. By Theorem \ref{Properties Locally Thin}, $F$ is essential in $(M,K)$. If $F$ is not essential in $(\wihat{M}_i, \wihat{K}_i)$ then it must be $\boundary$-parallel in $\wihat{M}_i\setminus \eta(\wihat{K}_i)$. However, since $\wihat{K}_i$ is a knot, this implies that $F$ is a sphere intersecting $K$ twice. By the definition of $\mc{Q}$, this implies $F \cpt \mc{Q}$, a contradiction since $F$ is in the interior of $M_i$. Thus, $F$ is essential in $(\wihat{M}_i,\wihat{K}_i)$. Since $(\wihat{M}_i, \wihat{K}_i)$ is small and m-small, the surface $F$ cannot exist, and so $\mc{H}^-$ is disjoint from the interior of each $(M_i, K_i)$. 

We conclude, therefore, that each $(\wihat{M}_i, \wihat{K}_i)$ contains exactly one component $H_i$ of $\mc{H}^+$, and $H_i$ is a v.p.-bridge surface for $\wihat{M}_i$.  Let $g_i = g(H_i)$. Observe that $g_1 + \cdots + g_n \leq g$  Since $\netextent_\infty$ is non-negative for each component of $(\wihat{M}, \wihat{K})$, we have:

\[
\sum\limits_{i=1}^n g_i + |H_i \cap K_i|/2 - 1 = \sum\limits_{i=1}^n \extent(H_i) \leq \netextent(\mc{H}) \leq \extent(S) = g + b_g(K) - 1.
\]

Thus,
\[
\sum\limits_{i=1} (g_i + b_{g_i}(K) - 1) \leq g + b_g(K) - 1.
\]

as desired.
\end{proof}

Finally, we give a new proof of other theorems of Morimoto concerning composite knots which are either $(0,3)$-knots or $(1,2)$ knots. Recall that a knot $K$ in a closed 3-manifold $M$ has a $(g,b)$-decomposition if there is a genus $g$ bridge surface for $K$ intersecting $K$ in $2b$-points. The knot $K$ is a \textbf{$(g,b)$-knot} if it has a $(g,b)$-decomposition and does not have a $(g-1,b+1)$ or $(g, b-1)$ decomposition. In particular, a $(g, 0)$ knot $K$ is a core loop for a handlebody on one side of a genus $g$ Heegaard splitting of $M$. If $g = 0$, we simply say that a $(g,b)$-knot $K$ is $b$-bridge. We start with a result which may be useful in other contexts.

\begin{theorem}\label{netext 1 class}
Suppose that $K$ is a knot in a closed 3-manifold $M$ such that $(M,K)$ is irreducible and every closed surface in $M$ separates. Also assume that $(M,K)$ does not have a (lens space, core loop) connect summand. If $\netextent_{0}(M,K) = 1$, then $M$ is a $S^3$ or a lens space and $K$ has a (1,1)-decomposition. Furthermore, if $\netextent_{-2}(M,K) = 1$, then $K$ is a 2-bridge knot.
\end{theorem}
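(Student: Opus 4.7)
The plan is to use the Additivity Theorem to reduce to the prime case, then amalgamate a locally thin multiple v.p.-bridge surface down to a single bridge surface whose small extent and bounded genus force the conclusion.

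First I would apply Theorem \ref{Main Theorem} at $x = 0$ to $(M,K)$. Since $K$ is a knot, every summing sphere in a prime decomposition is twice-punctured (so $p_3 = 0$), giving a prime decomposition $(\wihat{M}_1, \wihat{K}_1), \ldots, (\wihat{M}_n, \wihat{K}_n)$ with realizable $x_i$ such that $\sum_i \netextent_{x_i}(\wihat{M}_i, \wihat{K}_i) = 1$. Since every closed surface in $M$ separates, every closed surface meets $K$ in an even number of points, so net extent is integer-valued for knots. Combined with Theorem \ref{Net Extent Detects unknot} and our hypothesis that $(M,K)$ has no (lens space, core loop) summand, each non-trivial summand contributes $\netextent_{x_i} \geq 1$. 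Hence $n = 1$ and $(M,K)$ is prime.

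Next I would choose a locally thin $\mc{H} \in \H(M,K)$ with $\netchi(\mc{H}) \leq 0$ and $\netextent(\mc{H}) = 1$, using Theorem \ref{partial order} and Corollary \ref{thm:Thinning invariance}. I would then amalgamate $\mc{H}$ along all of its thin surfaces, following the Schultens-style construction used in the proof of Lemma \ref{lem:Component bound}, adapted to carry the arcs of $K$ through the tubes. This should produce a single connected v.p.-bridge surface $J$ for $(M,K)$ satisfying
\[
-\chi(J) = \netchi(\mc{H}) \leq 0 \quad \text{and} \quad \extent(J) = \netextent(\mc{H}) = 1,
\]
so $g(J) \leq 1$. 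Because $M$ is closed, the v.p.-compressionbodies on either side of $J$ have empty negative boundary, making $J$ an honest Heegaard surface for $M$ that meets $K$ only in bridge arcs.

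From $\extent(J) = g(J) - 1 + |J \cap K|/2 = 1$ and $g(J) \leq 1$, the only possibilities are $(g(J), |J \cap K|/2) \in \{(0,2), (1,1)\}$. If $(g(J), |J \cap K|/2) = (1,1)$, then $J$ is itself a $(1,1)$-decomposition of $K$, and since $M$ admits a genus-one Heegaard splitting, $M \in \{S^3, \text{lens space}, S^1 \times S^2\}$; the separating-surface hypothesis excludes $S^1 \times S^2$. If $(g(J), |J \cap K|/2) = (0,2)$, then $M = S^3$ and $K$ is 2-bridge, and a single stabilization of $J$ yields a $(1,1)$-bridge torus for $K$. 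For the stronger claim that $\netextent_{-2}(M,K) = 1$ implies $K$ is 2-bridge, I would repeat the argument with $\netchi(\mc{H}) \leq -2$, which forces $-\chi(J) \leq -2$ and hence $g(J) = 0$.

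The main technical obstacle is the amalgamation step, specifically verifying that amalgamating an oriented, locally thin multiple v.p.-bridge surface produces a connected single v.p.-bridge surface satisfying $-\chi(J) = \netchi(\mc{H})$ and $\extent(J) = \netextent(\mc{H})$. This parallels Schultens' classical amalgamation for generalized Heegaard splittings, but must track the bridge, vertical, and ghost arcs of $K$ inside the v.p.-compressionbodies as tubes are built along the thin surfaces; the essential book-keeping is the inductive identity $|J \cap K| = |\mc{H}^+ \cap K| - |\mc{H}^- \cap K|$, whose careful verification I expect to be the only nonroutine part of the argument.
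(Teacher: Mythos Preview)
Your strategy is genuinely different from the paper's. The paper never amalgamates $\mc{H}$ back to a single bridge surface; instead it analyzes the v.p.-compressionbody pieces of a locally thin $\mc{H}$ directly, using the identity $2 = 2\,\netextent(\mc{H}) = \sum_{(C,T_C)} \delta(C,T_C)$ together with the bound $\netchi(\mc{H})\le 0$ (so each component is a sphere or torus) to run a case analysis on which compressionbodies (3-balls, solid tori, products) can appear in each ``chunk'' of $(M,K)\setminus\mc{H}^-$. This is longer but entirely self-contained within the machinery already developed.

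The gap in your argument is precisely where you flag it, and it is more serious than ``nonroutine book-keeping.'' Amalgamation of a multiple v.p.-bridge surface \emph{with the knot present} is never established in the paper: the two places amalgamation is used are after forgetting $T$ entirely (proof of Lemma~\ref{lem:Component bound}) or after first making $\mc{H}$ disjoint from $K$ via meridional stabilizations (proof of Theorem~\ref{MSS generalization}). With $K$ present you need two things that are not automatic from Schultens' construction: (i) that the amalgamated $J$ is again a v.p.-bridge surface for $(M,K)$, i.e.\ that $K$ meets each side of $J$ only in bridge arcs or core loops, and (ii) the intersection identity $|J\cap K| = |\mc{H}^+\cap K| - |\mc{H}^-\cap K|$. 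Ghost arcs are the obstruction: they run between components of $\boundary_- C$ and miss $\boundary_+ C$ entirely, so they do not fit the Schultens picture of pushing $1$-handle feet through a product collar, and it is not clear without further argument that they reassemble into bridge arcs on the correct side of $J$ or that the count in (ii) survives. Note also that amalgamation is not simply the inverse of the untelescoping moves of Lemma~\ref{lem:Thinning invariance}, so the $\netextent$-invariance proved there does not immediately give you $\extent(J)=\netextent(\mc{H})$.

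If you can prove a general amalgamation lemma for oriented multiple v.p.-bridge surfaces yielding a connected v.p.-bridge surface with the same $\netchi$ and $\netextent$, your argument would indeed be shorter and more conceptual than the paper's case analysis; as written, that lemma is the missing ingredient.
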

\begin{proof}
Assume, first, that $\netextent_{0}(M,K) = 1$. (If $\netextent_{-2}(M,K) = 0$, this will automatically be the case since $K$ is non-trivial and $\netextent_x$ is decreasing in $x$.) By Theorems \ref{Main Theorem} and \ref{Net Extent Detects unknot}, $(M,K)$ is prime. Let $\mc{H} \in\H(M,K)$ be locally thin with  $\netextent(\mc{H}) = \netextent_0(M,K) = 1$ and $\netchi(\mc{H}) \leq 0$. By Lemma \ref{lem:Component bound}, every component of $\mc{H}$ is a sphere or a torus. 

Recall that for any $(C, T_C) \cpt (M,T)\setminus \mc{H}$ we have
\[
\delta(C, T_C) = \extent(\boundary_+ C) - \extent(\boundary_- C)
\]
By Lemma \ref{Lem: extent difference} and Theorem \ref{Properties Locally Thin}, each $\delta(C, T_C) \geq 0$. A component of $M \setminus \mc{H}^-$ is called a \defn{chunk}. Each chunk $W$ of $M$ contains exactly two v.p.-compressionbodies of $(M,T)\setminus \mc{H}$. Let $\delta(W) = \delta(C, T_C) + \delta(D, T_D)$, where $(C, T_C)$ and $(D, T_D)$ are the v.p.-compressionbodies contained in $W$.

By equation \eqref{net extent eqn}, we have
\[
2 = 2\netextent(\mc{H}) = \sum\limits_{(C, T_C)} \delta(C, T_C) = \sum\limits_W \delta(W). \hspace{.3in} (*)
\]
The first sum is over all v.p-compressionbodies $(C, T_C) \cpt (M,T)\setminus \mc{H}$ and last sum is over all chunks $W$ of $M$. Thus, for every chunk $W$, we have $\delta(W)\leq 2$.

\textbf{Claim:} Either $M = S^3$ and $K$ is a 2-bridge knot or there is at most one v.p.-compressionbody $(C, T_C) \cpt (M,T) \setminus \mc{H}$ such that $C$ is a 3--ball. In that case, if $W$ is the chunk containing $C$, $\delta(W) = 2$ and the compressionbody adjacent to $C$ is not a product compressionbody (even when we ignore $K$).

Suppose that $(C, T_C) \cpt (M,T) \setminus \mc{H}$ is a v.p.-compressionbody with $C$ a 3--ball. Recall that by Theorem \ref{Properties Locally Thin}, $T_C \neq \nil$. Let $(D, T_D) \neq (C, T_C)$ be the v.p.-compressionbody on the other side of $\boundary_+ C$. Let $W = D \cup C$. We will show that  $\delta(W) = 2$. This implies that there is at most one such $W$. In which case, either $D$ is a 3-ball and $K$ is 2--bridge, or $C$ is the only 3-ball.

Let $b_C = |T_C|$ and observe that each component of $T_C$ is a bridge arc. Let $b_D$, $g_D$, and $v_D$ be the number of bridge arcs, ghost arcs, and vertical arcs respectively in $T_D$. Since every sphere in $M$ separates and since $K$ is a knot, $v_D$ must be even. Since $\boundary_+ D$ is a sphere, each component of $\boundary_- D$ is also a sphere. Let $\Gamma$ be the ghost arc graph for $(D, T_D)$. Since $\boundary_+ D$ is a sphere, $\Gamma$ is the union of a forest with isolated vertices. Each component of $\boundary_- D$ must intersect $K$ at least 4 times since $(M,K)$ is prime and every sphere separates $M$. Hence, each isolated vertex of $\Gamma$ intersects at least 4 vertical arcs and each leaf of $\Gamma$ intersects at least 3 vertical arcs. Let $i$ be the number of isolated vertices and $\ell$ be the number of leaves of $\Gamma$. We have, therefore, $v_D \geq 4i + 3\ell$. Observe, also that $|\boundary_- D| - g_D = \chi(\Gamma) = |\Gamma|$. 

Since the $T_D \cap \boundary_+ D = T_C \cap \boundary_+ C$, we have $b_C = b_D + v_D/2$. Thus,
\[
2 \geq \delta(W) = (-1 + b_C) + (-1 + b_D - g_D + p) \geq -2 + |\Gamma| + 2b_D + (4i + 3\ell)/2.
\]

If $|\Gamma| = 0$, then $(D, T_D)$ must be a 3--ball, and $b_D \leq 2$. Since $K$ is not the unknot, we conclude that $M = S^3$ and $K$ is a 2-bridge knot. If $|\Gamma| \geq 1$, then we have
\[
2 \geq \delta(W) \geq -1+ 2b_D + (4i + 3\ell)/2 
\]
Recall that either $i \geq 1$ or $\ell \geq 2$. Thus, $b_D = 0$. If $\ell < 2$, then $\Gamma$ is a single isolated vertex and $(D, T_D)$ is a trivial product compressionbody, a contradiction. Thus, $\ell \geq 2$ and so $2 \geq \delta(W) \geq 2$, implying $\delta(W) = 2$, as desired. This concludes the proof of the Claim.

Henceforth, assume that $(M,K)$ is not $(S^3, \text{ 2-bridge knot})$. Let $z \in \{0,1\}$ be the number of 3-balls in $M \setminus \mc{H}$. If $(A, T_A) \cpt (M,T) \setminus \mc{H}$ has the property that $A$ is a product compressionbody or a solid torus, then $\delta(A, T_A)$ is equal to the number of bridge arcs in $T_A$ (since there cannot be ghost arcs). If $A$ is a trivial product, then $\delta(A, T_A) \geq 1$ since  $(A, T_A)$ cannot be a trivial product compressionbody ($\mc{H}$ being locally thin). Thus, if $z = 1$ there are no product compressionbodies in $M \setminus \mc{H}$. 

Recall that $\netchi(\mc{H}) = -\chi(\mc{H}^+) + \chi(\mc{H}^-) \leq 0$. Every component of $\mc{H}$ is adjacent to precisely two compressionbodies, and so
\[
0 = 2\cdot 0 \geq 2\netchi(\mc{H}) = \sum\limits_{C \cpt M\setminus \mc{H}} (-\chi(\boundary_+ C) + \chi(\boundary_- C)).
\]

The only case in which $-\chi(\boundary_+ C) + \chi(\boundary_+ C) < 0$ is when $C$ is a 3-ball. Consequently,
\[
2 \geq 2z \geq \sum\limits_{C \cpt M\setminus \mc{H}} (-\chi(\boundary_+ C) + \chi(\boundary_- C)),
\]
where the sum is over all compressionbodies $C \cpt M\setminus\mc{H}$ which are not 3-balls. Since euler characteristic of a closed, orientable surface is always even, there is at most one compressionbody which is neither a product nor a solid torus nor a 3-ball. If $z = 0$, then every compressionbody must be a product or solid torus.

Suppose $z = 0$. Thus, if $W$ is an outermost chunk (i.e. a chunk with $|\boundary W| \leq 1$), one of the compressionbodies of $W \setminus \mc{H}$ is a solid torus and the other is a product or a solid torus. If both are solid tori, then $M$ is $S^3$ or a lens space and, since $\netextent(\mc{H}) = 1$, $K$ is has a (1,1)-decomposition. If one is a product, it must contain at least one bridge arc as we have previously noted. In which case, the solid torus also contains at least one bridge arc. We see, therefore, that $\delta(W) \geq 2$. Thus, $\delta(W) = 2$. Since $\boundary W \neq \nil$, there are at least two outermost chunks, $W$ and $W'$. We have $2 \geq \delta(W) + \delta(W') \geq 4$, a contradiction. 

Suppose, therefore, that $z = 1$. We will show that we again encounter a contradiction. Let $W$ be the outermost chunk containing the 3-ball $(C, T_C)$. Let $(D, T_D)$ be the other v.p.-compressionbody in $W$. We know that $\delta(W) = 2$. Since $\boundary W \neq  \nil$, there is another outermost chunk $W'$.  Let $(A, T_A)$, $(B, T_B)$ be the two v.p.-compressionbodies whose union is $W'$, with $\boundary_- A = \nil$. Since $\delta(W) = 2$, we must have $\delta(A, T_A) = \delta(B, T_B) = 0$. By Corollary \ref{delta zero}, $(A, T_A)$ must be (solid torus, core loop) or (solid torus, $\nil$). The former case can't happen since $K \cap W \neq \nil$. Both $B$ and $D$ have non-empty negative boundary. Thus, at most one of $B$ or $D$ is not a product compressionbody, which means that at least one of them is. From the claim, we know that $D$ is not a product compressionbody. But $B$  cannot be a product either, since $\delta(B, T_B) = 0$. (If it were, then $(B, T_B)$ would be a trivial product compressionbody, contradicting thinness.) Thus, this case cannot occur, either.

Finally, if we know that $\netextent_{-2}(M,K) = 1$, then we follow the same proof, but we may start with a locally thin $\mc{H}$ such that $\netchi(\mc{H}) = -2$ and $\netextent(\mc{H}) = 1$. All components of such an $\mc{H}$ must be spheres, which significantly simplifies the proof.
\end{proof}

For the statement of the next theorem, recall that each 2-bridge knot in $S^3$ has a (1,1)-decomposition.

\begin{theorem}\label{Morimoto gen}
Suppose that $K \subset S^3$ is a composite $(g,b)$ knot. Then the number of prime summands is at most $g + b - 1$. If the number of summands is exactly  $m = g+ b - 1$, then at least
\[ \frac{g}{2} + (b-1)\] of the summands have (1,1)-decompositions and at least $(b-1)$ of those are 2-bridge knots.
\end{theorem}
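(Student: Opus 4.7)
The plan is to apply the Additivity Theorem (Theorem~\ref{Main Theorem}) to a $(g,b)$-bridge surface of $K$ and then exploit the fact that $\netchi$ is automatically even in order to extract the sharper counting.

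Since $K$ is a $(g,b)$-knot in $S^3$, it has a genus $g$ bridge surface $H$ with $|H\cap K|=2b$, satisfying $\extent(H)=g+b-1$ and $\netchi(H)=2g-2$. Hence $\netextent_{2g-2}(S^3,K)\le g+b-1$. All summing spheres of a composite knot in $S^3$ are twice-punctured, so $p_3=0$ in Theorem~\ref{Main Theorem}. Applying that theorem with $x=2g-2$ produces a prime factorisation $K=K_1\#\cdots\#K_n$ and integers $x_1,\ldots,x_n$ with
\[
\sum_{i=1}^n x_i\le 2-2b,\qquad \netextent_{2g-2}(S^3,K)=\sum_{i=1}^n\netextent_{x_i}(S^3,K_i).
\]
Each $K_i$ is prime and non-trivial, so $\netextent_{x_i}(S^3,K_i)\ge 1$ by Theorem~\ref{Net Extent Detects unknot}. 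Summing gives $n\le g+b-1$, which is the first assertion.

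Now assume $n=g+b-1$; then each $\netextent_{x_i}(S^3,K_i)=1$. The key observation is that $\netchi$ of any multiple v.p.-bridge surface is even, since every closed orientable surface has even Euler characteristic. Replacing each $x_i$ by the largest even integer $y_i\le x_i$ therefore leaves $\netextent_{x_i}(S^3,K_i)$ unchanged. Writing $y_i=-2+2z_i$ with $z_i$ a non-negative integer, the bound $\sum y_i\le 2-2b$ becomes $\sum z_i\le g$. By Theorem~\ref{netext 1 class}, $z_i\le 1$ (i.e.\ $y_i\le 0$) forces $K_i$ to have a $(1,1)$-decomposition, and $z_i=0$ (i.e.\ $y_i=-2$) forces $K_i$ to be 2-bridge. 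Setting $A=|\{i:z_i=0\}|$ and $B=|\{i:z_i\le 1\}|$, so $A\le B\le n$, the inequality
\[
g\ge\sum_{i=1}^n z_i\ge (B-A)+2(n-B)=2n-A-B
\]
gives $A+B\ge 2n-g=g+2b-2$. Combined with $A\le B$ this yields $B\ge g/2+(b-1)$; combined with $B\le n=g+b-1$ it yields $A\ge b-1$, which are the two remaining conclusions.

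The only real subtlety is the parity observation: ignoring it would give only $g/3+(b-1)$ in place of $g/2+(b-1)$. Everything else is a direct application of Theorems~\ref{Main Theorem}, \ref{Net Extent Detects unknot} and \ref{netext 1 class}.
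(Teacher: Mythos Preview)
Your proof is correct and follows essentially the same route as the paper's: both apply Theorem~\ref{Main Theorem} at $x=2g-2$, use Theorem~\ref{Net Extent Detects unknot} to force each summand to contribute at least $1$ to net extent, exploit the evenness of $\netchi$, and then invoke Theorem~\ref{netext 1 class} together with the same counting (your $A$ and $B$ are the paper's $n_-$ and $n_-+n_0$). One minor slip: Theorem~\ref{Main Theorem} gives $\sum x_i\le x-2(n-1)=2g-2n$, which only becomes $2-2b$ after you have fixed $n=g+b-1$, so that inequality belongs in your second paragraph rather than your first; and for the record, the paper obtains the preliminary bound $n\le g+b-1$ by a detour through tunnel number and Scharlemann--Schultens, whereas your direct appeal to additivity and Theorem~\ref{Net Extent Detects unknot} is the cleaner argument.
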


\begin{proof}
Suppose that $K$ is $b$-bridge with respect to a genus $g$ Heegaard surface $H$ in $S^3$. Let $x = 2g - 2$ be the euler characteristic.  By tubing along all the bridges to one side of the bridge surface (i.e. meridional stabilization) we obtain a Heegaard surface for the exterior of $K$ having genus $g + b$. Thus, $t(K) \leq g+ b - 1$. By Theorem \ref{MSS generalization} (or the earlier Scharlemann-Schultens version) $K$ can have at most $g + b - 1$ summands. Suppose it has exactly this number of summands. Let $m = g + b - 1$.

The bridge surface $H$ for $K$ is also multiple v.p.-bridge surface for $K$ with $\netextent(H) = g + b - 1$. Thus, $\netextent_x(S^3, K) \leq g + b - 1$. We apply Theorems \ref{Main Theorem} and \ref{Net Extent Detects unknot} and the uniqueness of prime decomposition. These theorems produce integers $ x_1, \hdots, x_m$, each at least -2, such that
\begin{equation}\label{xsum}
 x_1 + \cdots + x_m \leq 2g - 2 -2(m-1) = -2(b-1) 
\end{equation}
and
\[
g+b-1 \geq \netextent_x(S^3, K) = \sum\limits_{i=1}^m \netextent_{x_i}(S^3, K_i) \geq m = g+b-1.
\]
Since each $K_i$ is non-trivial, $\netextent_{x_i}(S^3, K_i) = 1$ for every $i$.

Since the $x_i$ correspond to euler characteristics of closed surfaces, they are all even. Let $n_{-}$, $n_0$, and $n_+$ be the number of $i$ such that $x_i$ is $-2$, $0$, or positive, respectively. Then Inequality \eqref{xsum} produces:
\[
-2n_{-} + 2n_+ \leq -2(b-1).
\]
Hence,
\begin{equation}\label{xsum2}
n_+ \leq n_{-} - (b-1).
\end{equation}
Additionally, since there are $m = g + b - 1$ summands, we have
\[
n_{-} + n_0 + n_+ = g + b - 1.
\]
This can be rewritten as
\begin{equation}\label{summandsum}
\big(n_{-} - (b-1)\big) + n_0 + n_+ = g.
\end{equation}

Inequality \eqref{xsum2} tells us that $n_{-} \geq (b-1)$. Combining Inequalities \eqref{xsum2} and \eqref{summandsum} produces
\[
\begin{array}{rcl}
g &\leq & \big(n_{-} - (b-1)\big) + n_0 + \big(n_{-} - (b-1)\big) \\
&\leq& 2\big(n_{-} - (b-1)\big) + 2n_0.
\end{array}
\]
Thus,
\begin{equation}\label{n0ineq}
\frac{g}{2} \leq \big(n_{-} - (b-1)\big) + n_0.
\end{equation}

Suppose, first, that $b \geq 1$. Partition the set $\{K_i : x_i \leq 0\}$ into a set $\mc{A}$ of $(b-1)$ knots $K_i$ having $x_i = -2$ and a set $\mc{B}$ of the remaining knots $K_i$ with $x_i = - 2$ and the knots with $x_i = 0$. By Theorem \ref{netext 1 class}, each knot in $\mc{A}$ is a 2-bridge knot and each knot in $\mc{B}$ has a (1,1)-decomposition. The result follows from the fact that $|\mc{A} \cup \mc{B}| = |\mc{A}| + |\mc{B}| \geq (b-1) + \frac{g}{2}$.

Now suppose that $b = 0$. It is then trivially true that at least $b-1$ summands $K_i$ have $x_i = -2$. Let $\mc{A} = \nil$ and let $\mc{B}$ be the set of all $K_i$ with $x_i \in \{-2, 0\}$. By Inequality \eqref{n0ineq}, we have
\[
\frac{g}{2} + b - 1\leq  |\mc{B}|
\]
and the result follows as before.
\end{proof}

Applying the theorem with $m = 2$, produces the aforementioned results of Morimoto.

\begin{corollary}[{Morimoto \cite[Theorems 3, 4]{Morimoto15}}]\label{MorimotoCor}
Suppose that $K \subset S^3$ is a composite knot which is either 3-bridge with respect to a sphere or 2-bridge with respect to a Heegaard torus for $S^3$. Then, in the former case, $K$ is the connected sum of two 2-bridge knots and in the latter case it is a connected sum of a 2-bridge knot and a (1,1)-knot.
\end{corollary}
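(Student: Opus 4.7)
The plan is to derive both claims as direct numerical consequences of Theorem \ref{Morimoto gen}, since the Corollary is essentially a specialization of that theorem to two small values of $(g,b)$.

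The key observation is that in both cases $g + b - 1 = 2$: the first case is $(g,b) = (0,3)$ (3-bridge with respect to a sphere) and the second is $(g,b) = (1,2)$ (2-bridge with respect to a Heegaard torus for $S^3$). Because $K$ is composite it has at least two prime summands, and the first assertion of Theorem \ref{Morimoto gen} says there are at most $g + b - 1 = 2$. Hence $K$ has exactly $m = 2$ prime summands, which places us in the equality case where the second assertion of Theorem \ref{Morimoto gen} applies.

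Next I would plug $(g,b)$ into the two quantities $\frac{g}{2} + (b-1)$ and $(b-1)$ from the theorem. In Case 1, both are equal to $2$, so both summands have $(1,1)$-decompositions and both are $2$-bridge; this gives Morimoto's theorem for composite $(0,3)$-knots. In Case 2, we get $\frac{3}{2}$ and $1$ respectively; since the counts of summands with these properties must be nonnegative integers, the bound $\frac{3}{2}$ rounds up to $2$, so \emph{both} summands have $(1,1)$-decompositions and \emph{at least one} is a $2$-bridge knot. Thus $K$ is the connected sum of a $2$-bridge knot and a $(1,1)$-knot, as asserted.

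There is essentially no obstacle beyond bookkeeping: the whole content has already been absorbed into Theorem \ref{Morimoto gen}. The only subtle point worth highlighting in the write-up is the integrality argument in Case 2, where the non-integrality of $\frac{g}{2} + (b-1) = 3/2$ is precisely what allows us to conclude that \emph{both} summands (rather than just one) admit $(1,1)$-decompositions, recovering Morimoto's original statement sharply.
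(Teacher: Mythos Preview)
Your approach is the same as the paper's, and the argument is correct up to the last line. The one omission is in Case~2: from Theorem~\ref{Morimoto gen} you only learn that both summands \emph{have $(1,1)$-decompositions} and that at least one is $2$-bridge. That is not yet the statement of the corollary, because in this paper a ``$(1,1)$-knot'' is defined strictly as a knot admitting a $(1,1)$-decomposition but \emph{no} $(0,2)$- or $(1,0)$-decomposition. So you must still rule out the possibility that the second summand is itself $2$-bridge. The paper does this with one extra sentence: if both summands were $2$-bridge, then by Schubert's additivity of bridge number $K$ would admit a $(0,3)$-decomposition, placing it in the first case rather than the second. Add that observation and your proof matches the paper's exactly.
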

\begin{proof}
We apply Theorem \ref{Morimoto gen} with $(g,b)$ either $(0,3)$ or $(1,2)$. The quantity $g + b - 1 = 2$, so $K$ has at most two prime factors. At least $\frac{g}{2} + (b-1)$ of those factors have (1,1)-decompositions and $(b-1)$ of those are 2-bridge. If $g = 0$, then $b - 1 = 2$ and the result follows. If $g = 1$, then $g/2 + (b-1) = 3/2$. Since there are two summands, both must have (1,1)-decompositions and at least one must be a 2-bridge knot. If both were 2-bridge knots, then $K$ would be a $(0,3)$-knot by Schubert's theorem.
\end{proof}

\subsection*{Acknowledgements} The first author was supported by a grant from the Colby College Division of Natural Sciences. The second named author is grateful to the NSF for supporting this work through a CAREER grant. The authors also thank Ryan Blair for helpful conversations regarding Section \ref{comparison}.

\begin{bibdiv}
\begin{biblist}
\bibselect{AdditiveInvariantsBib}
\end{biblist}
\end{bibdiv}
\end{document}